\documentclass{amsart}

\title[The elementary theory of FnBTs]{The elementary theory of full $n$-branching ordinal trees with successor functions}

\author[Chen]{Junhong Chen}
 \address[Junhong Chen]
         {School of Mathematical Science, Fudan University, 220 Handan Road, Shanghai, 200433 China}
\email{21300180086@m.fudan.edu.cn}

\author[Zhang]{Yi Zhang}
 \address[Yi Zhang]
         {College of Intelligent Robotics and Advanced Manufacturing, Fudan University, 220 Handan Road, Shanghai, 200433 China}
 \email{pacisaury@gmail.com}

\usepackage{latexsym,amsfonts,amsmath,amssymb,mathrsfs,tikz-cd,adjustbox}
\usetikzlibrary{fit,decorations.pathmorphing,backgrounds}
\usepackage{changepage,calc}
\usepackage[utf8]{inputenc}
\usepackage{csquotes}
\usepackage{braket}
\usepackage{bbm}
\usepackage[hidelinks]{hyperref}
\usepackage{relsize}
\usepackage[cmyk,svgnames,dvipsnames]{xcolor}
\usepackage{tikz}
\usetikzlibrary {3d}
\usepackage{comment}
\usetikzlibrary{arrows,arrows.meta,petri,topaths,positioning,shapes,shapes.misc,patterns,calc,decorations.pathreplacing,hobby}
\pgfdeclarelayer{foreground}
\pgfdeclarelayer{background}
\pgfdeclarelayer{boardarrows}
\pgfdeclarelayer{boardgrid}
\pgfdeclarelayer{boardshades}
\pgfsetlayers{boardshades,bo
ardgrid,boardarrows,background,main,foreground}
\usepackage{wrapfig} 
\usepackage{caption} 
\usepackage{float}
\RequirePackage{doi}

\usepackage{enumitem}

\usepackage[backend=biber,style=alphabetic,maxbibnames=15,maxcitenames=6,dateabbrev=false,safeinputenc=true]{biblatex}

\renewcommand{\UrlFont}{} 
\renewbibmacro{in:}{\ifentrytype{article}{}{\printtext{\bibstring{in}\intitlepunct}}} 
\DeclareFieldFormat{url}{\UrlFont\url{#1}} 
\DeclareFieldFormat{urldate}{
  (version \thefield{urlday}\addspace%
  \mkbibmonth{\thefield{urlmonth}}\addspace%
  \thefield{urlyear}\isdot)}
\DeclareFieldFormat{eprint:arxiv}{
  \ifhyperref
    {\href{http://arxiv.org/abs/#1}{%
        arXiv\addcolon\nolinkurl{#1}}\iffieldundef{eprintclass}{}{\UrlFont{\mkbibbrackets{\thefield{eprintclass}}}}}
    {arXiv\addcolon\nolinkurl{#1}\iffieldundef{eprintclass}{}{\UrlFont{\mkbibbrackets{\thefield{eprintclass}}}}}}

\newcommand{\cf}{\operatorname{cf}}

\newtheorem{theorem}{Theorem}
\newtheorem{lemma}{Lemma}
\newtheorem{corollary}{Corollary}
\theoremstyle{definition}

\begin{document}

\begin{abstract}
    We investigate the first-order theories of full \(n\)-branching ordinal trees \(\mathfrak{T}_\alpha^n\). We obtain a complete classification of the standard trees up to elementary equivalence: every \(\mathfrak{T}_\alpha^n\) is elementarily equivalent to one of four canonical types determined by the ordinal \(\alpha\). For each canonical type we establish effective quantifier elimination and prove decidability of the theory. Along the way we develop the realizability theory of colored ordinal characters and obtain a sharp bound on the length of minimal witnesses. We also clarify the relationship between these trees and monadic second-order logic over ordinals, and show that the equal-height relation is not first-order definable in any standard tree.
\end{abstract}

\maketitle
\pagestyle{plain}

\tableofcontents

\newcommand\YZ[1]{{\color{red}(YZ: #1)}}
\newcommand\CJ[1]{{\color{green}(CJ: #1)}}

\section{Introduction}\label{sec:introduction}

Trees endowed with additional structure arise naturally in many parts of logic and theoretical computer science. A particularly well-behaved class is formed by the full $n$-branching ordinal trees: for a fixed finite $n\geq 2$ and a limit ordinal $\alpha$, consider the set $n^{<\alpha}$ of all sequences of length less than $\alpha$ with entries in $n=\{0,1,\dots,n-1\}$, ordered by initial segment or, equivalently, by set-theoretic inclusion. These sequences represent the nodes of a rooted tree in which every node has exactly $n$ immediate successors, labeled by the \emph{successor functions} $S_0,\dots,S_{n-1}$, and every maximal chain (path) is isomorphic to the ordinal $\alpha$. We denote the resulting first-order structure by
$$\mathfrak{T}_\alpha^n=(n^{<\alpha},\subseteq,S_0,\dots,S_{n-1}).$$

These structures will be abbreviated as FnBTs. The first-order theories of trees have been studied extensively (see, e.g., \cite{Kellerman2021,KellermanGoranko2021} and references therein), but the combination of full branching and a fixed ordinal height introduces a rich interplay between the combinatorial complexity of the tree, automata theory, and the model theory of linear orders that is far from being fully understood.

In this paper, we undertake a systematic investigation of the structures $\mathfrak{T}_\alpha^n$ and their elementary theories. Our work reveals a series of surprisingly clean classification and decidability results and delineates the precise expressive power of the first-order language of these trees.

\begin{theorem}
    Suppose that $\alpha$ is a nonzero limit ordinal. Exactly one of the following four possibilities holds:
    \begin{enumerate}
        \item If $\alpha\equiv\omega^\omega$, then $\mathfrak T_\alpha^n\equiv\mathfrak T_{\omega^\omega}^n$.
        \item If $\alpha=\theta<\omega^\omega$, then $\mathfrak T_\alpha^n=\mathfrak T_\theta^n$.
        \item If $\alpha=\alpha_0+\theta$, where $\alpha_0\equiv\omega^\omega$, $\cf(\alpha_0)=\omega$, and $0<\theta<\omega^\omega$, then $\mathfrak T_\alpha^n\equiv\mathfrak T_{\omega^\omega+\theta}^n$.
        \item If $\alpha=\alpha_0+\theta$, where $\alpha_0\equiv\omega^\omega$, $\cf(\alpha_0)>\omega$, and $0<\theta<\omega^\omega$, then $\mathfrak T_\alpha^n\equiv\mathfrak T_{\omega_1+\theta}^n$.
    \end{enumerate}
    In every case, $\operatorname{Th}(\mathfrak T_\alpha^n)$ is decidable and admits effective quantifier elimination in an appropriate expanded language. These four kinds of trees are pairwise elementarily inequivalent.
\end{theorem}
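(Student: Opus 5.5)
Our plan is to reduce everything to the first-order theory of the underlying ordinal $(\alpha,<)$, enriched with finitely many unary predicates coming from colorings, and then to use the classical analysis of ordinals modulo elementary equivalence (Mostowski--Tarski, Doner--Mostowski--Tarski). Here $\alpha\equiv\omega^\omega$ means that $\alpha=\omega^\omega\cdot\beta+\theta$ with $\beta\ge1$ and $\theta<\omega^\omega$; in that case $\alpha$ agrees with $\omega^\omega+\theta$ in the theory of ordinals, up to the first-order indistinguishable part. The key point is that a node of $n^{<\alpha}$ is a sequence of length $<\alpha$ with entries in $n$, i.e.\ an $n$-coloring of an initial segment of $\alpha$. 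A tuple of nodes $\bar x$ therefore determines a finite system of colored ordinals: their lengths, the meets $x_i\wedge x_j$ (definable as greatest common lower bounds), and the coloring of each branch between consecutive meets. Accordingly, we will define the \emph{colored ordinal character} of a segment as its theory in the language $(<,P_0,\dots,P_{n-1})$ up to quantifier rank $k$.

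\textbf{Step 1 (Ehrenfeucht--Fraïssé composition).} We will prove a Feferman--Vaught style lemma: the rank-$k$ type of $\bar x$ in $\mathfrak T^n_\alpha$ is determined by the meet-tree shape of $\bar x$, the rank-$k'$ colored characters of the segments between consecutive branching points, and the uncolored character of the remaining final segment $\alpha\setminus \mathrm{len}(x)$ above each leaf. We argue this with EF games, where Duplicator answers a new node $y$ by locating $y\wedge\bar x$ and copying segment by segment. Two facts make the copying legitimate: the successor functions act locally, and the full branching guarantees that any colored segment can be realized. Consequently, for $\alpha\equiv\alpha'$ with matching cofinality data we get $\mathfrak T^n_\alpha\equiv\mathfrak T^n_{\alpha'}$.

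\textbf{Step 2 (the four cases).} For case (1), every final segment $[\gamma,\alpha)$ with $\gamma<\alpha$ is order-isomorphic to a tail of the form $\omega^\omega\cdot\beta'$ plus a piece, and the relevant characters depend only on theories of ordinals. For cases (3) and (4), the nodes of length $\ge\alpha_0$ see a prefix of length $\alpha_0$ colored arbitrarily. Whether that prefix can be a limit of countably many definable stages is controlled by $\cf(\alpha_0)$. We will express this through the realizability of colored characters: in the case $\cf(\alpha_0)>\omega$, a club/stationarity phenomenon (for instance, every coloring of $\omega_1$ has a monochromatic-ish closed unbounded pattern) forces certain characters for branches through the limit level. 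These characters differ from the countable-cofinality case, which explains why $\omega^\omega+\theta$ and $\omega_1+\theta$ are the representatives. Case (2) is trivial. To show the four cases are pairwise inequivalent, we will exhibit sentences that separate them. Level $\omega^\omega$ is detected as follows: the height of a node $x$, read modulo "every $\omega^k$-limit", is definable via iterated "limit of limits" formulas using $\subseteq$ and the $S_i$, so $\theta<\omega^\omega$ is first-order visible. The cofinality distinction will come from a sentence about the existence of a limit node above which the predicates "$S_0$-successor on a cofinal set" behave in a forced way.

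\textbf{Step 3 (QE and decidability).} We will expand the language by predicates $\chi_\sigma(x)$, $\chi_\sigma(x,y)$ asserting that the colored segment $x$ or $[x,y]$ has rank-$k$ colored character $\sigma$, together with the meet function and the length-tail character. Step 1 then gives quantifier elimination, and it is effective provided we can decide which combinations of characters are realizable. Decidability of the colored characters of ordinals follows from Büchi's decidability of MSO over countable ordinals and $\omega_1$, plus a bound on minimal witness length, i.e.\ every realizable character is realized below $\omega^{\omega}\cdot m$ for computable $m$. We expect the main obstacle to be this realizability analysis: proving the sharp witness bound, and in particular treating the $\cf(\alpha_0)>\omega$ case correctly, since there the reduction to countable ordinals fails and one must use the $\omega_1$ analysis from Büchi's work.
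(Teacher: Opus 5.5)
The overall architecture is the paper's: meet closures whose edges carry characters of transition words, an EF back-and-forth that locates $y\wedge\bar x$ and splits edges, quantifier elimination in an expanded language, B\"uchi for decidability, the definable leftmost path plus Mostowski--Tarski to detect $\theta$ and Type~II, and a sentence about $C_0$-positions cofinal below a limit node to separate (3) from (4). Case (1), correctly read, also goes through along your lines, because every character is realized below $\omega^\omega$ and every tail of such an $\alpha$ absorbs lengths below $\omega^\omega$.

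The gap is in cases (3) and (4). Your Step~1 invariant records only the first-order character of the uncolored tail $[\operatorname{len}(x),\alpha)$, and that is blind to cofinality. As linear orders, $\omega^\omega+\theta$ and $\omega_1+\theta$ have the same theory. So if your lemma transferred between trees, applying it to the empty tuple would give $\mathfrak T^n_{\omega^\omega+\theta}\equiv\mathfrak T^n_{\omega_1+\theta}$, contradicting the separation you plan in Step~2. The phrase ``matching cofinality data'' is a hypothesis the lemma never uses. What Duplicator actually needs, when Spoiler plays $y$ with $\operatorname{ht}(y)\geq\alpha_0$ above some $x$ with $\operatorname{ht}(x)<\alpha_0$, is that the same $q$-characters are realizable on the segment from $x$ up to level $\alpha_0$ on both sides. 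That is, the set of $q$-characters realizable on an ordinal $\gamma\equiv\omega^\omega$ must depend only on whether $\cf(\gamma)=\omega$. This is needed even inside one tree, since tails of $\alpha_0$ need not be isomorphic: for $\alpha_0=\omega^{\omega_1}+\omega^\omega$ they are $\omega^{\omega_1}+\omega^\omega$ and $\omega^\omega$. The paper supplies this through two results, combined in Lemma~\ref{lem:absorb-large}:
cofinal stretching (Theorem~\ref{lem:cofinal-stretching}), which says equal cofinality suffices;
and exact stretching (Theorem~\ref{lem:exact-stretching}), which says all regular uncountable cardinals realize the same characters, proved via the profiles of Theorem~\ref{lem:finite-profile-scheme} and a club argument.

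Your plan has no substitute for this. B\"uchi's theorem decides MSO over $\omega_1$, but it does not show that an $\alpha_0$ of cofinality $\omega_2$, or any larger regular cofinality, realizes exactly the characters $\omega_1$ does, and that is what case (4) asserts. The remark about closed unbounded patterns points at the right tool but is not an argument. A bound on minimal witnesses answers the wrong question: what matters is realizability on the prescribed length $\alpha_0-\operatorname{ht}(x)$, not the shortest length realizing a character.

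Once the transfer is available, the paper packages it as follows. It adds $H_0,\dots,H_m$ and $\operatorname{SL}$, and closes meet closures under $\operatorname{SL}$ so that edges crossing the special layer are cut at height $\alpha_0$. It then axiomatizes the characters of $I(x,\operatorname{SL}(z))$ by realizability on $\omega^\omega$ or $\omega_1$ (Axiom~\ref{ax:3H-SL-realizability}). Finally it checks that the expansion of $\mathfrak T^n_\alpha$ satisfies the resulting complete theory, rather than running a direct tree-to-tree back-and-forth.

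One slip to fix: $\alpha\equiv\omega^\omega$ means $\alpha=\omega^\omega\cdot\beta$ with $\beta\geq1$ and no tail. An ordinal $\omega^\omega\cdot\beta+\theta$ with $0<\theta<\omega^\omega$ is $\equiv\omega^\omega+\theta$ and belongs to cases (3) and (4). Under your reading, case (1) would be false.
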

\begin{proof}
  These assertions follow from the results below.

  For the decidability assertions, we assume effective notations for the relevant exponents and cofinalities.

  For Type~(1): Theorems~\ref{thm:1-qe} and~\ref{thm:1-classify}.

  For Type~(2): Theorems~\ref{thm:2-qe} and~\ref{thm:2-classify}.

  For Types~(3) and~(4): Theorems~\ref{thm:3-qe} and~\ref{thm:3-classify},

  Pairwise elementary inequivalence is Theorem~\ref{theorem:inequivalent}.

\end{proof}

The paper is organized as follows. Section~\ref{sec:preliminaries} collects the necessary background on trees, colored ordinals, Ehrenfeucht–Fra\"{\i}ss\'e games, and monadic second-order logic. In Section~\ref{sec:EFbound} we refine the Ehrenfeucht–Fra\"{\i}ss\'e bounds for finitely colored ordinals, establishing both upper and lower bounds of tower-exponential growth and giving an exact computation algorithm. Section~\ref{sec:realizability} solves the realizability problem for FCO characters by proving cofinal and exact stretching theorems, and deduces that realizability is decidable for every ordinal presented in Cantor normal form. Sections~\ref{sec:type1syn}–\ref{sec:type34syn} contain the core of the tree analysis: we introduce four canonical types of full \(n\)-branching trees, axiomatize their first-order theories in appropriate diagram languages, prove quantifier elimination and decidability, and characterize exactly which standard trees are elementarily equivalent to each canonical tree. From these results we extract the common first-order theory of all standard trees. Finally, the last section discusses the relationship between FnBTs and monadic second-order structures on ordinals, establishing mutual interpretability results and proving that the equal-height relation is not first-order definable in any standard tree.

\section{Preliminaries}\label{sec:preliminaries}

We collect in this section the necessary definitions and known facts from the model theory of trees, Ehrenfeucht–Fra\"{\i}ss\'e games, colored linear orders, monadic second order theory of ordinals, and the related automata theory.

Let us start by reviewing some standard notions in logic.\label{subsec:logicprelim}

Fix a finite relational language and a structure $\mathfrak{A}$ in this language. For any finite tuple $\bar a$ in $\mathfrak A$, we define the general $q$-characteristic $[[(\mathfrak{A},\bar{a})]]^q$ recursively as follows:
\begin{enumerate}
    \item $[[(\mathfrak{A},\bar{a})]]^0$ is the conjunction of the atomic and negated atomic formulae satisfied by $\bar a$.
    \item $[[(\mathfrak{A},\bar{a})]]^{q+1}$ records all $q$-characteristics realized by one-point extensions $(\bar a,a_+)$: it contains one formula $\exists x_+[[(\mathfrak{A},\bar{a},a_+)]]^q$ for each realized characteristic and the formula
    $$\forall x_+\bigvee_{a_+}[[(\mathfrak{A},\bar{a},a_+)]]^q,$$
    where the disjunction ranges over representatives of the finitely many realized $q$-characteristics.
\end{enumerate}
There are finitely many such formulae up to logical equivalence for each fixed $q$ and tuple length. Notice that \emph{characteristics} here refer to syntactical objects, and definitions in the following sections of \emph{characters} are prepared as semantical objects.

The following lemma shows that characteristics are useful for concluding elementary equivalences.
\begin{lemma}\label{lem:hintikka}
    $(\mathfrak{A},\bar{a})\equiv_{q}(\mathfrak{B},\bar{b})$ if and only if $[[(\mathfrak{A},\bar{a})]]^q$ and $[[(\mathfrak{B},\bar b)]]^q$ are logically equivalent.
\end{lemma}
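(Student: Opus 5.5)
We argue by induction on $q$, proving simultaneously for all tuple lengths $k$ that $(\mathfrak{A},\bar a)\equiv_q(\mathfrak{B},\bar b)$ if and only if $[[(\mathfrak{A},\bar a)]]^q$ and $[[(\mathfrak{B},\bar b)]]^q$ are logically equivalent. Here $\equiv_q$ means agreement on all formulas of quantifier rank at most $q$ in the free variables $\bar x$. Before the induction we record two auxiliary facts, proved alongside it. First, $[[(\mathfrak{A},\bar a)]]^q$ is a formula of quantifier rank $q$, and $(\mathfrak{A},\bar a)$ satisfies it. Second, for fixed $q$ and $k$ there are only finitely many characteristics up to logical equivalence. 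Both are immediate by induction: the relational language is finite, so at level $0$ there are finitely many atomic diagrams; at level $q+1$ a characteristic is determined by a subset of the finitely many level-$q$ characteristics in $k+1$ variables. This finiteness is what makes the disjunction in clause (2) a genuine finite formula, and we will choose one fixed representative from each equivalence class.

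For the forward direction, suppose $(\mathfrak{A},\bar a)\equiv_q(\mathfrak{B},\bar b)$. Since $[[(\mathfrak{A},\bar a)]]^q$ has rank $q$ and holds in $(\mathfrak{A},\bar a)$, it also holds in $(\mathfrak{B},\bar b)$. The key claim is that any structure satisfying a characteristic $\chi$ in fact has characteristic equivalent to $\chi$; we prove this by induction on $q$. At $q=0$ it holds because a complete atomic diagram is determined by any tuple satisfying it. At $q+1$, let $\chi=[[(\mathfrak{A},\bar a)]]^{q+1}$ and suppose $(\mathfrak{B},\bar b)\models\chi$. The existential conjuncts give, for each level-$q$ characteristic realized over $\bar a$, a witness $b_+$ satisfying it. By the induction hypothesis, $[[(\mathfrak{B},\bar b,b_+)]]^q$ is equivalent to that characteristic. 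The universal conjunct says that every $b_+$ satisfies one of the listed level-$q$ characteristics, so by induction again its own characteristic is one of them. Distinct characteristics are mutually exclusive, so the sets of realized level-$q$ characteristics over $\bar a$ and over $\bar b$ coincide. Since level-$(q+1)$ characteristics are built uniformly from these sets, the two formulas are logically equivalent.

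For the converse, suppose the characteristics are equivalent. We show by induction on formulas that every $\varphi(\bar x)$ of rank at most $q$ has the same truth value in both structures. Atomic formulas agree because of the level-$0$ part: the level-$q$ characteristic determines the level-$0$ one, by an easy induction. Boolean combinations are immediate. For $\varphi=\exists y\,\psi$ with $\psi$ of rank $q-1$, take a witness $a_+$ in $\mathfrak A$. Its level-$(q-1)$ characteristic appears as an existential conjunct of $[[(\mathfrak{A},\bar a)]]^q$, so the same conjunct appears in $[[(\mathfrak{B},\bar b)]]^q$, giving some $b_+$ with $[[(\mathfrak{B},\bar b,b_+)]]^{q-1}$ equivalent to $[[(\mathfrak{A},\bar a,a_+)]]^{q-1}$. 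By the induction hypothesis for $q-1$ (now with $k+1$ variables), $\psi$ transfers. The reverse direction is symmetric.

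The only delicate point is the bookkeeping that keeps the induction well founded: "realized characteristic" must be identified up to logical equivalence by fixing representatives, and mutual exclusivity of distinct characteristics must be established within the same induction.
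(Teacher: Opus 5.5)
The paper states this lemma without proof, as the standard Hintikka lemma; the text right after it links $\equiv_q$ to the $q$-round Ehrenfeucht--Fra\"{\i}ss\'e game. Your argument is the standard syntactic proof, and it is correct.

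The usual alternative goes through games. It shows that equivalence of characteristics is the same as Duplicator winning the $q$-round game, and then invokes the game theorem. That route gives at once the game characterization the paper relies on in its composition lemmas. Yours is self-contained and never mentions games.

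Your key claim says that any $(\mathfrak B,\bar b)$ satisfying $[[(\mathfrak A,\bar a)]]^q$ has an equivalent characteristic. This is the form in which the paper actually uses the lemma later. For example, in Theorem~\ref{lem:exact-stretching}, $\mathfrak B\models[[(\mathfrak A)]]^q$ is turned directly into $\mathfrak A\equiv_q\mathfrak B$.

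Two phrases should be tightened.

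In the existential step of the converse, logical equivalence does not literally make a conjunct ``appear'' in $[[(\mathfrak B,\bar b)]]^q$. Say instead:
\begin{itemize}
\item[1.] $(\mathfrak B,\bar b)$ satisfies its own characteristic, hence satisfies $[[(\mathfrak A,\bar a)]]^q$;
\item[2.] it therefore satisfies $\exists x_+[[(\mathfrak A,\bar a,a_+)]]^{q-1}$, which yields a witness $b_+$;
\item[3.] apply your key claim at rank $q-1$ to $(\mathfrak B,\bar b,b_+)$.
\end{itemize}

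In the paper's definition, $[[\cdot]]^{q+1}$ has no explicit atomic conjunct. So ``the level-$q$ characteristic determines the level-$0$ one'' passes through an existential conjunct, and that needs a realized one-point extension, i.e.\ a nonempty domain. This is automatic when $k\geq1$. When $k=0$, a relational language has no atomic formulas to check. So the empty domains the paper allows cause no trouble.

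Finally, mutual exclusivity of inequivalent characteristics is an immediate corollary of your key claim. It needs no separate bookkeeping inside the induction.
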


Here, the quantifier rank of a formula is the maximal nesting depth of its quantifiers, and $\equiv_q$ means agreement on all formulae of quantifier rank at most $q$. We follow the standard definition of the Ehrenfeucht--Fra\"{\i}ss\'e game: in every round, Spoiler($\forall$) chooses an element in one structure, and Duplicator($\exists$) chooses an element in the other. After $q$ rounds, Duplicator wins when the resulting tuples satisfy the same quantifier-free formulae. Two structures are $q$-elementarily equivalent exactly when Duplicator has a winning strategy in the $q$-round game. We will denote these games as EF games.

Then, there are some standard notions in the theory of ordinals and trees.\label{subsec:babyprelim}

The first‑order theory of well‑orderings was completely analysed by Mostowski and Tarski~\cite{Doner1978}. For our purposes the following consequences are sufficient: 

\begin{theorem}\label{theorem:DMT}
    In the language $\mathcal{L}=\{<, \phi_{\theta}\}$. Two ordinals are elementarily equivalent as linear orders if and only if they are equal modulo $\omega^\omega$. More precisely, $\alpha\equiv\beta$ holds exactly when either $\alpha=\beta<\omega^\omega$, or both $\alpha,\beta\geq\omega^\omega$ and they have the same Cantor normal form below $\omega^\omega$. Also, every $\theta<\omega^\omega$ is definable using a first-order formula $\phi_\theta(x)$ in the language of ordinals.
\end{theorem}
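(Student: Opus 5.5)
Let $\alpha=\omega^\omega\cdot\gamma+\rho$ and $\beta=\omega^\omega\cdot\delta+\sigma$ with $\rho,\sigma<\omega^\omega$ (division with remainder). The plan is to reduce the statement to these remainders and prove the rest with Ehrenfeucht--Fra\"{\i}ss\'e games, using Lemma~\ref{lem:hintikka} to pass between game equivalence and elementary equivalence.

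\textbf{Case $\gamma=\delta=0$.} If $\alpha,\beta<\omega^\omega$ and $\alpha\ne\beta$, say $\alpha<\beta$, then $\alpha$ has a Cantor normal form $\omega^{k_1}m_1+\dots+\omega^{k_r}m_r$. Write down a sentence $\psi_\alpha$, by induction on this normal form, saying: there is an element $x$ (namely the element $\alpha$ of $\beta$) whose initial segment has order type $\alpha$. This sentence is false in $\alpha$ itself, so $\alpha\not\equiv\beta$. The formulas $\phi_\theta(x)$ asserting ``the predecessors of $x$ form type $\theta$'' are built by the same induction, using relativized quantifiers:
\begin{itemize}
\item ``limit'' means having no immediate predecessor;
\item ``$\omega^{k+1}$-limit'' means the point is a limit of $\omega^k$-limits, i.e. the $\omega^k$-limit points below it are unbounded below it and have no largest element;
\item concatenation is handled by relativizing to intervals.
\end{itemize}

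\textbf{Case $\alpha\ge\omega^\omega$, $\beta<\omega^\omega$.} Then $\alpha$ contains a point of type $\beta$, which $\beta$ does not. Hence $\alpha\not\equiv\beta$, and the same holds for any differing remainders, since $\alpha$ is determined by its $\omega^\omega$-tail.

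\textbf{Case $\gamma,\delta\ge 1$: reduction to the remainder.} The remainder is definable from the top. Each point of $\alpha$ is either in the final segment of type $\rho$, or has a definable ``$\omega^k$-distance'' structure above it. The final segment of type $\rho$ is characterized by saying that above the point the order has type exactly $\rho$. That statement is first-order for fixed $\rho$, via $\phi$ relativized to final segments. This gives the following:
\begin{itemize}
\item If $\rho\ne\sigma$, some sentence ``the order ends with a tail of type $\rho$ preceded by a point which is an $\omega^k$-limit for all relevant $k$'' distinguishes them. Concretely, use the sentence saying there is a point $x$ that is an $\omega^{N}$-limit (for $N$ exceeding the exponents of $\rho$ and $\sigma$) with tail above $x$ of type exactly $\rho$.
\item Conversely, if $\rho=\sigma$, then $\alpha\equiv\beta$ follows from two facts: $\omega^\omega\gamma\equiv\omega^\omega\delta$, and $\equiv_q$ is preserved under ordered sums, which is a standard composition argument by EF games (Duplicator plays componentwise).
\end{itemize}

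\textbf{The main step.} The hard part is showing $\omega^\omega\cdot\gamma\equiv_q\omega^\omega\cdot\delta$ for all $\gamma,\delta\ge1$. I would prove the classical lemma $\omega^{k}\cdot m\equiv_q\omega^{k}\cdot m'$ whenever $m,m'\ge 2^q$ and $k\ge 2^q$ (or a similar threshold), by an EF strategy. Duplicator maintains that corresponding gaps between chosen points are $\equiv_{q-i}$ after round $i$. The base fact is that $\omega^{k}\equiv_q\omega^{k'}$ for $k,k'\ge 2^q$. Then $\omega^\omega\gamma$ decomposes as $\omega^{2^q}+\omega^\omega\gamma$ with the second summand absorbed, giving $\omega^\omega\gamma\equiv_q\omega^{2^q}$ for every $\gamma\ge1$. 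Getting the gap-splitting bookkeeping right, i.e. that splitting an $\omega^k$ at a point yields an initial piece plus an $\omega^k$, is the expected obstacle. It is routine but tedious, and I would handle it by induction on $q$ using the additive composition lemma.
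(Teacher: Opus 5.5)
The paper gives no proof of this statement. It quotes the Doner--Mostowski--Tarski analysis, so your outline has to stand on its own. Your separating sentences and the reduction of the case $\rho=\sigma$ to $\omega^\omega\gamma\equiv_q\omega^\omega\delta$ via the sum lemma are fine. For the $\rho\neq\sigma$ sentence, take $N$ above the exponents of $\sigma$. Then a nonzero multiple of $\omega^N$ in $\omega^\omega\delta+\sigma$ either lies below $\omega^\omega\delta$, leaving a final segment of type $\geq\omega^\omega$, or equals $\omega^\omega\delta$, leaving $\sigma$.

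\textbf{The gap is the main step.} Your lemma and base fact concern $\omega^k\cdot m$ with $k,m$ finite, and none of these orders is $\omega^\omega\gamma$. The bridge is the ``absorption'' of $\omega^\omega\gamma$ into $\omega^{2^q}$, which you assert without argument, and it is circular. The identity $\omega^{2^q}+\omega^\omega\gamma=\omega^\omega\gamma$ lets you use the sum lemma only if you already know the $q$-type of the summand $\omega^\omega\gamma$, which is what you are trying to prove. Concretely, consider the game on $\omega^\omega\gamma$ (with $\gamma\geq2$) versus $\omega^{2^q}$, and let Spoiler open with $x=\omega^\omega+\omega^2$. Duplicator then needs some $y<\omega^{2^q}$ whose initial segment is $(q-1)$-equivalent to $\omega^\omega+\omega^2$. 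A hypothesis about finite exponents and finite multiples says nothing about this. If you meant $\omega^k\equiv_q\omega^{k'}$ also for transfinite $k$, then $k=\omega$ is the main step itself. Your splitting remark controls only the final piece $[x,\omega^k)$, not the initial piece, which can be any ordinal below $\omega^k$.

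What is missing is an induction hypothesis that ranges over all ordinals: \emph{for every $q$, every $K\geq q$ and every ordinal $\eta\geq1$, $\omega^K\cdot\eta\equiv_q\omega^K$.}
\begin{itemize}
\item Prove it by induction on $q$ using the one-round criterion: $A\equiv_q B$ iff every $x\in A$ has a $y\in B$ with $A_{<x}\equiv_{q-1}B_{<y}$ and $A_{>x}\equiv_{q-1}B_{>y}$, and symmetrically. The case $q=0$ is trivial.
\item Write $x\in\omega^K\eta$ as $\omega^K\zeta+\nu$ with $\nu<\omega^K$. Since $\nu+1+\omega^K=\omega^K$, the points above $x$ form $\omega^K\eta'$ for some $\eta'\geq1$.
\item Answer $y=\nu$ if $\zeta=0$, and $y=\omega^{K-1}+\nu$ otherwise. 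In both cases $y<\omega^K$ and the points above $y$ form $\omega^K$, so the final pieces match by the hypothesis at rank $q-1$.
\item When $\zeta\geq1$, the hypothesis at exponent $K-1\geq q-1$ gives $\omega^K\zeta=\omega^{K-1}\cdot(\omega\zeta)\equiv_{q-1}\omega^{K-1}$. The sum lemma then gives $\omega^K\zeta+\nu\equiv_{q-1}\omega^{K-1}+\nu$.
\item Answer a move $y<\omega^K$ by $x=y$. The initial pieces coincide, and the final pieces are $\omega^K\eta$ and $\omega^K$.
\end{itemize}
Since $\omega^\omega\gamma=\omega^q\cdot(\omega^\omega\gamma)$, the lemma gives $\omega^\omega\gamma\equiv_q\omega^q$ for every $\gamma\geq1$. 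That is your main step, with threshold $q$ in place of $2^q$.
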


A \emph{$d$-colored ordinal} is a structure $\mathfrak{A}=(A,<,C_0,\dots,C_{d-1})$ where $(A,<)$ is an ordinal, $d\geq 2$ is a natural number, and the unary predicates $C_i$ form a partition of $A$ (each element satisfies exactly one $C_i$). When the number of colors is clear in context, we may briefly speak of \emph{finitely colored ordinals} (FCOs).

For a $d$-colored ordinal $\mathfrak{A}$ and $a\in A$, let $i$ be the unique color with $\mathfrak{A}\models C_i(a)$. The \emph{FCO $q$-character} of $a$ is the triple
$$\rho_q^{\mathfrak{A}}(a)=\bigl(i,\,[[\mathfrak{A}_{<a}]]^{q},\,[[\mathfrak{A}_{>a}]]^{q}\bigr),$$
where $[[\,\cdot\,]]^q$ denotes the $q$-characteristic sentence introduced as before. The \emph{FCO $q$-character} of $\mathfrak{A}$ is the set
$$\rho_q(\mathfrak{A})=\{\,\rho_q^{\mathfrak{A}}(a):a\in A\,\}.$$

We shall use the following standard facts about FCOs. They generally hold for all finitely colored linear orders, but we will not use any of these structures here.
\begin{lemma}[Character and composition facts]\label{lem:colored-facts}
Let $\mathfrak A,\mathfrak B$ be FCOs.
    \begin{enumerate}
        \item\label{itm:colored-character} $\mathfrak{A}\equiv_{q+1}\mathfrak{B}$ if and only if $\rho_q(\mathfrak A)=\rho_q(\mathfrak B)$.
        \item\label{itm:colored-sum} If $\mathfrak A_i\equiv_q\mathfrak B_i$ for every $i$ in a linear order $I$, then
        $$\sum_{i\in I}\mathfrak A_i\equiv_q\sum_{i\in I}\mathfrak B_i.$$
        \item\label{itm:colored-splitting} Suppose $q\geq 1$, $\mathfrak A\equiv_{q+1}\mathfrak B$, and $\mathfrak A=\mathfrak A_0+\mathfrak A_1$, where both summands are nonempty and $\mathfrak A_1$ has a least point $a$. Then $\mathfrak B=\mathfrak B_0+\mathfrak B_1$ for nonempty orders $\mathfrak B_0,\mathfrak B_1$ satisfying $\mathfrak A_i\equiv_q\mathfrak B_i$, with $\mathfrak{B}_1$ having a least point $b$.
    \end{enumerate}
\end{lemma}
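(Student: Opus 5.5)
We prove the three items of Lemma~\ref{lem:colored-facts} by Ehrenfeucht--Fra\"{\i}ss\'e games, using Lemma~\ref{lem:hintikka} to move between $\equiv_q$ and the characteristics $[[\cdot]]^q$.

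\textbf{Item (\ref{itm:colored-character}).} For a single point $a$, the quantifier-free type of $a$ in the language $\{<,C_0,\dots,C_{d-1}\}$ is just its color. Suppose $\mathfrak A\equiv_{q+1}\mathfrak B$ and let $a\in A$. Let Spoiler open the $(q+1)$-round game with $a$, and let Duplicator answer with $b$ according to her winning strategy. Then $b$ has the color of $a$. Moreover, the remaining $q$ rounds restricted to points below (respectively above) the chosen pair give Duplicator winning strategies for $\mathfrak A_{<a}$ versus $\mathfrak B_{<b}$ and for $\mathfrak A_{>a}$ versus $\mathfrak B_{>b}$. This works because any move of Spoiler in one part must be answered in the matching part, since order with the pinned points is preserved. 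Hence $\rho_q^{\mathfrak A}(a)=\rho_q^{\mathfrak B}(b)$, and by symmetry the two character sets coincide.

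Conversely, assume $\rho_q(\mathfrak A)=\rho_q(\mathfrak B)$. When Spoiler plays $a$, Duplicator picks some $b$ with the same character. By Lemma~\ref{lem:hintikka}, equal characteristics give $\mathfrak A_{<a}\equiv_q\mathfrak B_{<b}$ and $\mathfrak A_{>a}\equiv_q\mathfrak B_{>b}$. Duplicator then plays the remaining $q$ rounds by combining these two game strategies, answering below $b$ or above $b$ according to where Spoiler moves. The pinned pair $(a,b)$ keeps the order and the colors consistent. (The case $q=0$ also needs the emptiness of the two sides to agree, which the characteristics record.)

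\textbf{Item (\ref{itm:colored-sum}).} This is the standard composition argument. Duplicator keeps, for each $i\in I$, a winning $q$-round strategy $\sigma_i$ for $\mathfrak A_i$ versus $\mathfrak B_i$. A move in summand $i$ is answered in summand $i$ of the other sum using $\sigma_i$, fed only with the moves already made in that summand. Take two chosen points. If they lie in different summands, their order is determined by $I$ and is the same on both sides. If they lie in the same summand, their order is preserved by $\sigma_i$. Colors are always preserved. Each $\sigma_i$ is consulted at most $q$ times, so it always has rounds left.

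\textbf{Item (\ref{itm:colored-splitting}).} This is the step needing care: we must produce $b$ so that both the initial segment and the final segment match. Apply item (\ref{itm:colored-character}) with $q$ in place of $q+1$ shifted appropriately. Since $\mathfrak A\equiv_{q+1}\mathfrak B$, we have $\rho_q(\mathfrak A)=\rho_q(\mathfrak B)$, so there is $b\in B$ with $\rho_q^{\mathfrak B}(b)=\rho_q^{\mathfrak A}(a)$.

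Put $\mathfrak B_0=\mathfrak B_{<b}$ and $\mathfrak B_1=\mathfrak B_{\ge b}$. Then $\mathfrak B_1$ has least point $b$, and $\mathfrak B_0\equiv_q\mathfrak A_{<a}=\mathfrak A_0$ by Lemma~\ref{lem:hintikka}.

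Next, $\mathfrak A_1$ is the one-point order $\{a\}$ followed by $\mathfrak A_{>a}$, and $\mathfrak B_1$ is $\{b\}$ followed by $\mathfrak B_{>b}$. The singletons have the same color, and $\mathfrak A_{>a}\equiv_q\mathfrak B_{>b}$. Item (\ref{itm:colored-sum}), with a two-element index order, gives $\mathfrak A_1\equiv_q\mathfrak B_1$.

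Nonemptiness of $\mathfrak B_0$ remains to be checked. Because $q\ge1$, the characteristic $[[\mathfrak A_0]]^q$ contains an existential formula $\exists x_+(\cdots)$, since $\mathfrak A_0$ is nonempty. The equivalent structure $\mathfrak B_0$ must satisfy it, so $\mathfrak B_0$ is nonempty. This is exactly where the hypothesis $q\ge1$ is used.
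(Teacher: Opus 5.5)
Your proposal is correct and follows the paper's route: the paper cites items (1) and (2) from Mwesigye--Truss, where you supply the standard EF-game proofs, and for (3) your $b$, chosen via item (1) to have the same $q$-character as $a$, is exactly Duplicator's reply to $a$ in the paper's one-line game argument; your use of item (2) on $\{a\}+\mathfrak A_{>a}$ and of $q\geq1$ for the nonemptiness of $\mathfrak B_0$ makes explicit what the paper leaves implicit. The only blemish is the parenthetical on $q=0$ in item (1): no rounds remain after the first move, so nothing about emptiness is needed there, and rank-$0$ characteristics do not actually record emptiness, but this does not affect the argument.
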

\begin{proof}
    Items~(\ref{itm:colored-character}) and~(\ref{itm:colored-sum}) are the character criterion and the sum theorem from \cite{Mwesigye2018}. For item~(\ref{itm:colored-splitting}), the winning strategy for $\exists$ in the $q+1$-round EF game choose the correct splitting point $b$ once $\forall$ choose $a$ at the beginning.
\end{proof}

A \emph{tree} is a partially ordered set $(T,<)$ with a least element $\epsilon$ (the \emph{root}) such that for every $x\in T$ the set $T_{<x}=\{y\in T:y<x\}$ is well‑ordered by $<$, and any two elements have a common lower bound. To make everything clear, we take the further requirement that every pair $x,y$ admits a greatest common lower bound, denoted by $x\wedge y$.

A maximal totally ordered subset of $T$ is called a \emph{path}. The tree is an \emph{$\alpha$-tree} if every path is order‑isomorphic to the ordinal $\alpha$. A subset $J\subseteq T$ is \emph{convex} if $a,b\in J$ and $a<c<b$ imply $c\in J$. An element $y$ is an \emph{immediate successor} of $x$ if $x<y$ and there is no $z$ with $x<z<y$; similar definition works for \emph{immediate predecessor}. The tree is \emph{full $n$-branching} if every non‑maximal node has exactly $n$ immediate successors.

Full $n$-branching $\alpha$-trees have an especially simple standard form, as we wish. The trees $\mathfrak{T}_\alpha^n$ will be called \emph{standard trees} throughout this paper.
\begin{theorem}[Standard trees]\label{thm:standard-tree}
    Every full $n$-branching $\alpha$-tree is isomorphic to our $(n^{<\alpha},\subseteq)$.
\end{theorem}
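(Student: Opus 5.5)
We construct the isomorphism by transfinite recursion on height. Let $(T,<)$ be a full $n$-branching $\alpha$-tree with root $\epsilon$. For each $x\in T$ the set $T_{<x}$ is well-ordered; write $h(x)$ for its order type, the height of $x$. Since every path is isomorphic to $\alpha$ and every chain extends to a path (Zorn's lemma), we get $h(x)<\alpha$. We also get that for every $\beta<\alpha$ and every $x$ with $h(x)<\beta$ there is some $y\geq x$ with $h(y)=\beta$.

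For every non-maximal $x$, fix once and for all a bijection between the $n$ immediate successors of $x$ and $\{0,\dots,n-1\}$. We then define $f\colon T\to n^{<\alpha}$ by
$$f(x)(\gamma)=i \iff \text{the element of } T_{\le x} \text{ of height } \gamma+1 \text{ is the } i\text{-th successor of the element of height } \gamma,$$
for $\gamma+1\le h(x)$. We still have to fix the length of $f(x)$ so that it equals $h(x)$. The definition uses that $T_{\le x}$ is a well-ordered chain, so it has a unique element of each height $\le h(x)$. It also uses that the element of height $\gamma+1$ is an immediate successor of the element of height $\gamma$.

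Next we check that $f$ is order-preserving: if $x\le y$ then $T_{\le x}$ is an initial segment of $T_{\le y}$, so $f(x)\subseteq f(y)$. For injectivity and order reflection, take $x\neq y$ and form the meet $z=x\wedge y$. If neither $x\le y$ nor $y\le x$, then $z<x$ and $z<y$, and the elements of $T_{\le x}$ and $T_{\le y}$ of height $h(z)+1$ are distinct immediate successors of $z$. Hence $f(x)$ and $f(y)$ differ at coordinate $h(z)$, so they are incomparable. This gives $x\le y\iff f(x)\subseteq f(y)$.

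The main step, and the expected obstacle, is surjectivity, especially at limit levels. Here we need that a branch through $T$ below a limit height actually has a node on top of it. Given $s\in n^{<\alpha}$, we build by recursion on $\gamma\le\operatorname{dom}(s)$ a chain $(x_\gamma)$ with $h(x_\gamma)=\gamma$ and $f(x_\gamma)=s\restriction\gamma$.

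The successor case uses fullness: $x_\gamma$ is non-maximal because $\gamma<\operatorname{dom}(s)<\alpha$ and paths have type $\alpha$. So we may take its $s(\gamma)$-th successor.

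The limit case $\lambda\le\operatorname{dom}(s)<\alpha$ is delicate. The chain $C=\{x_\gamma:\gamma<\lambda\}$ extends to a path $P\cong\alpha$, and $P$ has an element $p$ of height $\lambda$. Every element of $P$ of height $\ge\lambda$ lies above all of $C$, since $P$ is a chain containing $C$ and its elements of height $<\lambda$ are exactly $C$. We therefore set $x_\lambda=p$.

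The subtle point is uniqueness. This is where the paper's requirement that meets exist matters: two distinct nodes of height $\lambda$ above the same chain $C$ would have a meet of height $\ge\lambda$, which is impossible. Uniqueness is not strictly needed for surjectivity, but it makes $f$ well-defined as a bijection and rules out non-Hausdorff branching at limits.

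Finally, $f$ is an order isomorphism onto $(n^{<\alpha},\subseteq)$. Immediate successors are preserved, so the labelling by successor functions transfers as well.
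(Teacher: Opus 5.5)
Your proposal is correct and follows the same route as the paper: label the immediate successors of each node, send a node $x$ to its sequence of labels along $T_{\le x}$, and use the existence of meets to get injectivity and order reflection, including at limit heights. Your version fills in what the paper leaves to ``transfinite induction''. In particular, you show that each branch of limit length $\lambda<\alpha$ has a node above it by extending the chain to a path of type $\alpha$. Your separate ``uniqueness'' remark is just the injectivity you had already proved, so it adds nothing new but does no harm.
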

\begin{proof}
    Label the $n$ immediate successors of each non‑maximal node by $0,\dots,n-1$. For a node $x$ of height $\beta<\alpha$, let $f(x)\in n^{\beta}$ be the sequence of labels along the path from the root to $x$. Transfinite induction on $\beta$ shows that $f$ maps level $\beta$ bijectively onto $n^{\beta}$, and $x<y$ holds exactly when $f(x)$ is a proper initial segment of $f(y)$. Notice that uniqueness of $f(x)$ when $x$ is a limit height comes from the fact that every two nodes admits a greatest common lower bound. Hence, $f$ is the required isomorphism.
\end{proof}

Finitely colored ordinals play an important role in the analysis of FnBTs. We abbreviate it as FCOs. Given $\mathfrak T_\alpha^n$ and a node $s\colon\beta\to n$, we may regard $s$ as the $n$-colored ordinal $(\beta,C_0,\dots,C_{n-1})$, where $\gamma\in C_i$ if and only if $s(\gamma)=i$.

At last, let us take a glance at the sledgehammer-like theorems in the monadic second order theory of ordinals. Monadic Second order theory, abbreviated as MSO, allows second order quantification over unary predicates. There are two main approaches to these results: one rooted in model theory, mainly pushed forward by Shelah; the other grounded in automata theory, which originated from B\"uchi and Rabin. We will focus on the model-theoretic approach here.\label{subsec:hardcoreprelim}

A classical decidability theorem due to B\"uchi and Siefkes \cite{Buchi1973} states:

\begin{theorem}\label{theorem:Decidability}
    Suppose that $\alpha$ is an ordinal, $\alpha<\omega_2$; then MSO theory over $(\alpha,<)$ is decidable.
\end{theorem}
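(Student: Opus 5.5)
Since the statement is the classical theorem of B\"uchi and Siefkes~\cite{Buchi1973}, the plan is to reconstruct its standard proof. The approach is the automata-theoretic one: translate MSO formulas into automata running on transfinite words, and reduce decidability to an effective computation of each automaton's behaviour on the fixed ordinal $\alpha$. First I would define B\"uchi ordinal automata. Such an automaton has finitely many states and reads a word $w\colon\alpha\to\Sigma$, where $\Sigma$ encodes the values of finitely many set variables. It has ordinary transitions at successor steps. At a limit step $\lambda$ it has a limit transition determined by the set of states occurring cofinally often below $\lambda$. By induction on formulas I would show that every MSO formula $\varphi(X_1,\dots,X_k)$ is equivalent, on ordinals $<\omega_1$, to such an automaton. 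Atomic formulas and the Boolean connectives $\vee$, $\wedge$ are routine. Existential quantification is handled by projection. The substantive step is closure under complement, which needs a determinization or complementation construction; I would do it via a Ramsey-type/semigroup argument, namely the composition method in Shelah's style. In that argument one assigns to each word its finite ``type'' and uses the fact (cf.\ the sum theorem, Lemma~\ref{lem:colored-facts}(\ref{itm:colored-sum}), in its MSO form) that the type of $\sum_{i\in I}w_i$ is computable from the types of the $w_i$ together with finite information about $I$.

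Second, for a fixed countable $\alpha$ given in Cantor normal form, I would decide whether some word of length $\alpha$ is accepted. Compute the finite relation $R_\beta\subseteq Q\times Q$ of pairs (entry state, exit state) achievable by runs of length $\beta$. For $\beta=\beta_1+\beta_2$, $R_\beta$ is the composition of $R_{\beta_1}$ and $R_{\beta_2}$. For $\omega^{k+1}=\omega^k\cdot\omega$, $R$ is obtained from $R_{\omega^k}$ by the limit rule. This uses Ramsey's theorem to see that the cofinal state set along an $\omega$-sequence of blocks is governed by an idempotent in the finite semigroup generated by $R_{\omega^k}$. Iterating, the sequence $R_{\omega^k}$ becomes eventually periodic and in fact stabilizes at some computable $k_0$, so the $R_{\omega^\gamma}$ for $\gamma\ge\omega$ are all computable. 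Hence every countable $\alpha$ in Cantor normal form is handled effectively. This step also reproves that the MSO theory of a countable $\alpha$ depends only on $\alpha$ modulo $\omega^\omega$ together with whether $\alpha\ge\omega^\omega$.

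Third, for $\omega_1\le\alpha<\omega_2$, write $\alpha=\omega_1\cdot\beta+\gamma$ with $\beta\ge 1$ and $\beta,\gamma<\omega_2$. Further write $\beta=\omega_1\cdot\beta'+\beta''$ and so on, so that $\alpha$ is a finite sum of blocks built from $\omega_1$ and countable ordinals. By the composition theorem it then suffices to compute the behaviour relation $R_{\omega_1}$ and to handle sums of $\omega_1$-sequences of blocks. I expect this to be the main obstacle. Unlike $\omega$, an $\omega_1$-run is not controlled by a single Ramsey argument over countable cofinalities. One must use the club filter: a set $X\subseteq\omega_1$ is either stationary or co-stationary, and by Fodor's lemma any regressive assignment of finite types to limit points is constant on a stationary set. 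First I would show that the limit transition at $\omega_1$ for each word can be read off from which types occur on a club, and which occur on a stationary set, of countable limit points. This gives an extended automaton model with ``club/stationary'' acceptance conditions, which Büchi--Siefkes shows is again closed under complement and projection. Then the emptiness of such an automaton on $\omega_1$ reduces to a finite combinatorial search, since countable initial segments are already handled by the second step. Combining the three steps gives decidability of the MSO theory of $(\alpha,<)$ for every $\alpha<\omega_2$, assuming effective notations for $\alpha$.
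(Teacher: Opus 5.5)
The paper gives no proof of this statement. It quotes it from B\"uchi--Siefkes and uses it only as a black box, in fact only for $(\omega^\omega,<)$, $(\omega_1,<)$ and ordinals below $\omega^\omega$. Your outline follows the standard architecture, but as written it has concrete gaps.

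\textbf{Second step.} A relation $R_\beta\subseteq Q\times Q$ of (entry, exit) pairs is not compositional under $\omega$-sums. The limit transition at $\omega^{k+1}$ depends on which states occur in infinitely many of the blocks $[\omega^k m,\omega^k(m+1))$. Two runs with the same entry and exit states may visit different interior states. So no idempotent in the semigroup generated by $R_{\omega^k}$ can recover the cofinal set. You need triples $(p,S,p')$, with $S$ the set of visited states, composed by $(p,S,p')\cdot(p',S',p'')=(p,S\cup S',p'')$; alternatively, work with MSO $q$-types directly.

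With this repair your stabilization claim is correct, but eventual periodicity alone does not give it. You need the following argument. Suppose the behaviours $b_k$ of $\omega^k$ are periodic with period $p$ from $k_0$ on. For each $r<p$ we have $\omega^\omega=\sum_{i<\omega}\omega^{k_0+r+pi}$. Hence the behaviour $b_\omega$ of $\omega^\omega$ is the $\omega$-power of $b_{k_0+r}$, which is $b_{k_0+r+1}$, for every $r$. This forces period $1$.

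\textbf{Third step.} This is the real problem.
\begin{itemize}
\item The iterated division $\beta=\omega_1\cdot\beta'+\beta''$ need not terminate. Both $\omega_1^\omega$ and $\omega_1^{\omega_1}$ satisfy $\omega_1\cdot\alpha=\alpha$. They are, respectively, an $\omega$-sum and an $\omega_1$-sum of blocks of strictly increasing type. So ordinals below $\omega_2$ are not finite sums of blocks built from $\omega_1$ and countable ordinals.
\item You therefore need a transfinite induction through $\omega$- and $\omega_1$-indexed sums of non-constant block sequences. You also need an $\omega_1$-level analogue of the stabilization above, giving a normal form in which the type of $\alpha$ depends only on finitely much effectively describable data.
\item This normal form is essential, not cosmetic. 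Most $\alpha<\omega_2$ have no finite notation, so ``assuming effective notations for $\alpha$'' does not show that each fixed theory is decidable.
\item The genuinely new ingredient at $\omega_1$ is not proved. This is closure of automata with club/stationary acceptance under complementation, together with decidability of their emptiness on $\omega_1$. You attribute it to B\"uchi--Siefkes, so at the crucial point your sketch reduces to the same citation the paper makes.
\end{itemize}

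Two minor points. ``Stationary or co-stationary'' is not a dichotomy, since a set and its complement can both be stationary. Fodor's lemma is not needed either: a finite partition of a stationary set has a stationary piece because the nonstationary sets form an ideal. The relevant point is that the club filter is not an ultrafilter, so both club occurrence and stationary occurrence of each type must be recorded.
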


The decidability of the MSO theories of all ordinals is known to be independent of $\mathsf{ZFC}$. It remains open whether this question can be settled in $\mathsf{ZFC}+\mathsf{V=L}$ together with the assertion ``there is no weakly compact cardinal.''; set-theoretic techniques established former indepence, but They do not address the latter.  See \cite{Shelah1975} for background. The following theorem plays a role in Shelah's model-theoretic reconstruction.

For an ordering $I$, a coloring $f\colon[I]^2\to C$, where $C$ is finite, is called additive if $f(x,y)=f(x',y')$ and $f(y,z)=f(y',z')$ imply $f(x,z)=f(x',z')$. We use the following Ramsey theorem for additive colorings.
\begin{theorem}\label{theorem: Additivity Ramsey}
    If $\kappa$ is a limit ordinal, and $f$ is an additive coloring of $\kappa$, then there is an unbounded homogeneous subset $J\subseteq\kappa$ of $f$.
\end{theorem}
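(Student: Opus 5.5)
Throughout, $\kappa$ is a limit ordinal and $f\colon[\kappa]^2\to C$ is an additive colouring with $C$ finite. The plan is to split according to the cofinality of $\kappa$. If $\cf(\kappa)=\omega$, I will reduce to the classical infinite Ramsey theorem for pairs. If $\cf(\kappa)>\omega$, I will use additivity to define a coherent ``eventual colour'' and then extract a homogeneous set by a closure argument.

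\textbf{Countable cofinality.} Fix a strictly increasing sequence $(\gamma_k)_{k<\omega}$ cofinal in $\kappa$. The restriction of $f$ to $[\{\gamma_k:k<\omega\}]^2$ is a finite colouring of pairs of an $\omega$-sequence. By Ramsey's theorem it has an infinite homogeneous subset $J$. Any infinite subset of a cofinal $\omega$-sequence is unbounded in $\kappa$. Additivity is not even needed in this case.

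\textbf{Uncountable cofinality: constructing a type.} First replace $\kappa$ by a closed unbounded set of order type $\cf(\kappa)$. Homogeneous subsets of this set remain unbounded in $\kappa$, so we may assume $\kappa$ is a regular uncountable cardinal.
\begin{itemize}
\item For each $x<\kappa$ and colour $c$, let $A_{x,c}=\{y>x: f(x,y)=c\}$. These finitely many sets partition the tail above $x$.
\item Fix a uniform ultrafilter $U$ on $\kappa$; one containing all clubs is convenient. Define $g(x)$ to be the unique $c$ with $A_{x,c}\in U$.
\item By pigeonhole, some colour $c_0$ satisfies $|\{x: g(x)=c_0\}|=\kappa$. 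Call this set $X$.
\end{itemize}

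\textbf{Uncountable cofinality: building $J$.} Build $J=\{x_\xi:\xi<\kappa\}\subseteq X$ by transfinite recursion. At stage $\xi$, pick
\[
x_\xi\in X\cap\bigcap_{\eta<\xi}A_{x_\eta,c_0}
\]
above all earlier $x_\eta$. The intersection of fewer than $\kappa$ sets from $U$ need not lie in $U$, because $U$ is not $\kappa$-complete. This is the main obstacle, and additivity is where I expect to resolve it.

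The first route is to work with the ``idempotent'' colour given by additivity. Additivity makes $C$, together with the induced partial product, behave like a finite semigroup: if $f(x,y)=a$ and $f(y,z)=b$, then $f(x,z)=a\cdot b$ depends only on $(a,b)$. Choose a colour $e$ with $e\cdot e=e$ and a pair witnessing $e$ that is extendable cofinally. Then set $J$ to be a set of points $y$ such that $f(x,y)=e$ for a fixed anchor $x$. For $y<z$ in $J$ we get $f(x,z)=f(x,y)\cdot f(y,z)=e$. Together with a Ramsey-type refinement, this forces $f(y,z)$ to lie in a set of colours closed under the relevant product. Iterating finitely many times, once per element of the finite semigroup, should shrink to a genuinely homogeneous unbounded set.

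If this fails, the fallback is to run the recursion only along a club. Using Fodor's lemma on the regressive functions ``least index where the colour pattern stabilises'', one obtains a stationary set on which $f(\alpha,\beta)$ depends only on the colour $g$. That stationary set is unbounded and homogeneous.
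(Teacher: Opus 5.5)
Your countable-cofinality case is correct, and so is the reduction to a regular $\kappa$. The paper does not prove this statement; it quotes it from Shelah, so your argument has to stand on its own.

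For $\cf(\kappa)>\omega$ it has a genuine gap. Your recursion needs $x_\xi\in X\cap\bigcap_{\eta<\xi}A_{x_\eta,c_0}$, and nothing you wrote makes this set nonempty. Worse, if $c_0$ is chosen only so that $|X|=\kappa$, the set can be empty already at $\xi=1$, and $c_0$ need not be a homogeneous colour at all. For example, let $p(x)$ be the parity of the finite part of $x$ and put $f(x,y)=p(x)+p(y)\bmod 2$.
\begin{itemize}
\item This colouring is additive, since $f(x,z)=f(x,y)+f(y,z)\bmod 2$, and every homogeneous triple has colour $0$.
\item If $U$ contains the clubs, then $g=p$. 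So $c_0=1$ passes your pigeonhole test, yet $X\cap A_{x_0,1}=\emptyset$.
\end{itemize}
Neither rescue you propose is an argument. A ``Ramsey-type refinement'' on uncountable $\kappa$ is exactly what is being proved; for arbitrary colourings it would require weak compactness. The Fodor sketch never defines the regressive function or explains why its fibres would be homogeneous.

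The missing idea is that additivity makes agreement persist upward: if $f(x,z)=f(x',z)$, then $f(x,z')=f(x',z')$ for every $z'>z$. Combined with your $g$, this closes the gap.
\begin{itemize}
\item Choose $c_0$ so that $X=\{x:g(x)=c_0\}\in U$; exactly one colour qualifies.
\item For $x,x'\in X$, any $z\in A_{x,c_0}\cap A_{x',c_0}$ (a set in $U$, hence nonempty) satisfies $f(x,z)=f(x',z)$. So $x$ and $x'$ agree from some point $\mu(x,x')<\kappa$ onward.
\item Fix $x_0\in X$. At stage $\xi$, choose $x_\xi\in X\cap A_{x_0,c_0}$ with $x_\xi$ above every $x_\eta$ and at least every $\mu(x_0,x_\eta)$, for $\eta<\xi$. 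This is possible because fewer than $\kappa$ bounds are involved and $X\cap A_{x_0,c_0}\in U$ is unbounded.
\end{itemize}
Then $f(x_\eta,x_\xi)=f(x_0,x_\xi)=c_0$ for all $\eta<\xi$, so you only ever need the single set $A_{x_0,c_0}$, never the intersection. The resulting $J$ is unbounded and homogeneous, and the argument works uniformly for every regular $\kappa$, including $\omega$.
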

This theorem is important because, generally, we can't get a homogeneous set from $\kappa\to[\kappa]^2_2$, which is actually a definition of weakly compact when $\kappa$ is uncountable. We note that the $q$-characters form an additive coloring by Lemma~\ref{lem:colored-facts}. In Shelah's work\cite{Shelah1975}, this combinatorial theorem plays a central role in translate theorems and proofs in automata theory into a model-theoretic version. We hope it could also provide a model-theoretic proof of Theorem~\ref{lem:exact-stretching}, but we will use a more simple proof in the context of ordinal automaton.

The finite ordinal-profile scheme below is a consequence of Kamp's theorem
\cite{Kamp1968}.  We also use the simple ordinal-automaton construction of
Demri and Rabinovich \cite[Theorem~1.1, Definition~2.1, and
Lemma~2.3]{DemriRabinovich2010}. Ordinal automata are particularly useful for constructing structures of specified type and length because they generate infinite sequences from finite transition data.

\begin{theorem}[Finite ordinal-profile scheme]
\label{lem:finite-profile-scheme}
Let
$
  L_m=\{<,P_0,\ldots,P_{m-1}\}$
be a finite relational language with a linear order and unary predicates,
and let $\varphi$ be an $L_m$-sentence.  There are finite data
\[
 \mathcal A_\varphi=(D,Q,I,F_{\mathrm{succ}},F_{\mathrm{lim}},
                      \delta_{\mathrm{next}},\delta_{\mathrm{lim}}),
  Q\subseteq\mathcal P(D),
\]
where $D$ is finite,
\[
 I,F_{\mathrm{succ}}\subseteq Q,\qquad
 F_{\mathrm{lim}}\subseteq\mathcal P(D),\qquad
 \delta_{\mathrm{next}}\subseteq Q^2,\qquad
 \delta_{\mathrm{lim}}\subseteq\mathcal P(D)\times Q,
\]
and a decoding map $\ell:Q\to\{0,1\}^{m}$ with the following property.
For a map $r:\alpha\to Q$ and a nonzero limit ordinal
$\delta\leq\alpha$, define
\[
 \operatorname{LimSupp}(r,\delta)=
 \{e\in D:\exists\beta<\delta\ \forall\xi\,
       (\beta<\xi<\delta\longrightarrow e\in r(\xi))\}.
\]
For every nonzero ordinal $\alpha$ and every $L_m$-structure
\[
 \mathfrak M=(\alpha,<,P_0^{\mathfrak M},\ldots,
                         P_{m-1}^{\mathfrak M}),
\]
we have $\mathfrak M\models\varphi$ if and only if there is a map
$r:\alpha\to Q$ such that
\begin{enumerate}
  \item $r(0)\in I$, and
  $\ell(r(\xi))_i=1$ if and only if
  $\xi\in P_i^{\mathfrak M}$, for every $\xi<\alpha$ and $i<m$;
  \item $(r(\xi),r(\xi+1))\in\delta_{\mathrm{next}}$ whenever
  $\xi+1<\alpha$;
  \item $(\operatorname{LimSupp}(r,\delta),r(\delta))\in
  \delta_{\mathrm{lim}}$ for every nonzero limit
  $\delta<\alpha$;
  \item if $\alpha=\beta+1$, then $r(\beta)\in F_{\mathrm{succ}}$,
  and if $\alpha$ is a limit ordinal, then
  $\operatorname{LimSupp}(r,\alpha)\in F_{\mathrm{lim}}$.
\end{enumerate}
The data are finite and effectively obtainable from $\varphi$.
\end{theorem}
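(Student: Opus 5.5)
The plan is to derive the scheme from B\"uchi's automata-theoretic analysis of MSO over ordinals, since first-order logic embeds into MSO. Concretely, I would use the Demri--Rabinovich ordinal automata \cite{DemriRabinovich2010} as the target format, and Kamp's theorem \cite{Kamp1968} to keep the translation effective and uniform in $\alpha$.

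First I translate $\varphi$ by Kamp's theorem into an equivalent formula of temporal logic with Since/Until, evaluated at position $0$. Kamp's theorem holds over all Dedekind-complete linear orders, so in particular over every ordinal $\alpha$. Let $D$ be the finite set of subformulas (the closure) of the translated formula together with the atoms $P_0,\dots,P_{m-1}$. A run $r(\xi)$ is intended to be the set of subformulas true at $\xi$, so $Q\subseteq\mathcal P(D)$ is the set of locally consistent sets and $\ell$ reads off the atoms. Then $I$ consists of the states containing the translated formula. The relation $\delta_{\mathrm{next}}$ encodes the one-step unfolding of Until and Since: for instance, $\psi\,\mathrm{U}\,\chi$ holds at $\xi$ iff either $\chi$ holds at $\xi+1$, or $\psi$ holds at $\xi+1$ and $\psi\,\mathrm{U}\,\chi$ does. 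The relation $\delta_{\mathrm{lim}}$ encodes the behaviour at limits. Since at a limit $\delta$ holds iff the relevant pattern holds cofinally below $\delta$, this is readable from $\operatorname{LimSupp}$ once I add auxiliary ``eventually-always'' markers to $D$. Finally, $F_{\mathrm{succ}}$ and $F_{\mathrm{lim}}$ encode that no Until-obligation is pending at the end.

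Next comes the soundness and completeness argument, by transfinite induction on $\xi$. In one direction, the truth-set run $r(\xi)=\{\psi\in D:\mathfrak M,\xi\models\psi\}$ satisfies (1)--(4). Conversely, any $r$ satisfying (1)--(4) must agree with the truth-set run. The forward past (Since) part is checked by induction upward. The future (Until) part uses well-foundedness: a pending Until obligation is either discharged or propagates forever. Propagating forever is ruled out at limits, because $\delta_{\mathrm{lim}}$ requires the obligation to be discharged before the limit or to be recorded in $r(\delta)$, and the acceptance conditions forbid it from surviving to $\alpha$.

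The main obstacle is the limit clause. At a limit $\delta$, the truth of $\psi\,\mathrm{S}\,\chi$, and the status of Until obligations crossing $\delta$, must be determined purely by $\operatorname{LimSupp}(r,\delta)$. I would handle this by enriching $D$ with markers of the form ``$\chi$ has occurred since the last limit'' and ``$\psi$ has held continuously since some point''. These markers are eventually constant exactly when the intended cofinal properties hold, so the finite set $\operatorname{LimSupp}$ suffices. Effectiveness then follows because the Kamp translation and the closure construction are both computable.
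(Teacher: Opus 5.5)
Your plan takes the same route as the paper. The paper gives no separate argument: it obtains the scheme by citing Kamp's theorem (ordinals are Dedekind-complete, so $\varphi$ becomes a strict Since/Until formula evaluated at $0$) and the Hintikka-set ``simple ordinal automata'' of Demri and Rabinovich, whose limit transitions depend only on $\operatorname{LimSupp}$, and that is what you sketch.

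Two details need fixing when you write it out:
\begin{enumerate}
  \item Take $D$ closed under negation. Then ``$\chi$ occurs cofinally below $\delta$'' is simply $\neg\chi\notin\operatorname{LimSupp}(r,\delta)$, and the ``since the last limit'' markers become unnecessary. As you state them they do not track cofinality: a single occurrence of $\chi$ at $\lambda+1$ makes such a marker eventually true below $\lambda+\omega$, although $\chi$ is not cofinal there.
  \item Let $I$ also require that every Since-formula is absent from $r(0)$. Otherwise the base case of your upward induction for Since fails.
\end{enumerate}
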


\section{A sharper EF-bound for FCOs}\label{sec:EFbound}

In this section, we establish a refined upper bound for the EF bound of finitely colored ordinals and a corresponding lower bound. The two bounds lie in the same tower-exponential growth class. We then give an algorithm that computes the exact value. This work builds on \cite{Mwesigye2011,Mwesigye2018}.

For each $d\geq 1$ and $q<\omega$, fix a set $\Sigma_d(q)$ containing exactly one $q$-characteristic sentence (in the general sense) for each $q$-equivalence class of $d$-colored ordinals. Empty domains and empty color classes are allowed. For any of these sets, a superscript $+$ denotes the subset consisting of those characteristic sentences whose $q$-equivalence class has a nonempty representative. This set is finite. The cardinality of $\Sigma_d(q)$ is the number of $q$-equivalence classes of $d$-colored ordinals. We write $\Sigma_d^{\mathrm{fin}}(q)$ for the classes having finite representatives and $\Sigma_d^{\mathrm{ord}}(q)$ for the classes having infinite representatives.
Mwesigye only considered $\Sigma_d^{\mathrm{fin}}(q)$ and defined his $g$ function in \cite{Mwesigye2011}, so here we put
$$g_{\mathrm{ord}}(d,q)=\max_{\sigma\in\Sigma_d(q)}\min\{\operatorname{otp}(A):\mathfrak A\models\sigma\}.$$

We also recall the quantitative conclusion of the first-occurrence argument in \cite[Theorem~3.4]{Mwesigye2018}. If $q\geq2$, $\mathfrak A$ has least order type in its $q$-equivalence class, and $k=|\rho_{q-1}(\mathfrak A)|$, then
$$\operatorname{otp}(A)<\omega^k\cdot2k^2.$$
Our goal is to improve this estimate.

\subsection{Refining the EF-bound for finitely colored ordinals}

An $q$-character is \emph{cofinal at a limit cut} $\lambda$ when its occurrences below $\lambda$ are cofinal in $\lambda$.

We recall Lemmas~\ref{lem:cutting} and~\ref{lem:limit-cutting}
\cite[Lemma~3.3]{Mwesigye2018}.

\begin{lemma}[cutting lemma]\label{lem:cutting}
Let $\mathfrak A$ be a $d$-colored ordinal, and let $(a_1,a_2)$ and $(b_1,b_2)$ be intervals in $A$. Suppose that
\begin{itemize}
    \item $a_1$ and $b_1$, and respectively $a_2$ and $b_2$, have the same $q$-character;
    \item the sets of $q$-characters realized in $(a_1,a_2)$ and $(b_1,b_2)$ are both equal to $C$;
    \item each interval can be partitioned into $2^q-1$ disjoint intervals, every one of which realizes every member of $C$.
\end{itemize}
Then $(a_1,a_2)\equiv_q(b_1,b_2)$.
\end{lemma}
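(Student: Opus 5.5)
We prove the cutting lemma by induction on $q$, giving Duplicator a winning strategy in the $q$-round EF game between $\mathfrak I=(a_1,a_2)$ and $\mathfrak J=(b_1,b_2)$. The hypothesis on $a_1,b_1$ and $a_2,b_2$ gives $\mathfrak A_{>a_1}\equiv_q\mathfrak A_{>b_1}$ and $\mathfrak A_{<a_2}\equiv_q\mathfrak A_{<b_2}$. Here $q$-character means the $\rho_q$ of the colored ordinal $\mathfrak A$. The partition hypothesis is the combinatorial reservoir that keeps the intervals ``rich'' after each move. The case $q=0$ is trivial: both intervals are nonempty when $C\neq\emptyset$, and when $C=\emptyset$ both are empty, which is the degenerate case.

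For the inductive step $q\to q-1$ (with $q\geq 1$), suppose Spoiler plays $c\in(a_1,a_2)$; the other direction is symmetric. The interval is partitioned into $2^q-1$ blocks each realizing all of $C$. Since $2^q-1=2(2^{q-1}-1)+1$, the element $c$ lies in some block, and either the blocks strictly to its left number at least $2^{q-1}-1$, or those strictly to its right do. The key move is to answer with a $d\in(b_1,b_2)$ of the same $q$-character as $c$. We want it placed so that on each side we can either invoke the induction hypothesis with $2^{q-1}-1$ full blocks, or match $c$'s position exactly.

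I will handle the sides as follows. Regroup the $2^q-1$ blocks of $(b_1,b_2)$ into a left group of $2^{q-1}-1$ blocks, a middle block $M$, and a right group of $2^{q-1}-1$ blocks, and do the same for $(a_1,a_2)$. If $c$ falls in the middle block, choose $d$ in the middle block of the $b$-side with $\rho_q(d)=\rho_q(c)$, which is possible because $M$ realizes all of $C$.

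Now the pairs $(a_1,c),(b_1,d)$ and $(c,a_2),(d,a_2)$ need the hypotheses at level $q-1$. Endpoint characters agree at level $q-1$, since a $q$-character determines its $(q-1)$-character. The left interval contains the left group, so it realizes every member of $C'$ for the set $C'$ of $(q-1)$-characters it realizes. The main obstacle is this: the set of $(q-1)$-characters realized in $(a_1,c)$ must equal that in $(b_1,d)$, and the partial middle block could contribute extra characters. I will argue that the $(q-1)$-characters realized in $(a_1,c)$ are exactly the projections of $C$. Each element of $M$ below $c$ has a $q$-character in $C$, and that character is realized in every left block, so it is already present there. This gives both equality of the sets and the partition condition, with the partial middle block merged into the last full block.

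If instead $c$ lies in a side group, say the left, I would recursively re-partition. The $2^q-1$ blocks split as $(2^{q-1}-1)+1+(2^{q-1}-1)$, and I pick $d$ in the corresponding block on the $b$-side. The left sub-interval $(a_1,c)$ now has fewer full blocks. To fix this, choose $d$ by the \emph{same} recipe in the same relative block index $i$. Then $(a_1,c)$ and $(b_1,d)$ consist of $i-1$ full blocks plus a partial block. If $i-1<2^{q-1}-1$, the left parts are not covered by induction directly, so for small $i$ I would instead use that the $q$-characters of $c$ and $d$ agree: this gives $\mathfrak A_{<c}\equiv_q\mathfrak A_{<d}$, and splitting as in Lemma~\ref{lem:colored-facts}(\ref{itm:colored-splitting}) together with $\mathfrak A_{>a_1}\equiv_q\mathfrak A_{>b_1}$ should yield $(a_1,c)\equiv_{q-1}(b_1,d)$. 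I expect this step, the exact bookkeeping of which side uses the induction hypothesis and which uses the character agreement, to be where the care lies. The right side then has $\geq 2^{q-1}-1$ full blocks and follows by induction. Finally, the sum theorem, Lemma~\ref{lem:colored-facts}(\ref{itm:colored-sum}), combines the two sides with the matched points $c\mapsto d$ to give a $(q-1)$-round win after the first move, hence $(a_1,a_2)\equiv_q(b_1,b_2)$.
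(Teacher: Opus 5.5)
Your middle-block case is sound. With $\rho_q(c)=\rho_q(d)$ and $c,d$ in the two middle blocks, each of the four pieces contains a full block, realizes exactly the projection of $C$, and can be cut into $2^{q-1}-1$ pieces, so the induction hypothesis applies. (The paper gives no argument of its own here; it quotes the lemma from \cite[Lemma~3.3]{Mwesigye2018}.)

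The gap is in the side case, and it is a missing idea, not bookkeeping. If $c$ lies in block $i\le 2^{q-1}-1$, then $(a_1,c)$ never contains $2^{q-1}-1$ full blocks. So the whole case rests on your claim that $\rho_q(c)=\rho_q(d)$ and $\mathfrak A_{>a_1}\equiv_q\mathfrak A_{>b_1}$ give $(a_1,c)\equiv_{q-1}(b_1,d)$, and they do not.
\begin{itemize}
\item Splitting $\mathfrak A_{<c}\equiv_q\mathfrak A_{<d}$ at $a_1$ via Lemma~\ref{lem:colored-facts}(\ref{itm:colored-splitting}) yields only \emph{some} $e<d$ with $(a_1,c)\equiv_{q-1}(e,d)$. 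Nothing forces $e=b_1$.
\item $\mathfrak A_{>a_1}\equiv_q\mathfrak A_{>b_1}$ yields, in a different game, a reply $d'$ with $(a_1,c)\equiv_{q-1}(b_1,d')$. This $d'$ need not be your $d$, need not lie in $J_i$, and a priori need not lie in $(b_1,b_2)$ at all.
\end{itemize}

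The step already fails for $q=2$. Use colours $0,1$, and put $\mathbf B=(01)^\omega$ and $\beta=0\,0\,1\,(01)^\omega$. Let $I\cong J\cong\mathbf B+\beta+\beta+\beta$, and let $\mathfrak A=\mathbf B+a_1+I+a_2+\mathbf B+b_1+J+b_2+\mathbf B$ with $a_1,a_2,b_1,b_2$ of colour $0$. Then:
\begin{itemize}
\item the four endpoints share one $2$-character;
\item exactly five $2$-characters occur in $I$ and in $J$;
\item cutting just after the third point of the first and of the second copy of $\beta$ gives three blocks, each realizing all five.
\end{itemize}
So every hypothesis holds. Let $c$ be the first $1$ of $I$, and let $d$ be the third point of the first copy of $\beta$ in $J$. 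Both lie in block $1$ and have the same $2$-character: each is a $1$ immediately preceded by two $0$'s, the first at a limit position, and followed by a $0$. But $(a_1,c)$ contains only colour $0$, whereas $(b_1,d)\supseteq\mathbf B$ contains both colours, so $(a_1,c)\not\equiv_1(b_1,d)$. Indeed, the only admissible splitting point of $\mathfrak A_{<d}$ is the first point of that copy of $\beta$, not $b_1$.

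So in the side case the reply has to be produced from the left endpoint. For instance, take Duplicator's answer $d'$ in the $q$-round game on $\mathfrak A_{>a_1}$ versus $\mathfrak A_{>b_1}$; here that is the first $1$ of $J$, and it also satisfies $\rho_{q-1}(d')=\rho_{q-1}(c)$. What is missing is an argument that such a $d'$ can be taken inside $(b_1,b_2)$ with at least $2^{q-1}-1$ full blocks to its right, or else that the answer can be moved to the middle block of $J$. Your same-block-index recipe cannot deliver this.
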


\begin{lemma}[Limit cutting]\label{lem:limit-cutting}
    Fix $q\geq2$, put $r = q-1$, and let $\mathfrak A$ be a $d$-colored ordinal. Let $[a_\xi,b_\xi)(\xi<\Lambda)$ be intervals with limit positions as endpoints. Assume that they are strictly separated, meaning that
    $$ \xi<\eta\Longrightarrow a_\xi<b_\xi<a_\eta,\qquad\eta\text{ limit}\Longrightarrow\sup_{\xi<\eta}b_\xi<a_\eta.$$
    
    For each $\xi<\Lambda$, suppose that the $r$-characters occurring cofinally below $a_\xi$ and $b_\xi$ form the same set $X_\xi$, and for every $a\in[a_\xi,b_\xi)$, we have $\rho_r^{\mathfrak A}(a)\in X_\xi$.
    
    Let $B=A\setminus\bigcup_{\xi<\Lambda}[a_\xi,b_\xi)$. Then $\mathfrak A\mathbin{\upharpoonright}B\equiv_q\mathfrak A$.
\end{lemma}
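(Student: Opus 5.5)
I will prove the Limit cutting lemma with an Ehrenfeucht--Fra\"{\i}ss\'e argument. The plan is to show $\mathfrak A\mathbin{\upharpoonright}B\equiv_q\mathfrak A$ by applying the character criterion, Lemma~\ref{lem:colored-facts}(\ref{itm:colored-character}), with $r=q-1$: it suffices to prove $\rho_r(\mathfrak A\mathbin{\upharpoonright}B)=\rho_r(\mathfrak A)$. I expect this to be cleaner than building a $q$-round strategy directly, because the removed blocks are determined by $r$-characters, which is exactly the data the criterion compares.

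\emph{Step 1: $B$ is an ordinal decomposition of $\mathfrak A$.} Since the intervals are strictly separated and each $a_\xi$ is a limit position, I can write
$$\mathfrak A=\sum_{\xi\le\Lambda}\bigl(G_\xi+[a_\xi,b_\xi)\bigr),\qquad \mathfrak A\mathbin{\upharpoonright}B=\sum_{\xi\le\Lambda}G_\xi,$$
where $G_\xi$ is the gap between the preceding intervals and $a_\xi$, and $G_\Lambda$ is the final tail. The separation hypothesis at limit $\eta$ ensures $G_\eta\neq\emptyset$, so gaps are never glued together.

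\emph{Step 2: the key claim, $\mathfrak A_{<a_\xi}\equiv_r\mathfrak A_{<b_\xi}$ for each $\xi$.} Both orders have limit length, and the $r$-characters occurring cofinally below each endpoint form the same set $X_\xi$. Also, every point of $[a_\xi,b_\xi)$ has $r$-character in $X_\xi$. I would first try the cutting lemma (Lemma~\ref{lem:cutting}) at level $r$, or equivalently an additivity/Ramsey argument (Theorem~\ref{theorem: Additivity Ramsey}). Fix a cofinal sequence below $a_\xi$ that is homogeneous for the additive coloring by $r$-types of intervals. The type of a tail block is then an idempotent, and the cofinality of $X_\xi$ pins it down. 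Appending $[a_\xi,b_\xi)$ only inserts points whose characters already lie in $X_\xi$, so the idempotent is absorbed and the $r$-type does not change. The analogous statement for suffixes follows because the suffix beyond $b_\xi$ is literally the same set of points as the suffix beyond the cut.

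\emph{Step 3: transfer of characters.} Using the sum theorem, Lemma~\ref{lem:colored-facts}(\ref{itm:colored-sum}), I then deduce that every $x\in B$ has the same $r$-character in $\mathfrak A\mathbin{\upharpoonright}B$ as in $\mathfrak A$. The reason is that $\mathfrak A_{<x}$ and $(\mathfrak A\mathbin{\upharpoonright}B)_{<x}$ differ only by deleting whole blocks, each of which is $r$-neutral by Step~2; the same holds for the parts above $x$. This gives $\rho_r(\mathfrak A\mathbin{\upharpoonright}B)\subseteq\rho_r(\mathfrak A)$. Conversely, a removed point $a\in[a_\xi,b_\xi)$ has $\rho_r(a)\in X_\xi$. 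That character is realized cofinally below $a_\xi$, hence by some point of the gap $G_\xi\subseteq B$, since everything below $a_\xi$ is eventually in $G_\xi$.

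\emph{Main obstacle.} The hard part is Step~2 and its iteration over $\Lambda$. Mwesigye's proof handles a single cut, but here the deletions occur transfinitely often, including at limit $\eta$. So I must check that the identification of types is uniform, which I would do by transfinite induction on $\xi$ combined with the sum theorem for infinite index sets.
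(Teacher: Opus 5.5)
The paper does not prove this lemma; it quotes it from \cite[Lemma~3.3]{Mwesigye2018}, so your argument has to stand on its own. Its outer frame is sound. By Lemma~\ref{lem:colored-facts}(\ref{itm:colored-character}) it suffices that every $x\in B$ keeps its $r$-character in $\mathfrak A\mathbin{\upharpoonright}B$. Granted that, your converse inclusion is correct: a removed point's character occurs cofinally below $a_\xi$, hence inside $G_\xi$. But that preservation step carries all the content, and it is not established.

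\textbf{Step~2 is a heuristic, not a proof.} The members of $X_\xi$ are characters \emph{in $\mathfrak A$}: their third component is the $r$-type of the whole right part $\mathfrak A_{>z}$, which runs past $b_\xi$.
\begin{itemize}
\item The hypothesis therefore says nothing direct about the $r$-types of blocks $\mathfrak A\mathbin{\upharpoonright}[a_i,a_j)$ of a Ramsey-homogeneous sequence. Nothing identifies the idempotent found below $a_\xi$ with one found, from an unrelated sequence, below $b_\xi$.
\item $[[\mathfrak A_{<a_\xi}]]^r$ is not that idempotent but a $\cf(a_\xi)$-fold sum of it, and $\cf(a_\xi)$, $\cf(b_\xi)$ may differ. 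Colored ordinals can detect countable cofinality already at rank~$4$ (the example after Theorem~\ref{cor:realizability}).
\item For uncountable cofinality, a merely unbounded homogeneous set does not decompose the prefix into blocks at all. This is the obstacle the paper notes after Theorem~\ref{lem:exact-stretching}.
\item ``The idempotent is absorbed'' is just the lemma again at rank $r$. Induction on rank does not supply it, because the hypothesis is not inherited by $\mathfrak A_{<b_\xi}$. From $\rho_r^{\mathfrak A}(z)=\rho_r^{\mathfrak A}(z')$, splitting at $b_\xi$ only yields some cut $b'$ with $\mathfrak A_{(z',b')}\equiv_{r-1}\mathfrak A_{(z,b_\xi)}$, not $b'=b_\xi$.
\end{itemize}

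\textbf{Even granting Step~2, it is not what Step~3 uses.}
\begin{itemize}
\item For $x\in B$ below $a_\xi$, the part above $x$ loses the block $\mathfrak A\mathbin{\upharpoonright}[a_\xi,b_\xi)$ in the middle. That is not ``literally the same set of points''. Since $\equiv_r$ has no left cancellation, $\mathfrak A_{<a_\xi}\equiv_r\mathfrak A_{<b_\xi}$ does not give $\mathfrak A_{(x,a_\xi)}\equiv_r\mathfrak A_{(x,b_\xi)}$.
\item Absorbing into the tail instead, i.e.\ $\mathfrak A_{\geq a_\xi}\equiv_r\mathfrak A_{\geq b_\xi}$, is false in general. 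Take a red $\omega^\omega\cdot 2$ followed by one blue point, with $a=\omega^\omega$ and $b=\omega^\omega\cdot2$ (the terminal situation of Lemma~\ref{lem:supported-block}). This satisfies every hypothesis, yet the two tails already differ at rank~$1$.
\item The same defect breaks your limit stages. Lemma~\ref{lem:colored-facts}(\ref{itm:colored-sum}) needs termwise equivalent summands. Step~2 gives absorption only relative to the full prefix, not $G_\xi+\mathfrak A\mathbin{\upharpoonright}[a_\xi,b_\xi)\equiv_r G_\xi$. Knowing $(\mathfrak A\mathbin{\upharpoonright}B)_{<a_\zeta}\equiv_r\mathfrak A_{<a_\zeta}$ for all $\zeta<\eta$ does not by itself yield it at a limit $\eta$.
\end{itemize}

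The natural way to close all of these holes is the uniform statement $\mathfrak A\mathbin{\upharpoonright}[x,a_\xi)\equiv_r\mathfrak A\mathbin{\upharpoonright}[x,b_\xi)$ for every $x\in G_\xi$. With it, the left and right parts of every $x\in B$ are handled termwise by the sum theorem, limit stages included. But that statement is where the real work lies: it does not follow from Step~2, and the proposal contains no argument for it.
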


We also recall the interval partition from \cite[Theorem~3.4]{Mwesigye2018}. Enumerate the first occurrences of the realized $r$-characters as $x_0<\cdots<x_{k-1}$, where $k$ is the number of realized $r$-characters, and introduce the formal endpoint $x_k=+\infty$. Put $I_i=[x_i,x_{i+1})$. Each $I_i$ is further partitioned into intervals $J_{i,j}$ using the ordered suprema of the occurrence sets of its $r$-characters. Lemma~\ref{lem:cutting} reduces each $J_{i,j}$ to either a singleton or an interval with a limit right endpoint, while Lemma~\ref{lem:limit-cutting} removes redundant overlaps in a representative of least order type.

\begin{lemma}\label{lem:supported-block}
    Fix $q\geq2$, put $r=q-1$, and let $\mathfrak A$ be a $d$-colored ordinal of least order type in its $q$-equivalence class. Let $J$ be one of the convex intervals $J_j$ obtained in the proof of \cite[Theorem~3.4]{Mwesigye2018}, and put
    $$X=\{\rho_r^{\mathfrak A}(a):a\in J\},\qquad s=|X|\geq1.$$
    
    Suppose that every member of $X$ occurs cofinally at the right endpoint of $J$, and that when $s=1$, $\operatorname{otp}(J)$ is either $1$ or a limit ordinal. There is $B\subseteq A$ such that $\mathfrak B=\mathfrak A\mathbin{\upharpoonright}B\equiv_q\mathfrak A$, $\operatorname{otp}(B\cap J)\leq\omega^s$, and the choices may be made simultaneously for all the finitely many intervals $J_j$.
\end{lemma}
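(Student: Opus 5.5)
I will argue by induction on $s=|X|$, working inside the block $J$. The tools are the cutting lemma (Lemma~\ref{lem:cutting}) to make local replacements and the limit cutting lemma (Lemma~\ref{lem:limit-cutting}) to discard whole families of subintervals at once. The target is the following. Inside $J$, keep only a set $B_J$ of order type at most $\omega^s$ such that:
\begin{itemize}
\item[(i)] the removed part is a strictly separated family of intervals $[a_\xi,b_\xi)$ with limit endpoints;
\item[(ii)] at each such interval, the $r$-characters cofinal below $a_\xi$ and below $b_\xi$ coincide;
\item[(iii)] this common set contains every character realized in $[a_\xi,b_\xi)$.
\end{itemize}
Lemma~\ref{lem:limit-cutting} then yields $\mathfrak A\mathbin{\upharpoonright}B\equiv_q\mathfrak A$. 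Because the blocks $J_j$ are finitely many, convex and disjoint, and each removed family lies strictly inside the interior of its own block (away from its left endpoint), the union over all $j$ of the removed families is still strictly separated. So a single application of Lemma~\ref{lem:limit-cutting} handles all blocks simultaneously.

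\emph{Base case $s=1$.} Only one character $\chi$ occurs in $J$. If $\operatorname{otp}(J)=1$ there is nothing to do. Otherwise $J$ has limit order type $\lambda$ and $\chi$ is cofinal at every limit below its right endpoint. Choose an $\omega$-sequence cofinal in $J$ if $\cf(\lambda)=\omega$; in general, pick a cofinal set of limit points. Delete the gaps $[a_\xi,b_\xi)$ between consecutive chosen points, taking limit positions as endpoints and keeping the successor points. Every point of a gap has character $\chi$, and $\chi$ is cofinal below both endpoints, so hypotheses (ii) and (iii) hold. What remains should have type $\omega$.

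One subtlety arises when $\cf(\lambda)>\omega$. A remainder of type $\omega$ would change the cofinality character at the right endpoint. However, least order type of $\mathfrak A$ together with the cofinal-character data should force $\cf=\omega$ in this situation: the limit-cutting deletion itself shows that a shorter equivalent representative would exist otherwise. I would make this point explicit at the outset.

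\emph{Inductive step.} Suppose the claim holds for fewer characters. Within $J$, the first occurrences and the suprema of occurrence sets of the members of $X$ stratify $J$. I will cut $J$ at a cofinal $\omega$-sequence of positions $c_0<c_1<\cdots$ chosen so that every segment $[c_m,c_{m+1})$ realizes all of $X$. On each segment, consider the points where some character of $X$ stops being cofinal. At those points, fewer than $s$ characters are cofinal, and the induction hypothesis applies to the corresponding sub-blocks. This bounds the kept part of each segment by $\omega^{s-1}\cdot k$ for some finite $k$, so the kept part of $J$ has type at most $\omega^{s-1}\cdot\omega=\omega^s$.

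To make the segments uniform, I use Lemma~\ref{lem:cutting}. Segments with equal endpoint characters that each split into $2^q-1$ pieces realizing all of $X$ are $\equiv_q$. Hence I can replace the whole tail of segments by a repetition of one reduced segment. Equivalently, I delete the redundant intervals between chosen limit points, which is again checked via Lemma~\ref{lem:limit-cutting}. The hypothesis that all of $X$ is cofinal at the right endpoint is exactly what guarantees condition (ii) at the deleted intervals.

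\emph{Main obstacle.} The hardest step will be keeping the deletions in the inductive step legitimately of the form required by Lemma~\ref{lem:limit-cutting}. Deleted intervals must start and end at limit positions, and the cofinal sets at both ends must agree. Meanwhile the kept part must contain each character's occurrences densely enough that the cofinal sets at kept limit points are unchanged. My first attempt would be to always choose cut points that are limits of previously kept points. Then cofinal sets at kept limits are inherited from the kept subsequence, and I can verify (ii) by the induction hypothesis applied inside each segment.
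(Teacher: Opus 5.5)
\textbf{Gap in the inductive step.} The step does not go through as written. Your induction hypothesis is the lemma itself. That statement applies only to the blocks $J_j$ of the Mwesigye partition, and only when all of a block's characters are cofinal at its right endpoint. It also produces a retained subset $B$, not an order-type bound for an arbitrary convex piece of $J$. The sub-blocks you carve out of a segment $[c_m,c_{m+1})$ meet neither requirement.

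More seriously, the claim that each segment splits into finitely many sub-blocks exhibiting fewer than $s$ characters fails whenever the segment contains an interior point $x$ below which every member of $X$ is cofinal. In any finite convex decomposition, one piece contains a tail $[y,x)$, and that tail exhibits all of $X$. Your choice of the $c_m$ does not exclude such points, and excluding or removing them is precisely the content of the lemma.

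Replacing the tail of segments by repetitions of one reduced segment via Lemma~\ref{lem:cutting} has two further problems. It yields a structure that is not of the form $\mathfrak A\mathbin{\upharpoonright}B$, as the statement requires. And that lemma is phrased with $q$-characters, whereas your segments only control $r$-characters.

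\textbf{The missing ingredient.} You need what the earlier stages of the induction in the proof of Mwesigye's Theorem~3.4 already provide: every convex $H\subseteq J$ exhibiting only characters from a proper subset $Y\subsetneq X$ satisfies $\operatorname{otp}(H)<\omega^{|Y|}\cdot 2\leq\omega^{s-1}\cdot 2$. With this bound, no segmentation is needed.
\begin{itemize}
\item Suppose $\operatorname{otp}(J)>\omega^s$, and let $Y$ be the set of characters cofinal below relative position $\omega^s$ in $J$.
\item If $Y\neq X$, take a common bound for the occurrences below $\omega^s$ of the finitely many members of $X\setminus Y$. Above it lies a convex interval of type $\omega^s$ exhibiting only $Y$, contradicting the bound.
\item So all of $X$ is cofinal at relative position $\omega^s$, and by hypothesis also at the right endpoint. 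A \emph{single} application of Lemma~\ref{lem:limit-cutting} then deletes the tail of $J$ from position $\omega^s$ onward.
\item For a terminal block, first adjoin a last point of a fresh color and remove it after the cut. Your proposal does not address this case.
\end{itemize}
Your framing of handling all blocks by one simultaneous limit cut is fine.

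\textbf{The base case.} The same reasoning shows the base case should be one tail cut from relative position $\omega$. Removing infinitely many strictly separated intervals with limit endpoints leaves a piece of type at least $\omega$ between consecutive cuts, hence at least $\omega^2$ in total, not $\omega$. Your cofinality worry is moot: Lemma~\ref{lem:limit-cutting} only compares sets of cofinal characters, and first-order equivalence does not see cofinality.
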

\begin{proof}
    Keep all the preceding steps of the induction at the end of the proof of \cite[Theorem~3.4]{Mwesigye2018}. Before its last step for $X$, it has already shown that, for every nonempty proper subset $Y$ of $X$, every convex $H\subseteq J$ exhibiting only $r$-characters in $Y$ satisfies $\operatorname{otp}(H)<\omega^{|Y|}\cdot2$.
    
    Suppose that $\operatorname{otp}(J)>\omega^s$, and let $Y$ be the set of $r$-characters occurring cofinally below the relative position $\omega^s$ in $J$. The set $Y$ is nonempty. If $Y$ were a proper subset of $X$, the occurrences below $\omega^s$ of the finitely many members of $X\setminus Y$ would have a common bound. Above this bound, there would be a convex interval $H$ exhibiting only members of $Y$ and having order type $\omega^s$, contrary to
    $$\operatorname{otp}(H)<\omega^{|Y|}\cdot2\leq\omega^{s-1}\cdot2<\omega^s.$$

    Hence $Y=X$. Every member of $X$ also occurs cofinally at the right endpoint of $J$. If this endpoint is internal to $A$, Lemma~\ref{lem:limit-cutting} removes the interval between these two positions. For a terminal $J$, use the terminal form of the same argument, obtained by adjoining one last point of a fresh color and removing it after the cut. In either case, the retained part of $J$ has order type $\omega^s$. The simultaneous form gives the required $B$ for all the intervals $J_j$.
\end{proof}

\begin{theorem}[Refined bound]\label{thm:refined-bound}
    Let $q\geq2$, $k=|\rho_{q-1}(\mathfrak A)|$. For every $\mathfrak A$, there is a $\mathfrak B$ that is $q$-equivalent to $\mathfrak A$ and $\operatorname{otp}(B)\leq\omega^k$.
\end{theorem}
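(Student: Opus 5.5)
The plan is to start from a representative of least order type and bound that representative directly. Fix $\mathfrak A$ and let $\mathfrak A'$ be a $q$-equivalent $d$-colored ordinal of least order type. By Lemma~\ref{lem:colored-facts}(\ref{itm:colored-character}), $\rho_{q-1}(\mathfrak A')=\rho_{q-1}(\mathfrak A)$, so $\mathfrak A'$ also realizes exactly $k$ characters. Write $r=q-1$, and take the first occurrences $x_0<\dots<x_{k-1}$, the intervals $I_i=[x_i,x_{i+1})$ and the finer blocks $J_{i,j}$ as in the recalled partition of \cite[Theorem~3.4]{Mwesigye2018}. By the cutting lemma (Lemma~\ref{lem:cutting}), each block is either a singleton or has a limit right endpoint at which every character it exhibits is cofinal. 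These are exactly the hypotheses of Lemma~\ref{lem:supported-block}, so it gives one $B$ with $\mathfrak A'\mathbin{\upharpoonright}B\equiv_q\mathfrak A'$ and $\operatorname{otp}(B\cap J)\leq\omega^{s_J}$ for every block $J$. Since $\mathfrak A'$ is already of least type, minimality forces the order type of $B$ to equal that of $\mathfrak A'$. Hence $\operatorname{otp}(A')$ is a finite ordinal sum $\sum_J\beta_J$ with $\beta_J\leq\omega^{s_J}$.

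The next step is counting characters. Every character occurring in $I_i$ has its first occurrence at or before $x_i$, so every block inside $I_i$ has $s_J\leq i+1$. Moreover, the blocks within one $I_i$ are cut at the ordered suprema of the occurrence sets, so as $j$ increases the exhibited sets shrink. Consequently all blocks in $I_0,\dots,I_{k-2}$ have $s_J\leq k-1$. Their finite sum is therefore $<\omega^{k-1}\cdot m<\omega^k$ for some $m<\omega$. If $I_{k-1}$ contributes a block of type $\omega^k$, that block absorbs everything before it, because $\beta+\omega^k=\omega^k$ whenever $\beta<\omega^k$.

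The main obstacle is the final segment. A block with $s_J=k$ has every realized character cofinal at its right end, and I must show nothing nonempty follows it. If the right endpoint $\lambda$ were internal, let $a$ be the least point at or after $\lambda$. I would show that every character occurring after $\lambda$ already occurs cofinally below $\lambda$. Then the limit cutting lemma (Lemma~\ref{lem:limit-cutting}), applied to the interval from $\lambda$ to the end in its terminal form (adjoin a fresh last point and remove it afterwards, as in the proof of Lemma~\ref{lem:supported-block}), shows that deleting this tail preserves the $q$-class. That deletion strictly lowers the order type and contradicts minimality. Alternatively, if the cofinality condition fails for some tail character, that character is not cofinal at $\lambda$, and then $s_J\leq k-1$ after all.

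The same dichotomy covers the case where no block has $s_J=k$. Then all blocks satisfy $s_J\leq k-1$, and the whole sum is $<\omega^{k-1}\cdot m<\omega^k$. Putting the cases together gives $\operatorname{otp}(A')\leq\omega^k$, and $\mathfrak B=\mathfrak A'$ witnesses the theorem. I expect checking this terminal dichotomy, especially that the tail characters are cofinal below $\lambda$, to be where the real work lies. I would attack it first by comparing the right-hand characteristics $[[\mathfrak A_{>x}]]^r$ for $x$ below $\lambda$ with those of points after $\lambda$.
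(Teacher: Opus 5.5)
The overall architecture is the paper's: pass to a least-type representative, apply Lemma~\ref{lem:supported-block} to all blocks at once, and sum. Your coarse bound for $I_0,\dots,I_{k-2}$ is fine. There is, however, a genuine gap in the terminal step, which is the only place your route departs from the paper.

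You say the real work is to show that every character occurring after $\lambda$ is cofinal below $\lambda$. That part is automatic: $s_J=k$ means all $k$ characters occur in $J$, and by your own hypothesis on blocks they are all cofinal at $\lambda$. What Lemma~\ref{lem:limit-cutting} actually requires, for removing $[\lambda,\mathrm{end})$ in its terminal form, is two further things. First, the right end (the adjoined point) must be a limit position. Second, the set of characters cofinal at the end of $\mathfrak A'$ must equal the set cofinal below $\lambda$, i.e.\ all $k$ characters. Nothing in your proposal gives this, and it does not follow from $s_J=k$. Your ``alternative'' branch is also incoherent: $s_J$ counts characters occurring in $J$, so the behaviour of tail characters cannot lower it. As written, the claim that a block with $s_J=k$ must be last is unproved.

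The missing observation is that this case never occurs. Since $r=q-1\geq 1$, the characteristic $[[\varnothing]]^{r}$ differs from that of every nonempty order. So the character of the least point of $\mathfrak A'$ is realized only there, and it is not cofinal below any limit.

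If $k\geq 2$, a block with $s_J=k$ is not a singleton and contains that character. Your block hypothesis would then make it cofinal at a limit right endpoint, which is a contradiction. Hence every block has $s_J\leq k-1$, and
\[
\operatorname{otp}(A')\leq\omega^{k-1}\cdot m<\omega^k,
\]
where $m$ is the number of blocks. If $k=1$, then $\mathfrak A'$ is a single point. This closes your argument.

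The paper gets the corresponding fact differently. It asserts that within each $I_i$ the blocks carry pairwise disjoint character sets, so a block with all $i+1$ characters is all of $I_i$. That gives $\operatorname{otp}(I_i)\leq\omega^{i+1}$ and the total $\sum_{i<k}\omega^{i+1}=\omega^k$. Your remark that the exhibited sets shrink as $j$ increases is not what the paper claims, and you do not need it: your bound on $I_0,\dots,I_{k-2}$ follows from $s_J\leq i+1$ alone.
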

\begin{proof}
    The empty case is immediate. Replace $\mathfrak A$ by a representative of least order type in the $q$-equivalence class of $\mathfrak A$. The value of $k$ is unchanged by Lemma~\ref{lem:colored-facts}. Put $r=q-1$ and retain the notation $I_i$ and $J_{i,j}$ from the proof of \cite[Theorem~3.4]{Mwesigye2018}. At most $i+1$ different $r$-characters occur in $I_i$, and the nonempty sets of $r$-characters occurring in its intervals $J_{i,j}$ are pairwise disjoint. By Lemma~\ref{lem:supported-block}, an interval in which $u$ different $r$-characters occur has order type at most $\omega^u$.
    
    If one interval in $I_i$ realizes all $i+1$ characters, it is the only one. Otherwise, every interval realizes at most $i$ characters, and their finite sum is below $\omega^{i+1}$. The resulting $\mathfrak B\equiv_q\mathfrak A$ therefore satisfies
    $$\operatorname{otp}(B)\leq\sum_{i<k}\omega^{i+1}=\omega^k.$$
    
    The minimality of $\mathfrak A$ gives the same bound for $\operatorname{otp}(A)$.
\end{proof}

We also note that some small q-characters only appear once in the initial segments.
\begin{lemma}[Small orders only appear once]\label{lem:small-chain}
    If $r\geq2$ and $a<2^r-1$, then the finite order of length $a$ is not $r$-equivalent to any longer order.
\end{lemma}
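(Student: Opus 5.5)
The plan is to prove a slightly stronger statement by induction on $r$, starting at $r=1$. The stronger statement is: if $a\leq 2^r-2$ and $\mathfrak B$ is any linear order (finite or an infinite ordinal) with strictly more than $a$ elements, then Spoiler wins the $r$-round EF game on the finite order $a$ versus $\mathfrak B$. Colors can be ignored. If the orders carry colorings, Spoiler's winning strategy on the underlying orders already produces a violation of some $<$-relation or equality, so it still wins. The stronger statement gives Lemma~\ref{lem:small-chain} at once, since $r$-equivalence would mean Duplicator wins the $r$-round game.

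Base case $r=1$. Here $2^1-2=0$, so $a=0$ and $\mathfrak B$ is nonempty. Spoiler picks any point of $\mathfrak B$, and Duplicator has no reply.

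Inductive step. Assume the claim for $r$, and let $a\leq 2^{r+1}-2$ and $|\mathfrak B|\geq a+1$. If $a=0$, argue as in the base case. Otherwise put $m=\lceil a/2\rceil$, so that $m\leq 2^r-1$ and $\lfloor a/2\rfloor-1\leq 2^r-2$. Since $\mathfrak B$ has at least $a+1$ elements, it has an element $x$ at finite position $m$. Then $|\mathfrak B_{<x}|=m$ and $|\mathfrak B_{>x}|\geq a-m$. Spoiler's first move is $x$. Duplicator answers with some $y\in a$, and $y$ splits $a$ into a left part of size $p$ and a right part of size $a-1-p$. There are three cases.
\begin{itemize}
  \item If $p<m$, then $p\leq 2^r-2$ and $p<|\mathfrak B_{<x}|$. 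Spoiler plays the remaining $r$ rounds on the left parts.
  \item If $p>m$, then $a-1-p<a-1-m=\lfloor a/2\rfloor-1\leq 2^r-2$ and $a-1-p<|\mathfrak B_{>x}|$. Spoiler plays on the right parts.
  \item If $p=m$, then $a-1-m\leq 2^r-2$ and $|\mathfrak B_{>x}|\geq a-m>a-1-m$. Spoiler again plays on the right parts.
\end{itemize}
In each case the induction hypothesis gives Spoiler a win in the remaining $r$ rounds on the chosen pair of parts.

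The only step needing care is the standard relativization: a win for Spoiler on a pair of sub-intervals lifts to a win in the whole game after the split at $(x,y)$. This holds because Spoiler only ever plays inside the chosen side. If Duplicator answers on the wrong side of the pivot, the order relation with the pivot pair $(x,y)$ is already violated, so Spoiler wins at that point. Otherwise the play is a legal play of the subgame. This is the same mechanism behind Lemma~\ref{lem:colored-facts}(\ref{itm:colored-splitting}). The arithmetic choice $m=\lceil a/2\rceil$ is the main point. It guarantees that whichever side Duplicator's mismatch lands on, the finite side has length at most $2^r-2$, so the hypothesis for $r$ applies.
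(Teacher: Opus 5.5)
Your proof is correct and follows the paper's argument: induction on $r$, where Spoiler first picks a pivot in the longer order and then applies the induction hypothesis to the side on which Duplicator's reply leaves the finite order too short. The only difference is the pivot: the paper settles the case $a<2^r-1$ directly by the induction hypothesis and otherwise pivots at the point with exactly $2^r-1$ predecessors, whereas you pivot at position $\lceil a/2\rceil$, and you also spell out the infinite-ordinal case and the relativization step, which the paper leaves implicit.
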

\begin{proof}
    We use induction on $r$. The assertion for $r=1$ compares the empty order with a nonempty order. For the induction step from $r$ to $r+1$, put $t=2^r-1$ and compare orders of lengths $a<b$ with $a<2^{r+1}-1=2t+1$. If $a<t$, the induction hypothesis already distinguishes the orders. Otherwise, $\forall$ chooses a point with exactly $t$ predecessors. Every response in the shorter order falls into the induction hypothesis.
\end{proof}

\begin{corollary}[Improved upper bound]\label{thm:improved-bound}
Let $q\geq2$, and let $\mathfrak A$ have the least order type in its
$q$-equivalence class. Put $k=\lvert\rho_{q-1}(\mathfrak A)\rvert$. Then
$$k\leq 2^{q-1}-1\Longrightarrow\operatorname{otp}(A)=k,\qquad k>2^{q-1}-1\Longrightarrow\operatorname{otp}(A)\leq\omega^{k-2^{q-1}+1}.$$
\end{corollary}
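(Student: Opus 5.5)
The plan is to refine the proof of Theorem~\ref{thm:refined-bound} by exploiting Lemma~\ref{lem:small-chain}. That lemma shows that the first few $r$-characters, with $r=q-1$, are forced to occur at a finite initial segment of $\mathfrak A$. Each of these contributes a single point, not a block of type $\omega^u$.

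\textbf{Step 1: the initial points have distinct characters.} Put $t=2^{r}-1$. For $a<\operatorname{otp}(A)$ with $a<t$ finite, the initial segment $\mathfrak A_{<a}$ is a finite $d$-colored order of length $a$. By Lemma~\ref{lem:small-chain}, applied to the underlying order (colors only refine the equivalence), $\mathfrak A_{<a}$ is not $r$-equivalent to any longer order. Hence the $r$-characters of the points $0,1,\dots,\min(t,\operatorname{otp}(A))-1$ are pairwise distinct. Moreover, none of them recurs at any later point $b$, since $\mathfrak A_{<b}$ is strictly longer than $\mathfrak A_{<a}$.

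For $r=1$ we have $t=1$ and the claim about point $0$ is trivial. Lemma~\ref{lem:small-chain} needs $r\ge2$, but for $r=1$ only the empty-versus-nonempty comparison is needed.

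\textbf{Step 2: the case $k\le t$.} We claim $A$ is finite. Suppose $\operatorname{otp}(A)\ge t$. Then by Step 1 the points $0,\dots,t-1$ already carry $t$ distinct characters, none of which recurs. If $k=t$ there is no character left for any further point, so $\operatorname{otp}(A)=t=k$. If $\operatorname{otp}(A)<t$, every point carries a fresh character, so $\operatorname{otp}(A)=k$ again. Either way $\operatorname{otp}(A)=k$.

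\textbf{Step 3: the case $k>t$.} Now $\operatorname{otp}(A)\ge t$; otherwise Step 1 would give $k=\operatorname{otp}(A)<t$. The points $0,\dots,t-1$ are exactly the first occurrences $x_0,\dots,x_{t-1}$ in the enumeration used for $I_i$. Each $I_i=\{i\}$ for $i<t-1$ is a singleton.

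The interval $I_{t-1}=[t-1,x_t)$ realizes only the character of $t-1$, and that character does not recur. So $I_{t-1}=\{t-1\}$ as well, and $x_t=t$.

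For $i\ge t$, the interval $I_i$ realizes only characters among those first occurring at $x_t,\dots,x_i$, since the first $t$ never recur. That is at most $i-t+1$ characters. Following the proof of Theorem~\ref{thm:refined-bound}, with Lemma~\ref{lem:supported-block} bounding each block, we get $\operatorname{otp}(I_i)\le\omega^{i-t+1}$ after replacement.

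Summing, and using minimality of $\mathfrak A$, gives
$$\operatorname{otp}(A)\le t+\sum_{t\le i<k}\omega^{i-t+1}=t+\omega^{k-t}=\omega^{k-t},$$
since $k-t\ge1$ absorbs the finite part. This is the stated bound $\omega^{k-2^{q-1}+1}$.

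\textbf{Main obstacle.} The delicate step is Step 3's reuse of the counting inside Theorem~\ref{thm:refined-bound}: the characters available in $I_i$ must be counted as those first occurring at positions $\ge t$, not $i+1$. This requires checking that the block-disjointness argument and Lemma~\ref{lem:supported-block} apply verbatim to this restricted character set. I expect this to go through, because Lemma~\ref{lem:supported-block} only depends on the set $X$ of characters actually realized in the block.
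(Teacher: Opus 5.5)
Your proposal is correct and is the argument the paper intends. The paper states the corollary without proof, directly after Lemma~\ref{lem:small-chain} and Theorem~\ref{thm:refined-bound}, and your steps fill that gap: Lemma~\ref{lem:small-chain} makes the first $t=2^{q-1}-1$ points carry pairwise distinct, non-recurring $(q-1)$-characters, so $I_0,\dots,I_{t-1}$ are singletons, and rerunning the counting of Theorem~\ref{thm:refined-bound} on the remaining $k-t$ characters gives $t+\omega^{k-t}=\omega^{k-t}$. Your separate handling of $r=1$ is fine, as is the finite case $k\le t$, which indeed does not need minimality.
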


\subsection{Lower bounds}

In this section, we first construct, for $d\geq2$, a $d$-colored ordinal whose equivalence class has no representative of order type below $\omega^{|\Sigma_{d-1}(q)|}$. The binary case requires a separate encoding. We note that, our purpose is to determine the growth class of the lower bound rather than its optimal value.

\begin{lemma}[Lower bound construction]\label{lem:marker-hierarchy}
    For $d\geq2$ and $q\geq1$, there is a $d$-colored ordinal
    $\mathfrak V$ such that every $d$-colored ordinal
    $\mathfrak B\equiv_{q+4}\mathfrak V$ satisfies
    $$\operatorname{otp}(B)\geq\omega^{|\Sigma_{d-1}(q)|}.$$
  \end{lemma}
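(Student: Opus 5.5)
We construct $\mathfrak V$ by using one distinguished color as a \emph{marker} and the remaining $d-1$ colors to carry ``letters''. By Theorem~\ref{thm:refined-bound} every class in $\Sigma_{d-1}(q)$ has a representative of order type at most some fixed bound. Enumerate the classes in $\Sigma_{d-1}(q)$ as $\sigma_1,\dots,\sigma_N$, where $N=|\Sigma_{d-1}(q)|$, and fix representatives $\mathfrak W_1,\dots,\mathfrak W_N$ colored by $C_1,\dots,C_{d-1}$. For each $j$ let $\mathfrak L_j=\mathfrak W_j+\{m\}$, where $m$ is a single point of the marker color $C_0$. Every marker point then has an immediately preceding maximal marker-free block, which is of class $\sigma_j$. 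Define recursively
\[
\mathfrak V_1=\mathfrak L_1\cdot\omega,\qquad \mathfrak V_{j+1}=(\mathfrak V_j+\mathfrak L_{j+1})\cdot\omega,
\]
and put $\mathfrak V=\mathfrak V_N$. Thus $\operatorname{otp}(\mathfrak V)\ge\omega^N$, and the letter $\sigma_{j+1}$ occurs cofinally only at limits of the $(j+1)$-st level of the hierarchy, strictly after infinitely many copies of the lower levels. The point of the construction is that ``a marker whose preceding block has class $\sigma_j$'' is a definable property. Indeed, with one quantifier we fix the marker $m$; with a second we pick the previous marker, or detect its absence; the block between them is relativized, and its $q$-class is expressed by relativizing $[[\cdot]]^q$. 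This costs $q+2$ quantifiers, so the type of each marker is visible at rank $q+2$.

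Now let $\mathfrak B\equiv_{q+4}\mathfrak V$. We prove by induction on $j$ the following claim: for every marker $b\in B$ whose block has class $\sigma_{j}$, with $j\ge 1$, the initial segment $B_{<b}$ has order type at least $\omega^{j-1}$, and more precisely contains cofinally many markers of each letter $\sigma_i$ with $i<j$ below every such $b$ after its last $\sigma_{j}$-marker. In $\mathfrak V$ the following sentences of rank at most $q+4$ hold:
\begin{enumerate}
\item \emph{``between any two $\sigma_{j}$-markers, and before the first one, there are $\sigma_{i}$-markers for every $i<j$, with no last $\sigma_{j-1}$-marker''}; for $j-1=i$ this is ``for every $\sigma_{j-1}$-marker there is a later $\sigma_{j-1}$-marker before the next $\sigma_j$-marker'';
\item \emph{``for every $\sigma_j$-marker there is a later one'' and ``between consecutive $\sigma_{j}$-markers there is no $\sigma_{k}$-marker for $k>j$''}.
\end{enumerate}
The two extra quantifiers beyond $q+2$ are exactly what is spent to quantify the two markers in these $\forall\exists$ statements. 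Transferring these sentences to $\mathfrak B$, a $\sigma_j$-marker in $\mathfrak B$ is preceded by an unbounded $\omega$-sequence of $\sigma_{j-1}$-markers inside a gap free of $\sigma_j$. By induction, each of these is preceded within the gap by order type $\omega^{j-2}$. Hence the gap has order type at least $\omega^{j-2}\cdot\omega=\omega^{j-1}$. Finally, $\mathfrak B$ also satisfies ``there is no last $\sigma_N$-marker'' together with ``some $\sigma_N$-marker exists''. This yields $\operatorname{otp}(B)\ge\omega^{N-1}\cdot\omega=\omega^N$.

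\textbf{Main obstacle.} The delicate step is ensuring that the distinguishing sentences genuinely have rank at most $q+4$. We first try to relativize the $q$-characteristic of a block to the interval between two quantified markers. The risk is that ``previous marker'' needs its own quantifier and an extra universal quantifier, pushing the rank to about $q+3$ for the type predicate and beyond $q+4$ inside the $\forall\exists$ statements. If this happens, we fall back on a second approach: replace ``$\sigma_j$-block'' by the condition ``the $q$-class of the $C_0$-free interval ending at $m$''. That interval is the maximal one, so it needs only the bounding marker plus one existential witness. The remaining care is the edge case of empty blocks and of the first block. This is handled by allowing empty classes in $\Sigma_{d-1}(q)$, which the convention of the section permits.
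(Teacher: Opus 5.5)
Your argument is correct in substance. It uses the same ingredients as the paper: one delimiter color, one marker class per member of $\Sigma_{d-1}(q)$, the $\omega$-iterated nesting, and transfer of rank-$(q+4)$ sentences. The order-type deduction is organized differently, though.

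The paper puts each delimiter \emph{before} its block: $\mathfrak V_{i+1}=(\mathfrak V_i+\mathfrak M_{i+1})^\omega$, where $\mathfrak M_{i+1}$ is a delimiter followed by $\mathfrak U_{i+1}$. So every $P_i$-point sits exactly at a limit of $P_{i-1}$-points. A single sentence $\Phi_s$ (``some point exists, $P_s$ is cofinal, and every $P_i$-point is a nonzero limit of $P_{i-1}$-points'') then forces the $P_s$-points into the iterated derived set $\Delta_{s-1}(\beta)=\{\omega^{s-1}\gamma:\gamma>0\}$. Cofinality gives $\beta\ge\omega^s$ with no further bookkeeping.

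You place the marker \emph{after} its block, so your higher markers are not limits of lower ones whenever the block is nonempty. You therefore have to count instead: each gap between consecutive $\sigma_j$-markers contains an $\omega$-sequence of consecutive $\sigma_{j-1}$-markers, and you sum order types. This is placement-independent and works. The price is several sentences per level: a lower marker in each gap and before the first marker; ``every $\sigma_{j-1}$-marker has a later one with no $\sigma_j$-marker in between''; and unboundedness of each letter. All of these do have rank at most $q+4$ once the marker predicate has rank $q+1$ in its free variable.

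Two things need tightening.

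First, your first definition of the block (via the previous marker, or $[0,m)$ if there is none) is not just risky but wrong in your own $\mathfrak V$. The $\sigma_{j+1}$-marker after a copy of $\mathfrak V_j$ has markers below it but no greatest one. So you must use the fallback, relativizing to $\{z<m:\forall w\,(z\le w<m\to\neg C_0(w))\}$. This costs a universal rather than an existential quantifier, and still gives rank $q+1$ in $m$.

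Second, state the induction for gaps rather than for $B_{<b}$. The claim should be that every half-open interval $[a,b)$ between consecutive $\sigma_j$-markers, and $[0,b)$ for the first $\sigma_j$-marker, has order type at least $\omega^{j-1}$. As written, ``after its last $\sigma_j$-marker'' is undefined for markers that are limits of $\sigma_j$-markers. Also, a bound on $B_{<b}$ does not yield the bound ``within the gap'' that you actually use. With consecutive gaps, take $x_{k+1}$ to be the next $\sigma_{j-1}$-marker after $x_k$; then each $[x_k,x_{k+1})$ is a level-$(j-1)$ gap inside $[a,b)$. The half-open convention also takes care of empty blocks at level $1$.
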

\begin{proof}
    Reserve $C_{d-1}$ as a delimiter color and enumerate $\Sigma_{d-1}(q)=\{\sigma_1,\ldots,\sigma_s\}$. Choose a representative $\mathfrak U_i\models\sigma_i$ for each $i$. A delimiter point $x$ is marked by $P_i$ when the open interval from $x$ to the next delimiter satisfies $\sigma_i$. The formula $P_i(x)$ has rank at most $q+1$, and $P_i$ are pairwise disjoint.
    
    Let $\mathfrak M_i$ be a delimiter point followed by $\mathfrak U_i$, and define
    $$\mathfrak V_1=\mathfrak M_1^\omega,\qquad \mathfrak V_{i+1}=(\mathfrak V_i+\mathfrak M_{i+1})^\omega.$$
    The $P_i$-points are cofinal in $V_i$, and every $P_i$-point for $i>1$ is a nonzero limit of $P_{i-1}$-points. More explicitly, put
    \[\begin{split}
        P_i(x):={}&C_{d-1}(x)\wedge\exists y\bigl(x<y\wedge C_{d-1}(y)\\
        &\wedge\forall z\,(x<z<y\longrightarrow\neg C_{d-1}(z))\wedge (\sigma_i)^{(x,y)}\bigr),
    \end{split}\]
    where the superscript denotes relativization to $(x,y)$. The required properties are expressed by
    \[\begin{split}
        \Phi_s:={}&\exists w\,(w=w)\wedge\forall x\,\exists y\,(x<y\wedge P_s(y))\wedge\bigwedge_{i=2}^s\forall x\Bigl(P_i(x)\longrightarrow\\
        &\Bigl[\exists u\,(u<x)\wedge\forall y\bigl(y<x\longrightarrow\exists z\,(y<z<x\wedge P_{i-1}(z))\bigr)\Bigr]\Bigr).
    \end{split}\]
    Its quantifier rank is at most $q+4$.

    For an ordinal $\beta$, define $\Delta_0(\beta)=\beta$ and let $\Delta_{j+1}(\beta)$ be the nonzero limit points of $\Delta_j(\beta)$. For $j\geq1$, induction gives
    $$\Delta_j(\beta)=\{\omega^j\gamma<\beta:\gamma>0\}.$$
    Let $s=|\Sigma_{d-1}(q)|\geq2$. The sentence $\Phi_s$ implies that $\Delta_{s-1}(\beta)$ is cofinal in $\beta$, whence $\beta\geq\omega^s$. This applies to the domain of every colored ordinal $(q+4)$-equivalent to $\mathfrak V_s$. Take $\mathfrak V=\mathfrak V_s$.
\end{proof}

We next encode both the delimiter and the intervening data using two colors. Write the colors as $0$ and $1$,
$\mathtt d=10,\qquad\operatorname{enc}(0)=110,\qquad\operatorname{enc}(1)=1110,$
by defining the following formula
\[\begin{split}
  \operatorname{Succ}(x,y):= {}&x<y\wedge\neg\exists z\,(x<z<y),\\
  L(x):={}&\exists u\bigl(C_1(u)\wedge\operatorname{Succ}(u,x)\bigr),\\  R(x):={}&\exists v\bigl(C_1(v)\wedge\operatorname{Succ}(x,v)\bigr),\\
  Q_D(x):={}&C_1(x)\wedge\neg L(x)\wedge\neg R(x),\\
  Q_0(x):={}&C_1(x)\wedge\neg R(x)\wedge\exists u\bigl(C_1(u)\wedge\operatorname{Succ}(u,x)\wedge\neg L(u)\bigr),\\
  Q_1(x):={}&C_1(x)\wedge\exists u\bigl(C_1(u)\wedge\operatorname{Succ}(u,x)\wedge\neg L(u)\bigr)\\
  &{}\wedge\exists v\bigl(C_1(v)\wedge\operatorname{Succ}(x,v)\wedge\neg R(v)\bigr).
\end{split}\]

Thus the color predicates are replaced by $Q_0$ and $Q_1$, while the delimiter predicate is replaced by $Q_D$. These formulas have quantifier rank at most $3$, so the interpretation increases the quantifier rank by at most $3$.

\begin{lemma}[Binary construction]\label{lem:binary-markers}
    For every $q\geq1$, there is a binary colored ordinal $\mathfrak W$ with $\operatorname{otp}(W)\geqslant \omega^{|\Sigma_2(q)|}$ such that every binary colored ordinal $\mathfrak B\equiv_{q+7}\mathfrak W$ has order type at least $\omega^{|\Sigma_2(q)|}$.
\end{lemma}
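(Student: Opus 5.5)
The plan is to transport the construction of Lemma~\ref{lem:marker-hierarchy} to two colors through the encoding $\mathtt d=10$, $\operatorname{enc}(0)=110$, $\operatorname{enc}(1)=1110$. In Lemma~\ref{lem:marker-hierarchy} we take $d=3$: two data colors and the delimiter $C_2$. Then $|\Sigma_{d-1}(q)|=|\Sigma_2(q)|$, and we obtain a $3$-colored ordinal $\mathfrak V=\mathfrak V_s$ with $s=|\Sigma_2(q)|$ that satisfies $\Phi_s$, a sentence of rank at most $q+4$. Replace each point of $\mathfrak V$ by its code word: a delimiter point by $\mathtt d$, and a data point of color $i$ by $\operatorname{enc}(i)$. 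Call the resulting binary colored ordinal $\mathfrak W$. Since each word is finite and $\omega^j$ absorbs finite multiples on the left, $\operatorname{otp}(W)=\operatorname{otp}(V)\geq\omega^s$.

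Next, form the translated sentence $\Phi_s^{*}$. Replace $C_2$ by $Q_D$, and $C_0,C_1$ by $Q_0,Q_1$. Relativize every quantifier to the \emph{anchor} points $Q_D\vee Q_0\vee Q_1$. Each anchor is the last $1$ in a block, or the second $1$ in the case of $1110$, so each code word carries exactly one anchor. The relativization guard and the substituted predicates add at most $3$ to the rank, so $\Phi_s^{*}$ has rank at most $q+7$. We must check that the anchors, ordered by $<$ and colored by $Q_D,Q_0,Q_1$, reproduce $\mathfrak V$. The $Q$-formulas inspect only the immediate neighbours, and inside a word like $1110$ the middle $1$ satisfies $Q_1$ while the other $1$'s satisfy none of the three, so the check is local. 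This gives $\mathfrak W\models\Phi_s^{*}$.

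Now let $\mathfrak B\equiv_{q+7}\mathfrak W$. Then $\mathfrak B\models\Phi_s^{*}$, so the anchor substructure $\mathfrak B^{*}$ is a $3$-colored ordinal satisfying $\Phi_s$. One subtlety is that $\Phi_s$ contains the relativized $\sigma_i$, whose meaning in $\mathfrak B^{*}$ must be read on the anchor colored ordinal itself. Relativizing the whole sentence handles this uniformly, so $\mathfrak B^{*}\models\Phi_s$. The argument at the end of Lemma~\ref{lem:marker-hierarchy} uses only $\Phi_s$ and the iterated limit-point sets $\Delta_j$, and it shows that $\operatorname{otp}(B^{*})\geq\omega^s$. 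Since $B^{*}\subseteq B$, we get $\operatorname{otp}(B)\geq\omega^s$.

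The step I expect to be delicate is the rank bookkeeping. The exact bound $q+7$ depends on how the relativization guard $Q_D\vee Q_0\vee Q_1$ (itself of rank $3$) interacts with the nesting in $\Phi_s$. I would first try to place the guard at each quantifier so that it adds only the rank of the $Q$-formulas once along any branch, as the remark ``increases the quantifier rank by at most $3$'' suggests. A second check is that $Q_D,Q_0,Q_1$ stay pairwise disjoint on arbitrary binary ordinals, not only on well-formed encodings. This should follow directly from their definitions, since $Q_D$ excludes $L$, $Q_0$ excludes $R$, and $Q_1$ requires $R$.
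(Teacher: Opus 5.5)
Your proposal is correct and follows the paper's intended route. The paper proves this lemma only by ``Same as the normal construction'': it takes the $d=3$ case of Lemma~\ref{lem:marker-hierarchy} and pulls $\Phi_s$ back through the code words $\mathtt d,\operatorname{enc}(0),\operatorname{enc}(1)$ via $Q_D,Q_0,Q_1$, which is exactly what you do. The rank step you flag as delicate is routine: relativize only the quantifiers of $\Phi_s$ (not those inside the $Q$-formulas), so each becomes $\exists x\,(A(x)\wedge\psi^*)$ of rank $1+\max(3,\operatorname{rk}\psi^*)$, and by induction the translation adds at most $3$, giving $\Phi_s^*$ rank at most $q+7$.
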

\begin{proof}
  Same as the normal construction.
\end{proof}

In summary, we have:

\begin{theorem}[Lower bounds]\label{thm:lower-bounds}
    The following estimates hold:
    $$g_{\mathrm{ord}}(d,q)\geq\omega^{|\Sigma_{d-1} (q-4)|}\qquad(d\geq2,\ q\geq5),$$
    $$g_{\mathrm{ord}}(d,q)\geq\omega^{|\Sigma_2 (q-7)|}\qquad(d\geq2,\ q\geq8).$$
\end{theorem}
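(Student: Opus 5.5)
The plan is to derive both inequalities directly from the two constructions, Lemma~\ref{lem:marker-hierarchy} and Lemma~\ref{lem:binary-markers}, by unwinding the definition of $g_{\mathrm{ord}}$. Recall that $g_{\mathrm{ord}}(d,q)$ is the maximum, over all $q$-equivalence classes $\sigma\in\Sigma_d(q)$, of the least order type of a model of $\sigma$. It therefore suffices, for each inequality, to exhibit one $d$-colored ordinal $\mathfrak V$ whose $q$-equivalence class has no member of order type below the stated bound. Taking $\sigma$ to be the characteristic of $\mathfrak V$ then gives $\min\{\operatorname{otp}(A):\mathfrak A\models\sigma\}\ge$ the bound, and the maximum is at least this.

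\textbf{First inequality.} Let $d\geq2$ and $q\geq5$, and put $q'=q-4\geq1$. Apply Lemma~\ref{lem:marker-hierarchy} with $q'$ to obtain a $d$-colored ordinal $\mathfrak V$ such that every $\mathfrak B\equiv_{q'+4}\mathfrak V$ satisfies $\operatorname{otp}(B)\geq\omega^{|\Sigma_{d-1}(q')|}$. Since $q'+4=q$, this says that the $q$-class of $\mathfrak V$ has minimal order type at least $\omega^{|\Sigma_{d-1}(q-4)|}$. Note that this class is one of the members of $\Sigma_d(q)$, being represented by a characteristic sentence, so the first bound follows.

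\textbf{Second inequality.} Let $q\geq8$ and put $q'=q-7\geq1$. Lemma~\ref{lem:binary-markers} supplies a binary colored ordinal $\mathfrak W$ whose $(q'+7)$-class, that is, whose $q$-class, has all members of order type at least $\omega^{|\Sigma_2(q-7)|}$. This gives the bound for $d=2$. For general $d\geq2$ we must pass from $2$ colors to $d$ colors. The plan is to regard $\mathfrak W$ as a $d$-colored ordinal in which the colors $C_2,\dots,C_{d-1}$ are empty; this is allowed, since empty color classes are permitted. Now let $\mathfrak B$ be any $d$-colored ordinal with $\mathfrak B\equiv_q\mathfrak W$. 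The sentence $\forall x\,(C_0(x)\vee C_1(x))$ has rank $1\le q$, so it transfers to $\mathfrak B$. Hence $\mathfrak B$ is essentially binary, and its binary reduct is $q$-equivalent to $\mathfrak W$, because restricting to fewer predicates preserves EF-game winning strategies. Therefore $\operatorname{otp}(B)\geq\omega^{|\Sigma_2(q-7)|}$.

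\textbf{Main obstacle.} The only delicate point is this reduction from $d$ colors to $2$ colors. Two things need checking: that the paper's convention of disjoint colors covering $A$ is compatible with leaving some colors empty (it is, by the remark about empty color classes), and that monotonicity of $\equiv_q$ under reducts holds, which is immediate from the game characterization. Everything else is index bookkeeping: we need $q-4\ge1$ and $q-7\ge1$ so that the lemmas apply, which explains the hypotheses $q\geq5$ and $q\geq8$.
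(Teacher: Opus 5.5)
Your proposal is correct and follows the paper. The paper states the theorem as a direct summary: it instantiates Lemma~\ref{lem:marker-hierarchy} at rank $q-4$ and Lemma~\ref{lem:binary-markers} at rank $q-7$, then reads off the definition of $g_{\mathrm{ord}}$, exactly as you do. Your padding of $\mathfrak W$ with empty colors $C_2,\dots,C_{d-1}$, using the rank-$1$ sentence $\forall x\,(C_0(x)\vee C_1(x))$ and passage to reducts, correctly supplies the step from $2$ to $d$ colors that the paper leaves implicit. For $d\geq3$ the second bound also follows from the first, since $|\Sigma_{d-1}(q-4)|\geq|\Sigma_2(q-4)|\geq|\Sigma_2(q-7)|$.
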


\subsection{Tower-exponential growth}

We first make the upper bound uniform. An FCO $(q-1)$-character has three components: a color, a left $(q-1)$-equivalence class, and a right $(q-1)$-equivalence class. Hence the number $k$ of realized $(q-1)$-characters in Theorem~\ref{thm:refined-bound} is at most $d|\Sigma_d(q-1)|^2$. Corollary~\ref{thm:improved-bound} therefore gives, for $q\geq2$,
\begin{equation}
    g_{\mathrm{ord}}(d,q)\leq\omega^{d|\Sigma_d(q-1)|^2-2^{q-1}+1}.
    \label{eq:uniform-upper}
\end{equation}

By Lemma~\ref{lem:colored-facts}(\ref{itm:colored-character}), each $(q+1)$-equivalence class is determined by the set of $q$-characters it realizes. There are at most $d|\Sigma_d(q)|^2$ possible $q$-characters, and therefore $|\Sigma_d(q+1)|\leq2^{d|\Sigma_d(q)|^2}$. Hence the exponent in \eqref{eq:uniform-upper} grows at most tower-exponentially with $q$.

Now we characterize the growth of the lower bound.

\begin{lemma}\label{lem:finite-word-count}
    For $d,q\geq1$, $|\Sigma_d (q)|\geq\sum_{a=0}^{2^q-1}d^a$.
\end{lemma}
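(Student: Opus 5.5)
The plan is to exhibit, for each $a\le 2^q-1$, the $d^a$ finite $d$-colored words of length $a$. I will show that these $\sum_{a=0}^{2^q-1}d^a$ structures are pairwise $q$-inequivalent; that gives the stated lower bound on $|\Sigma_d(q)|$. Empty domains are allowed, so the empty word accounts for the term $a=0$. The argument has two parts. First, two words of different lengths $a<b$ with $a\le 2^q-1$ are separated. Second, two distinct words of the same length $a\le 2^q-1$ are separated.

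For the length part, I will use the classical fact that finite linear orders of lengths $a\ne b$ with $\min(a,b)<2^q-1$ are $q$-inequivalent; this is the content of Lemma~\ref{lem:small-chain}. Colors only make Spoiler's task easier, because forgetting them turns a winning strategy for Duplicator in the colored game into one in the uncolored game. The boundary case $a=2^q-1$ needs care. Lengths $2^q-1$ and $2^q$ are in fact $q$-equivalent as pure orders, so the count must exploit colors or allow one collision there. I expect to handle this by noting that only one longer length collides. The cleanest route is to restrict the claim to lengths $a\le 2^q-1$ and compare only within that range. There $b\le 2^q-1$ as well, so $a<b$ forces $a<2^q-1$ and Lemma~\ref{lem:small-chain} applies. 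Inequivalence with words longer than $2^q-1$ is not needed for counting distinct classes among our chosen representatives.

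For the same-length part, I will show that equal-length words $u\ne v$ of length $a\le 2^q-1$ are distinguished in $q$ rounds. I will do this by induction on $q$ using a halving strategy. Let $i$ be a position where $u(i)\ne v(i)$. Spoiler plays the middle point $m$ of $u$, and Duplicator must answer with some point $m'$ of $v$. If $m'$ is not at the same index as $m$, the prefixes or suffixes have different lengths, both at most $2^{q-1}-1$. The previous paragraph, applied with $q-1$, then lets Spoiler win in the remaining rounds inside the relevant side, since play can be relativized to one side of the chosen points. If $m'=m$ and the colors differ, Spoiler has already won. Otherwise $u$ and $v$ differ on one side, of length at most $2^{q-1}-1$, and the induction hypothesis for $q-1$ applies to that side. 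In every case we use the standard fact that a winning Spoiler strategy on the interval below (or above) a chosen pair of points lifts to the whole structures. The base case $q=1$ means $a\le1$, and one move separates the two colors.

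The main obstacle is the bookkeeping of the two inductions together. The same-length argument calls on the different-length statement at rank $q-1$, while the different-length statement must be reproved relativized to intervals. I would run a single induction on $q$ proving both claims simultaneously for words of length at most $2^q-1$. Pigeonholing the words into distinct classes then gives the lower bound $\sum_{a=0}^{2^q-1}d^a$.
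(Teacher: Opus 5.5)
Your proposal is correct and follows essentially the paper's route. You separate different lengths by the pure-order bound of Lemma~\ref{lem:small-chain}, separate distinct equal-length words by the halving EF strategy, and count the $d^a$ words of each length $a\le 2^q-1$. Your decision to compare only lengths inside $[0,2^q-1]$ is the right formulation of the length step, since the paper's wording (``no order of length $a\le 2^q-1$ is $q$-equivalent to an order of a different length'') fails at $a=2^q-1$, for example for monochromatic words of lengths $2^q-1$ and $2^q$.
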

\begin{proof}
    The estimate follows from the standard EF argument: colored orders of different lengths $a,b\leq2^q-1$ are distinguishable in $q$ rounds. Thus no order of length $a\leq2^q-1$ is $q$-equivalent to an order of a different length. Following the idea in Lemma~\ref{lem:small-chain} gives a win in the remaining $q-1$ rounds. There are $d^a$ colorings of length $a$, and summing over $0\leq a\leq 2^q-1$ proves the claim.
\end{proof}

\begin{lemma}[The growth]\label{lem:binary-powerset}
    For every $q\geq1$, $|\Sigma_2 (q+5)|\geq2^{|\Sigma_2 (q)|}$.
\end{lemma}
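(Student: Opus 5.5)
We need $|\Sigma_2(q+5)|\ge 2^{|\Sigma_2(q)|}$. The plan is to build, for every subset $S\subseteq\Sigma_2(q)$, a binary colored ordinal $\mathfrak W_S$, and to show that $\mathfrak W_S\not\equiv_{q+5}\mathfrak W_{S'}$ whenever $S\neq S'$. This gives $2^{|\Sigma_2(q)|}$ pairwise distinct $(q+5)$-classes.

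\emph{Construction.} Fix representatives $\mathfrak U_\sigma\models\sigma$ for $\sigma\in\Sigma_2(q)$, each a binary colored ordinal, possibly empty. We need a delimiter that is locally recognizable and that the blocks cannot imitate. The plan is to pad each block by interleaving: replace every point of color $i$ in $\mathfrak U_\sigma$ by the two-point word $0\,i$. The delimiter is then the word $11$, which never occurs inside a padded block. Writing $\hat{\mathfrak U}_\sigma$ for the padded block, put
$$\mathfrak W_S=\sum_{\sigma\in S}\bigl(11+\hat{\mathfrak U}_\sigma\bigr)+11,$$
with the sum taken in some fixed order.

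\emph{Definability.} A delimiter start is a point $x$ with $C_1(x)$, $C_1$ at its successor, and not $C_1$ at its predecessor (or no predecessor). This is quantifier-free up to the successor relation and costs at most $2$ extra quantifiers. Relativizing to the open interval between the end of one delimiter and the start of the next, the original structure $\mathfrak U_\sigma$ is interpreted on the even positions of the padded block. A point $y$ is in the domain iff its immediate predecessor has color $0$ and does not start a delimiter. Defining that predecessor costs one quantifier inside the relativization. So the formula "$x$ starts a delimiter followed by a block satisfying $\sigma$" has rank about $q+4$. The sentence $\exists x\,P_\sigma(x)$ then has rank $q+5$, and $\mathfrak W_S\models\exists x\,P_\sigma(x)$ iff $\sigma\in S$. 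Hence distinct $S$ yield $(q+5)$-inequivalent structures.

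\emph{Main obstacle.} The hard step is the quantifier bookkeeping, which must land on exactly $+5$. Three costs have to be accounted for. The "next delimiter" clause needs an $\exists y$ bounding the block and a $\forall z$ asserting that no delimiter lies in between; this already takes $+2$, as in the proof of Lemma~\ref{lem:marker-hierarchy}. Recognizing a delimiter needs successor quantifiers. The relativized padding decoding needs extra quantifiers as well.

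To save rank, the first thing to try is simplifying the encoding. Use the delimiter as the single color-$1$ run of length $\ge2$, and for empty $\mathfrak U_\sigma$ just use two adjacent delimiters. The decoding of each original point is then a single predicate ``even position''. It can be replaced by taking the block's points to be the color-$0$ padding points, with the color read off from the successor. This needs only one quantifier, shared with the delimiter check. The limit positions of padded blocks also need care: after a limit, the first point must be padding, which holds automatically because $\omega\cdot 2$-interleaving keeps the pattern.

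If the count still exceeds $5$, the fallback is to absorb one quantifier by noting that $\exists x\exists y$ can be merged with the bounding-delimiter quantifier. This matches the analogous accounting in Lemmas~\ref{lem:marker-hierarchy} and~\ref{lem:binary-markers}.
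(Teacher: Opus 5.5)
Your overall plan is the paper's: one delimited concatenation of encoded representatives for each $S$, told apart by the sentences $\exists x\,P_\sigma(x)$. The gap is the encoding. It is not decodable by bounded-rank formulas, so the claim that $\mathfrak W_S\models\exists x\,P_\sigma(x)$ iff $\sigma\in S$ fails.

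\textbf{The domain rule.} With the padding $i\mapsto 0\,i$, color $0$ marks both padding and data. Your rule (immediate predecessor of color $0$) therefore also admits every padding point that follows a data point of color $0$. For example, the block $00$ pads to $0000$ and decodes to three points. Your alternative, taking the color-$0$ padding points, has the same defect. No local rule can repair this: inside a stretch of zeros, separating padding from data is a parity condition. In an all-zero block of type $\omega$, for instance, the data points are exactly the odd positions.

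\textbf{The delimiter test.} Suppose $\mathfrak U_\sigma$ has a last point of color $1$. That point and the following delimiter form the run $111$, and your test selects the data point, which truncates the block. For the empty class, the two delimiters merge into a single run $1111$, so the empty block is never detected.

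\textbf{The rank count.} You leave it unfinished (``about $q+4$'', followed by an unspecified merging of quantifiers).

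\textbf{The missing idea.} You need a self-delimiting code in which every delimiter and every original point is marked by one point singled out by a bounded local pattern. The paper uses $\mathtt d=10$, $\operatorname{enc}(0)=110$, $\operatorname{enc}(1)=1110$.
\begin{itemize}
\item Each code word is a maximal run of $1$s closed by a $0$. A point at a limit position has no immediate predecessor, so it automatically starts a run.
\item The run length $1$, $2$ or $3$ distinguishes delimiter, color $0$ and color $1$. The formulas $Q_D,Q_0,Q_1$, of rank at most $3$, pick out exactly one point per code word.
\item An empty block is simply $\mathtt d\mathtt d=1010$, i.e.\ two separate delimiters, so it is detected.
\end{itemize}
The rank count is then exact:
\begin{itemize}
\item Relativizing $\sigma$ to the $(Q_0\vee Q_1)$-points between consecutive $Q_D$-points, with each $C_j$ replaced by $Q_j$, gives rank $q+3$.
\item The clause $\forall z\,(x<z<y\to\neg Q_D(z))$ has rank $3$.
\item The quantifier $\exists y$ brings the rank to $q+4$, and $\exists x$ brings it to exactly $q+5$.
\end{itemize}
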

\begin{proof}
Let $s=|\Sigma_2(q)|$, and choose a representative colored ordinal $\mathfrak U_i$ from each $q$-equivalence class. For each $S=\{i_1<\cdots<i_k\}\subseteq\{1,\dots,s\}$, define
$E_S=\mathtt d\operatorname{enc}(\mathfrak U_{i_1})\mathtt d\cdots\mathtt d\operatorname{enc}(\mathfrak U_{i_k})\mathtt d$
and $E_\varnothing=\mathtt d$. The resulting structures are distinguished by sentences of rank at most $q+5$: $\mathfrak E_S\models\exists x\,\widehat P_i(x)$ if and only if $i\in S$, where $\widehat P_i(x)$ expresses that the block following $x$ encodes a representative of the class of $\mathfrak U_i$.
\end{proof}

As Lemma~\ref{lem:binary-powerset} also ensures that the lower bound a tower-exponential growth, the upper and lower bounds for $g_{\mathrm{ord}}(d,q)$ both have the height of the tower linear in $q$. This means that, although the upper bound may not be optimal, as Mwesigye and Truss note at the end of the article, the same growth level as the lower bound suggests that substantially sharpening it may be difficult.

\subsection{An exact computation algorithm}

The bounds in Theorem~\ref{thm:lower-bounds} and \eqref{eq:uniform-upper} admit an exact finite refinement. For $\sigma,\tau\in\Sigma_d(q)$, choose colored ordinals $\mathfrak A,\mathfrak B$ satisfying them. Concatenation and $\omega$-fold repetition induce
$$\sigma\oplus\tau\equiv\sigma_{\mathfrak A+\mathfrak B}^q,\qquad\Omega(\sigma)\equiv\sigma_{\mathfrak A\cdot\omega}^q,
$$
where the left sides denote the chosen logically equivalent members of $\Sigma_d(q)$. These operations are well defined by Lemma~\ref{lem:colored-facts}(\ref{itm:colored-sum}).

\begin{lemma}[Effective finite characteristic algebra]\label{lem:effective-characteristic-algebra}
    For fixed $d$ and $q$, the members of $\Sigma_d(q)$ have canonical finite codes with decidable equality, and the operations $\oplus$ and $\Omega$ are effectively computable on those codes.
\end{lemma}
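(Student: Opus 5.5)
We plan to encode each $q$-equivalence class by a canonical finite witness: a colored ordinal of order type below $\omega^\omega$ written in a finite syntax. By Theorem~\ref{thm:refined-bound} and \eqref{eq:uniform-upper}, every class in $\Sigma_d(q)$ has a representative of order type at most $\omega^{N}$, where $N=N(d,q)$ is an explicitly computable number. We would show that such a representative may be taken to be a \emph{finitely presented} colored ordinal: a term built from single colored points using finite concatenation $+$ and $\omega$-repetition $(\cdot)\cdot\omega$, of nesting depth at most $N$. Since only finitely many $q$-classes exist, we prove this by induction on $q$ via characters. By Lemma~\ref{lem:colored-facts}(\ref{itm:colored-sum}), replacing each maximal homogeneous interval from the decomposition into the blocks $J_{i,j}$ (as in the proof of Theorem~\ref{thm:refined-bound}) by $\omega$-powers of a periodic concatenation of characters does not change the $q$-class. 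This is the usual additive-Ramsey argument (Theorem~\ref{theorem: Additivity Ramsey}) applied on cofinal limits. Terms of bounded depth and bounded length then form a finite, effectively enumerable set $\mathcal T_{d,q}$ that covers every class.

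The key step is to decide $\equiv_q$ between two terms in $\mathcal T_{d,q}$; this gives decidable equality of codes. We do it by recursion on $q$, computing the set of $(q-1)$-characters of each term and applying Lemma~\ref{lem:colored-facts}(\ref{itm:colored-character}). For a term $t$, a point of $t$ lies in some occurrence of a leaf. Its left and right parts are again terms of the same kind: prefixes and suffixes of $s\cdot\omega$ are $s\cdot m+(\text{prefix of } s)$ and $(\text{suffix of } s)+s\cdot\omega$. Infinitely many $m$ appear, but by Lemma~\ref{lem:colored-facts}(\ref{itm:colored-sum}) together with the finiteness of $\Sigma_d(q-1)$, the classes of $s\cdot m$ are eventually periodic in $m$. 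Iterating $\oplus$ on the finite set of codes detects this periodicity effectively. Hence the set of $(q-1)$-characters of $t$ is a computable finite set, and $\equiv_q$ is decidable. At $q=0$ there is nothing to compute, since the structures are sentences-free up to emptiness and the color content of a single point.

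With these tools, $\oplus$ and $\Omega$ are computable. The canonical code of a class is the least element of $\mathcal T_{d,q}$ in a fixed enumeration that lies in the class. Given codes $t,u$, we form the terms $t+u$ and $t\cdot\omega$. We then search $\mathcal T_{d,q}$ for the least term $q$-equivalent to each, using the decision procedure above. Lemma~\ref{lem:colored-facts}(\ref{itm:colored-sum}) guarantees that the result does not depend on the chosen representatives.

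We expect the main obstacle to be the first step: showing that every class has a representative in the bounded term language, rather than just some ordinal of bounded order type. The colorings of $\omega^N$ are uncountable in number and not finitely described, so we would need the Ramsey/limit-cutting machinery (Lemmas~\ref{lem:cutting} and~\ref{lem:limit-cutting}) to make each block periodic. We would first try to obtain this via the finite ordinal-profile scheme, Theorem~\ref{lem:finite-profile-scheme}. Applied to the characteristic sentence, it yields a finite automaton whose accepting runs of length $<\omega^{N+1}$ can be pumped into ultimately periodic, that is term-definable, runs.
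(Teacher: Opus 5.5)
Your argument works, but it takes a heavier route than the paper. The paper uses the character codes themselves as the codes: $\mathbf c_0=*$ and
$$\mathbf c_{r+1}(\mathfrak A)=\bigl(\mathbf c_r(\mathfrak A),\{(i,\mathbf c_r(\mathfrak A_{<a}),\mathbf c_r(\mathfrak A_{>a})):a\in A\}\bigr).$$
By the character criterion, equality of codes is then literal syntactic equality. The operations $\oplus_{r+1}$ and $\Omega_{r+1}$ are given by closed formulas in terms of $\oplus_r$, $\Omega_r$, and the orbit $\operatorname{Orb}_r(s)$ of $\mathbf c_r(\mathbf 0)$ under $p\mapsto p\oplus_r s$.

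Your Step~2 is exactly this recursion: left parts $s\cdot m+\text{prefix}$, right parts $\text{suffix}+s\cdot\omega$, and eventual periodicity of the classes of $s\cdot m$. The difference is that you use its output only as a subroutine to decide $\equiv_q$ between witness terms, whereas the paper uses that output as the code. Choosing least witness terms as codes forces you to prove first that every class contains a term. That is the content of the paper's separate Lemma~\ref{lem:rational-form}, which the paper needs only later, for Theorem~\ref{thm:exact-algorithm}. Your route gives every code an actual realizing structure, and the exact algorithm stores such witnesses anyway. The paper's route makes this lemma independent of Ramsey's theorem, of the refined bound, and of automata.

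If you keep your route, three corrections apply.

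First, the finiteness of $\mathcal T_{d,q}$ is unnecessary, and your bound on term length is not justified. Since $t+u$ and $t\cdot\omega$ are themselves terms, searching an enumeration of all terms for the least one $q$-equivalent to them terminates anyway.

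Second, the existence of term witnesses is best done as in Lemma~\ref{lem:rational-form}. Argue by induction on countable order type. At a limit $\lambda$, choose a cofinal $\omega$-sequence and color pairs by the rank-$q$ class of the interval between them. An infinite homogeneous set then gives $\mathfrak A\equiv_q\mathfrak P+\mathfrak E\cdot\omega$ with $\mathfrak P,\mathfrak E$ of smaller order type. Replacing the blocks $J_{i,j}$ by ``$\omega$-powers of a periodic concatenation of characters'' is not a well-defined operation as stated, and the profile-scheme pumping is not needed. Your reduction to order type at most $\omega^N$ via Theorem~\ref{thm:refined-bound} is what legitimately makes all cofinalities countable, so that $\omega$-repetition suffices. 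Downward L\"owenheim--Skolem, as in the paper, would also do this.

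Third, at rank $0$, in this relational language without constants, every $d$-colored ordinal (including the empty one) has the same characteristic, so $\Sigma_d(0)$ is a singleton. Emptiness and colors are first detected at rank $1$.
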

\begin{proof}
    The codes represent the characteristic sentences defined in Section~\ref{sec:preliminaries}. During the recursion, write $\oplus_r$ and $\Omega_r$ for the operations on rank-$r$ codes. Put $\mathbf c_0(\mathfrak A)=*$ and
    $$\mathbf c_{r+1}(\mathfrak A)=\left(\mathbf c_r(\mathfrak A),\left\{\bigl(i,\mathbf c_r(\mathfrak A_{<a}),\mathbf c_r(\mathfrak A_{>a})\bigr):a\in A\right\}\right).$$
    
    At each rank there are only finitely many possible codes, and Lemma~\ref{lem:colored-facts}(\ref{itm:colored-character}) shows that $\mathbf c_r(\mathfrak A)=\mathbf c_r(\mathfrak B)$ exactly when $\mathfrak A\equiv_r\mathfrak B$.

    Suppose $\mathbf c_{r+1}(\mathfrak A)=(s,\mathcal C)$ and $\mathbf c_{r+1}(\mathfrak B)=(t,\mathcal D)$. Once the rank-$r$ addition table is known, the second component of the code for $\mathfrak A+\mathfrak B$ is
    $$\{(i,\ell,u\oplus_r t):(i,\ell,u)\in\mathcal C\}\cup\{(i,s\oplus_r\ell,u):(i,\ell,u)\in\mathcal D\}.$$
    
    This computes $\oplus_{r+1}$ recursively.

    For $\Omega_{r+1}$, let $\mathbf 0$ be the empty colored order, put $p_0=\mathbf c_r(\mathbf 0)$, and $p_{j+1}=p_j\oplus_r s$, and let $\operatorname{Orb}_r(s)=\{p_j:j<\omega\}$. The orbit is effectively found by iteration until its first repetition. If $w=\Omega_r(s)$, then the second component of the code for $\mathfrak A\cdot\omega$ is
    $$\{(i,p\oplus_r\ell,u\oplus_r w):(i,\ell,u)\in\mathcal C,\ p\in\operatorname{Orb}_r(s)\}.$$
    
    Together with its rank-$r$ projection $w$, this is $\mathbf c_{r+1}(\mathfrak A\cdot\omega)$. Starting at rank zero therefore computes both operations and makes equality a comparison of finite codes.
\end{proof}

We now give an algorithm which can compute the exact value of $g_{ord}$.

\begin{lemma}[Length-preserving rational form]\label{lem:rational-form}
    Every $d$-colored ordinal $\mathfrak A$ has a $q$-equivalent representative $\mathfrak R$ given by a finite term
    $$\mathsf R::=\varnothing\mid c_i\mid\mathsf R+\mathsf R\mid \mathsf R^\omega$$
    where $c_i$ is the one-point order of color $i$, and whose order type is at most $\operatorname{otp}(A)$.
\end{lemma}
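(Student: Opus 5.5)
We prove the statement by transfinite induction on $\operatorname{otp}(A)$, working throughout modulo $\equiv_q$. By Lemma~\ref{lem:colored-facts}(\ref{itm:colored-sum}), $q$-equivalent pieces may be swapped inside sums, and we may also swap the summand of an $\omega$-fold repetition. So it suffices to show that every $\mathfrak A$ is $q$-equivalent to a term of order type at most $\operatorname{otp}(A)$.

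The empty and one-point cases are the terms $\varnothing$ and $c_i$. If $\operatorname{otp}(A)=\beta+1$, write $\mathfrak A=\mathfrak A_{<a}+c_i$ with $a$ the last point. Apply the induction hypothesis to $\mathfrak A_{<a}$ to get $\mathsf R_0$ with $\operatorname{otp}(\mathsf R_0)\le\beta$, and take $\mathsf R_0+c_i$. This is $q$-equivalent to $\mathfrak A$ by the sum theorem, and its order type is at most $\beta+1$.

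The main case is $\operatorname{otp}(A)=\lambda$ a limit. Choose a cofinal $\omega$-sequence when $\cf(\lambda)=\omega$. When $\cf(\lambda)>\omega$, use Theorem~\ref{theorem: Additivity Ramsey} on the additive coloring $f(x,y)=\mathbf c_q(\mathfrak A\restriction[x,y))$ to get an unbounded homogeneous set $J$. Additivity comes from Lemma~\ref{lem:colored-facts}(\ref{itm:colored-sum}) and Lemma~\ref{lem:effective-characteristic-algebra}. Let $j_0<j_1<\cdots$ be the first $\omega$ elements of $J$, and set $\sigma=f(j_0,j_1)=f(j_m,j_{m+1})$ for all $m$. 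Homogeneity also gives $\sigma\oplus\sigma=\sigma$, because $f(j_0,j_2)=\sigma$. We then claim
$$\mathfrak A\equiv_q\mathfrak A_{<j_0}+\mathfrak A\restriction[j_0,j_1)\cdot\omega .$$
This is the step I expect to be the main obstacle. The right-hand side is an $\omega$-sum of blocks of type $\sigma$. The left-hand side is a $J$-indexed sum of blocks of type $\sigma$, so it equals $\mathfrak A_{<j_0}$ followed by an $\operatorname{otp}(J)$-sum of such blocks. What is needed is that an idempotent $\sigma$ summed along any limit ordinal of cofinality greater than $\omega$ has the same $q$-type as its $\omega$-sum. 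I would first try to prove this by an EF argument, using Lemma~\ref{lem:colored-facts}(\ref{itm:colored-character}) with idempotency. Every $(q-1)$-character realized in either sum should have the form $(i,\,p\oplus\ell,\,u\oplus\Omega(\sigma))$, where $(i,\ell,u)$ is a character inside the block and $p$ ranges over the orbit of $\sigma$. Computing the right component requires knowing that a tail of the long sum again has type $\Omega(\sigma)$. I would establish that by an inner induction on $q$, replaying the $\Omega$ computation of Lemma~\ref{lem:effective-characteristic-algebra}. If that fails, the fallback is to use the Ramsey set differently: pick the homogeneous set inside a closed unbounded set, and reduce by Lemma~\ref{lem:limit-cutting}. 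The $\cf(\lambda)=\omega$ case uses the same scheme, with Ramsey's theorem for $\omega$ applied to the additive coloring.

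Finally, apply the induction hypothesis to $\mathfrak A_{<j_0}$ and to $\mathfrak A\restriction[j_0,j_1)$, obtaining terms $\mathsf R_0$ and $\mathsf R_1$ whose order types are at most those of the pieces. Both pieces have order type below $\lambda$, since $j_1<\lambda$. Output $\mathsf R_0+\mathsf R_1^\omega$. Its order type is at most $\operatorname{otp}(A_{<j_0})+\operatorname{otp}([j_0,j_1))\cdot\omega$. This is at most $\lambda$ because $j_0<j_1<\cdots$ lie in $\lambda$, so $j_0+\operatorname{otp}([j_0,j_1))\cdot m\le j_m$ for every $m$ under the order-type count of the consecutive blocks. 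Monotonicity of ordinal addition and multiplication then gives the bound, which completes the induction.
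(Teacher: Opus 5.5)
You follow the paper in the successor case, and your $\cf(\lambda)=\omega$ case uses the same Ramsey scheme (apart from the length problem in the second paragraph). The case $\cf(\lambda)>\omega$ has a genuine gap.

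\textbf{The principle you say you need is false.} Let $L$ have color $1$ and order type $\omega^\omega$, and put $E=(c_0+L)\cdot N$ with $N\ge 2^q-1$. Since $N\equiv_q 2N$, an index-level EF game gives $E+E\equiv_q E$, so $\sigma$ is idempotent. Now compare $E\cdot\omega_1$ with $E\cdot\omega$. In $E\cdot\omega_1$, the $C_0$-point opening the $\omega$-th copy is not the least $C_0$-point and has no immediate $C_0$-predecessor. In $E\cdot\omega$ no such point exists. This is a rank-$3$ sentence, so the two sums differ for $q\ge3$.

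Your character count breaks exactly here. In a long sum, the left components include the types of $E\cdot\xi$ for limit $\xi$, for example $\Omega(\sigma)$. These need not lie in the orbit $\{\mathbf c(\mathbf 0),\sigma\}$.

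There is a second problem with the decomposition itself. $\mathfrak A\mathbin{\upharpoonright}[j_0,\lambda)$ is not an $\operatorname{otp}(J)$-indexed sum of the blocks $[j_\xi,j_{\xi+1})$ unless $J$ is closed. At each limit $\xi$ the gap $[\sup_{\eta<\xi}j_\eta,j_\xi)$ is left out.

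Your target equivalence is nevertheless true, but only because $f(j,j')=\sigma$ for \emph{all} pairs in $J$, not just consecutive ones. Write $t_r(j)$ for the rank-$r$ type of $\mathfrak A\mathbin{\upharpoonright}[j,\lambda)$.
\begin{itemize}
\item For $x\ge j$, pick $j'\in J$ above $x$. The $r$-character of $x$ in $[j,\lambda)$ is $(i,\ell,u\oplus t_r(j'))$.
\item Here $(i,\ell,u)$ is an $r$-character realized in every model of $\sigma$, because $[j,j')\models\sigma$.
\item Idempotency makes that set of characters closed under $\ell\mapsto\sigma\oplus\ell$.
\end{itemize}
Induction on $r\le q$, simultaneously for all $j\in J$, then gives that $t_r(j)$ is the rank-$r$ reduct of $\Omega(\sigma)$. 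This works for any cofinality and needs no closedness.

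The paper avoids all of this. For uncountable $A$, it uses downward L\"owenheim--Skolem to pass to a countable elementary substructure, whose order type is at most $\operatorname{otp}(A)$. Then every limit has cofinality $\omega$. The Ramsey subsequence is cofinal, so $\mathfrak A=\mathfrak P_0+\sum_m\mathfrak B_m$ exactly.

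\textbf{The order-type bound is also wrong as written.} Homogeneous blocks share a $q$-type but not a length, so $j_0+\operatorname{otp}([j_0,j_1))\cdot m\le j_m$ can fail. Take $\lambda=\omega^5+\omega^4$ monochromatic and $q=2$. Then $j_0=0$, $j_1=\omega^5$, $j_{m+1}=\omega^5+\omega^3\cdot m$ is homogeneous, since all nonempty limit ordinals are $\equiv_2$. The induction hypothesis may return a term of length $\omega^5$ for the first block, and $\omega^5\cdot\omega=\omega^6>\lambda$.

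The fix is to apply the induction hypothesis to a block of least order type among the $\mathfrak B_m$; all of them satisfy $\sigma$. Then $\operatorname{otp}(R_1)\cdot\omega\le\sum_m\operatorname{otp}(B_m)$. This is exactly the paper's requirement that $\operatorname{otp}(E)\le\operatorname{otp}(B_j)$ for every $j$.
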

\begin{proof}
If $A$ is uncountable, the downward L\"owenheim--Skolem theorem gives a countable elementary substructure $\mathfrak B\preccurlyeq\mathfrak A$. So it suffices to consider countable ordinals. We argue by induction on the order type. The empty case is represented by $\varnothing$, and the successor case is obtained by adjoining the final colored point.

Suppose that $\operatorname{otp}(A)=\lambda$ is a countable limit ordinal, and choose cuts $0=\gamma_0<\gamma_1<\cdots$ cofinal in $\lambda$. Color each pair $\{i,j\}$, with $i<j$, by the rank-$q$ characteristic of $\mathfrak A\mathbin{\upharpoonright}[\gamma_i,\gamma_j)$. Infinite Ramsey's theorem gives homogeneous indices $h_0<h_1<\cdots$. Write
\[
\mathfrak A=\mathfrak P_0+\mathfrak B_0+\mathfrak B_1+\cdots,
\]
 Lemma~\ref{lem:colored-facts}(\ref{itm:colored-sum}) gives
\[
\mathfrak A\equiv_q\mathfrak P+\mathfrak E^\omega.
\]
Moreover, $\operatorname{otp}(E)\leq\operatorname{otp}(B_j)$ for every $j$, and hence
$$
\operatorname{otp}(P)+\operatorname{otp}(E)\cdot\omega
\leq\operatorname{otp}(P_0)+\sum_{j<\omega}\operatorname{otp}(B_j)
=\operatorname{otp}(A).
$$
\end{proof}

\begin{theorem}[Exact finite algorithm]\label{thm:exact-algorithm}
    For every $d\geq1$ and $q<\omega$, the exact value $g_{\mathrm{ord}}(d,q)$ is computable by a terminating finite algorithm.
\end{theorem}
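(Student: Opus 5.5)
The plan is to compute, for every class $\sigma\in\Sigma_d(q)$, the least order type of a representative. We do this by a finite fixed-point computation over the characteristic algebra of Lemma~\ref{lem:effective-characteristic-algebra}, with order types tracked in Cantor normal form. By \eqref{eq:uniform-upper}, every class has a representative of order type at most $\omega^{N}$, where $N=d|\Sigma_d(q-1)|^2$ is a computable number. So all candidate minima lie in the computable set of ordinals below $\omega^{N+1}$, written in Cantor normal form with exponents at most $N$. These ordinals are infinitely many, but ordinal addition and multiplication by $\omega$ are effective on them.

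Next we invoke Lemma~\ref{lem:rational-form}. Every class $\sigma$ contains a representative given by a rational term $\mathsf R$, and the order type of that representative does not exceed the minimum. Hence $\min\{\operatorname{otp}(A):\mathfrak A\models\sigma\}$ equals the minimum of $\operatorname{otp}(\mathsf R)$ over rational terms $\mathsf R$ whose code is $\sigma$. Both the code and the order type of a term are computed compositionally: the code via $\oplus$ and $\Omega$, and the order type via $\operatorname{otp}(\mathsf R+\mathsf S)=\operatorname{otp}(\mathsf R)+\operatorname{otp}(\mathsf S)$ and $\operatorname{otp}(\mathsf R^\omega)=\operatorname{otp}(\mathsf R)\cdot\omega$.

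With this in hand we run a Dijkstra/Knuth-style least-cost closure over the finite set $\Sigma_d(q)$. The initial values are $m(\mathbf c_q(\varnothing))=0$ and $m(\mathbf c_q(c_i))=1$, and every other class starts at $\infty$. We then repeatedly update
\[
m(\sigma\oplus\tau)\leftarrow\min\bigl(m(\sigma\oplus\tau),\,m(\sigma)+m(\tau)\bigr),\qquad m(\Omega(\sigma))\leftarrow\min\bigl(m(\Omega(\sigma)),\,m(\sigma)\cdot\omega\bigr).
\]
Termination rests on two facts. First, every value that ever appears is the order type of an actual representative, so after the first finite assignment it is bounded by $\omega^{N}$ once we cap values at the bound. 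Second, each update strictly decreases an ordinal, and ordinals are well-founded, so each of the finitely many entries changes only finitely often. The final value $m(\sigma)$ equals the minimal term cost, because any optimal term is built bottom-up from subterms whose costs are at least the recorded minima. Monotonicity of $+$ and $\cdot\omega$ in each argument justifies replacing a subterm by a cheaper one of the same class. Finally we output $g_{\mathrm{ord}}(d,q)=\max_\sigma m(\sigma)$.

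The main obstacle is making termination \emph{effective}. Well-foundedness alone gives termination, but we also need a stopping test. I would use the standard argument that the process stabilizes once a full pass over all pairs $(\sigma,\tau)$ and all $\sigma$ produces no change; this test is decidable since the table is finite. A subtle point is that the right argument of ordinal addition is not monotone for strict decrease, since $1+\omega=\omega$. This is harmless for correctness, because we only need $\leq$-monotonicity, which holds. A second subtlety is that Lemma~\ref{lem:rational-form} requires the term's order type to be at most $\operatorname{otp}(A)$ for the minimal $\mathfrak A$; that is exactly how it is stated, so no extra work is needed there.
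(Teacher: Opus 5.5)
Your proposal is correct and is essentially the paper's proof. You close the finite code algebra of Lemma~\ref{lem:effective-characteristic-algebra} under $\oplus$ and $\Omega$, relax ordinal labels until no strict decrease occurs (with termination by well-foundedness), and get optimality from Lemma~\ref{lem:rational-form} together with induction on rational terms using only weak monotonicity. Small points: the cap at $\omega^N$ is unnecessary (and \eqref{eq:uniform-upper} is only stated for $q\geq2$); at $q=0$ the empty and one-point atoms share one state, which must be initialized to the minimum $0$; and it is the left argument of $+$ (as in $0+\omega=1+\omega$), not the right, that fails strict monotonicity.
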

\begin{proof}
    Start with the characteristics of the empty order and the $d$ one-point orders, and close under $\oplus$ and $\Omega$, using the codes from Lemma~\ref{lem:effective-characteristic-algebra}. Store a witnessing rational term for every state. The work queue terminates because only finitely many rank-$q$ codes exist, and Lemma~\ref{lem:rational-form} shows that the resulting closure is exactly $\Sigma_d(q)$.

    Let $\ell(\sigma)$ denote the current ordinal label of a state. Initialize each state with the order type of its stored witnessing term; if several initial atoms have the same state, take their least length. In particular, at rank zero the common state receives the empty witness of length $0$; taking the minimum discards the one-point length $1$. Fairly scan all pairs and all states, performing every strict relaxation
    $$\ell(\sigma\oplus\tau)\leftarrow\min\bigl(\ell(\sigma\oplus\tau),\ell(\sigma)+\ell(\tau)\bigr),$$
    $$\ell(\Omega(\sigma))\leftarrow\min\bigl(\ell(\Omega(\sigma)),\ell(\sigma)\cdot\omega\bigr).$$
    
    Every label carries a witnessing term. The order type of a finite rational term lies below $\omega^\omega$ and is represented effectively in Cantor normal form.  If the scan did not terminate, one of the finitely many states would undergo infinitely many strict decreases, contradicting the well-foundedness of the ordinals.

    At termination, witnesses show that the least order type of a representative of $\sigma$ is at most $\ell(\sigma)$. Induction on rational terms, using the two stable inequalities, gives $\ell(\sigma)\leq\operatorname{otp}(R)$ for every term $\mathsf R$ whose value has domain $R$ and satisfies $\sigma$. Lemma~\ref{lem:rational-form} gives the reverse inequality. Hence
    $$g_{\mathrm{ord}}(d,q)=\max_{\sigma\in\Sigma_d(q)}\ell(\sigma).$$
\end{proof}

We note that computing these exact values is difficult in practice, maybe like Ramsey numbers. For example, computing $g_{\mathrm{ord}}(2,3)$ requires traversing a space of approximately $2^{80,802}$ states. The exact values in the first two ranks are $g_{\mathrm{ord}}(d,1)=d$ and $g_{\mathrm{ord}}(d,2)=\omega\cdot d$.

\section{Realizability of FCO characters}\label{sec:realizability}

In Section~\ref{sec:EFbound} we established that every $q$-characteristic of a finitely colored ordinal is realized on some ordinal whose order type does not exceed the effectively computable bound $g_{\mathrm{ord}}(d,q)<\omega^\omega$, and that this bound grows tower-exponentially with $q$.  This provides an absolute upper bound on the length of the shortest witness of a given character, but it leaves open the finer question: 

\emph{For a given ordinal $\alpha$ and a $q$-characteristic $\sigma$, does there exist a $d$-colored ordinal of order type exactly $\alpha$ that satisfies $\sigma$?}

Equivalently, we wish to determine the set of all $q$-characteristics for each ordinal $\alpha$ that are realizable on $\alpha$, and, conversely, to describe the structural properties of the ordinals that can host a given $\sigma$.  In particular, we are interested in the decision problem: given $q<\omega$, two ordinals $\theta_1<\theta_2$, and $\sigma\in\Sigma_d^+(q)$, can we effectively decide whether $\sigma$ is realized on some ordinal $\geq\theta_1$ and $<\theta_2$?  The purpose of this section is to answer all these questions positively, by developing a uniform combinatorial analysis of the realizability sets of FCO characters.

We can prove the following basic realizability theorem immediately from big theorems we have introduced.

\begin{theorem}\label{lem:interval-realizabiliy}
    Given $\theta_1<\theta_2<\omega^\omega$, $q<\omega$, and $\sigma\in\Sigma_d^+(q)$, there is an effective procedure deciding whether $\sigma$ is realizable on a $d$-colored ordinal of order type $\alpha$ for some $\theta_1\leq\alpha<\theta_2$.
\end{theorem}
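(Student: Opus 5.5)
We reduce the question to the decidability of the monadic second-order theory of a countable ordinal, Theorem~\ref{theorem:Decidability}. Fix $d$, $q$ and $\sigma\in\Sigma_d^+(q)$, given by its canonical code from Lemma~\ref{lem:effective-characteristic-algebra}. Since $\theta_2<\omega^\omega$, the bounds $\theta_1,\theta_2$ come in Cantor normal form with natural-number coefficients. The first step is to write an MSO sentence $\Psi$ over the pure order $(\omega^\omega,<)$ which says the following: there are a position $x$ and sets $C_0,\dots,C_{d-1}$ partitioning the initial segment $[0,x)$ such that $\theta_1\leq x<\theta_2$ and the colored structure $([0,x),<,C_0,\dots,C_{d-1})$ satisfies $\sigma$.

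The ingredients of $\Psi$ are as follows.
\begin{enumerate}
    \item The characteristic $\sigma$ is a first-order sentence in the colored language. Relativizing its quantifiers to $\{y:y<x\}$ and reading $C_i$ as second-order variables gives an MSO formula.
    \item By Theorem~\ref{theorem:DMT}, each ordinal $\theta<\omega^\omega$ is first-order definable in ordinals by $\phi_\theta$. Since $x<\omega^\omega$, the formula $\phi_{\theta}(x)$ evaluated inside $(\omega^\omega,<)$ pins down the element $x=\theta$. Hence the constraint $\theta_1\leq x<\theta_2$ can be expressed as $\exists u\,\exists v\,(\phi_{\theta_1}(u)\wedge\phi_{\theta_2}(v)\wedge u\leq x<v)$.
\end{enumerate}
The ambient structure $(\omega^\omega,<)$ suffices: every candidate order type $\alpha$ lies in $[\theta_1,\theta_2)\subseteq\omega^\omega$, so $\alpha$ occurs as the initial segment $[0,\alpha)$. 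Conversely, any witness $x$ of $\Psi$ yields a $d$-colored ordinal of type $x$ realizing $\sigma$.

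Since $\omega^\omega<\omega_2$, Theorem~\ref{theorem:Decidability} says that $\mathrm{MSO}(\omega^\omega,<)$ is decidable. The procedure is therefore: compute $\Psi$ effectively from $(\theta_1,\theta_2,q,\sigma)$, then run the decision procedure on it. The sentence $\Psi$ depends effectively on the input, because $\phi_\theta$ is built uniformly from the Cantor normal form of $\theta$. The natural route is to define $\omega^k$-positions by iterating ``limit of limits'' as in the $\Delta_j$ construction of Lemma~\ref{lem:marker-hierarchy}, and to define successors by $\operatorname{Succ}$.

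An alternative, avoiding the big theorem, is to use Theorem~\ref{lem:finite-profile-scheme}. Build the ordinal automaton $\mathcal A_\sigma$ and decide whether it accepts some run of length $\alpha\in[\theta_1,\theta_2)$ by a finite state-reachability computation over the Cantor normal form. One would track, for each $\omega^k$-block, the set of pairs (entry state, exit limit-support) that are achievable. I would keep this as a backup.

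The main obstacle is the uniform effective construction of the defining formulas $\phi_\theta$ with their correct relativization inside $\omega^\omega$. One must check that $\phi_\theta(x)$ holds exactly at the point $x=\theta$, and not merely that it holds in an ordinal elementarily equivalent to $\theta$. I would first try to make this explicit by induction on the Cantor normal form: $\omega^k\cdot c+\dots$ is the $c$-th element of $\Delta_k$ followed by a definable offset. Here $\Delta_k$ is first-order definable for $k$ fixed.
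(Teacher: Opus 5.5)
Your proposal is correct and follows the paper's proof. Both express ``some $\alpha\in[\theta_1,\theta_2)$ carries a $d$-coloring satisfying $\sigma$'' as an MSO sentence over one fixed ordinal in which $\theta_1,\theta_2$ are first-order definable, and then invoke Theorem~\ref{theorem:Decidability}. The only difference is that you take $(\omega^\omega,<)$ as the ambient structure where the paper uses $(\omega_1,<)$; both lie below $\omega_2$, so either choice works, and the automaton backup is not needed.
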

\begin{proof}
    The ordinals $\theta_1$ and $\theta_2$ are first-order definable in $(\omega_1,<)$, and the stated question is expressible by an MSO sentence over this structure. The conclusion follows from the decidability of its MSO theory.
\end{proof}

We may wish a finer and recursive answer on all ordinals, instead of this effective but uninteresting proof restricted to countable ordinals.

\subsection{Universal ordinals and cofinal stretching}

As we have noticed before, for every $q<\omega$ there is an additively indecomposable ordinal $b_q<\omega^\omega$ such that every FCO is $q$-elementarily equivalent to a FCO with order type $<b_q$; in fact, we may let $b_q$ be the least additively indecomposable ordinal that is strictly greater than $g_{\mathrm{ord}}(d,q)$.  The existence of such $b_q$ follows immediately from Corollary~\ref{thm:improved-bound} together with the fact that the ordinals below $\omega^\omega$ are well‑ordered.

Fix any regular cardinal $\kappa$ (not necessarily uncountable; so $\kappa$ might be $\omega$).  The definition of a ``universal block'' must guarantee that the block is long enough to realize all $q$‑characters that could possibly occur in the tail of a large colored ordinal.  This leads to the following inductive definition.

An additively indecomposable ordinal $\theta$ is called \emph{$q$-universal $\omega$-cofinal} if $\cf(\theta)=\omega$ and $\theta\geq b_q$. An ordinal $\gamma$ is called \emph{$(q+1)$-universal $\kappa$-cofinal} if
\begin{itemize}
    \item $\cf(\gamma)=\kappa$,
    \item for every $\delta<\gamma$ and every positive integer $M$ there exists a strictly increasing cofinal sequence $\lambda_0<\lambda_1<\dots<\lambda_\xi<\dots\;\;(\xi<\kappa)$ with $\lambda_0>\delta$ such that each interval $[\lambda_\xi,\lambda_{\xi+1})$ can be written as a concatenation of $M$ consecutive $q$-universal $\omega$-cofinal ordinals.
\end{itemize}

Thus a $q$-universal $\omega$-cofinal ordinal is an additively indecomposable limit of $\omega$ whose length is at least $b_q$, and a $(q+1)$-universal $\kappa$-cofinal ordinal is one that can be decomposed into sufficiently many such blocks. Note that for $q=0$ we have $b_0=1$, so every additively indecomposable ordinal of cofinality $\omega$ is $0$-universal $\omega$-cofinal.

The following lemma shows that every ordinal that is closed under adding any ordinal $<\omega^\omega$ is already $q$-universal for all $q$.

\begin{lemma}[Universality of $\omega^\omega$-like ordinals]\label{lem:universal-large}
    Let $\gamma\equiv\omega^\omega$ and $\kappa=\cf(\gamma)$. Then $\gamma$ is $q$-universal $\kappa$-cofinal for every $q<\omega$.
\end{lemma}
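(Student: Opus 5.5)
The plan is to unwind $\gamma\equiv\omega^\omega$ via Theorem~\ref{theorem:DMT}. The Cantor normal form of $\gamma$ has no terms below $\omega^\omega$, so $\gamma=\omega^\omega\cdot\eta$ with $\eta>0$. Write $\gamma=\rho+\omega^{\varepsilon}$ with $\varepsilon\geq\omega$ the last exponent, so that $\kappa=\cf(\gamma)=\cf(\omega^\varepsilon)$. The base level is the definition of $0$-universality; for $q\geq1$ I will verify the defining clause of $q$-universal $\kappa$-cofinal with blocks that are $(q-1)$-universal $\omega$-cofinal. The building blocks will be powers $\omega^{\beta}$ with $\beta\geq\omega$ a \emph{successor} ordinal. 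Such $\omega^\beta$ is additively indecomposable, has $\cf(\omega^\beta)=\omega$, and satisfies $\omega^\beta\geq\omega^\omega>g_{\mathrm{ord}}(d,q-1)$, hence $\omega^\beta\geq b_{q-1}$ by the choice of $b_{q-1}<\omega^\omega$ via Corollary~\ref{thm:improved-bound}. So every such block is $(q-1)$-universal $\omega$-cofinal, independently of $q$; this is why one argument serves all $q$.

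Given $\delta<\gamma$ and $M$, I will fix a strictly increasing cofinal sequence $(\mu_\xi)_{\xi<\kappa}$ in $\gamma$ above $\max(\delta,\rho)$. I will then define $\lambda_\xi$ by recursion: $\lambda_0=\mu_0+1$, $\lambda_\zeta=\sup_{\xi<\zeta}\lambda_\xi$ at limits, and
$$\lambda_{\xi+1}=\lambda_\xi+\omega^{\beta_\xi}\cdot M,$$
where $\beta_\xi$ is a successor ordinal with $\omega\leq\beta_\xi<\varepsilon$ chosen large enough that $\lambda_{\xi+1}>\mu_\xi$. Each interval $[\lambda_\xi,\lambda_{\xi+1})$ is then literally $M$ consecutive blocks $\omega^{\beta_\xi}$, and cofinality follows from $\lambda_{\xi+1}>\mu_\xi$. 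Staying below $\gamma$ is automatic: for $\lambda\in(\rho,\gamma)$ and $\beta<\varepsilon$ we have $\lambda+\omega^\beta\cdot M<\rho+\omega^\varepsilon=\gamma$. Limit stages below $\kappa$ stay below $\gamma$ by regularity of $\kappa$.

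The case analysis is on $\varepsilon$. If $\varepsilon$ is a limit, successor exponents $\beta<\varepsilon$ are cofinal in $\varepsilon$, so $\omega^{\beta}$ eventually exceeds any $\mu_\xi$ and the choice of $\beta_\xi$ is possible. If $\varepsilon=\varepsilon'+1$ with $\varepsilon'$ a successor, or $\varepsilon'=\omega$, then $\kappa=\omega$. Here I take $\mu_n=\rho+\omega^{\varepsilon'}\cdot n$ and $\beta_n=\varepsilon'$, and each jump of $\omega^{\varepsilon'}\cdot M$ passes the next target.

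The main obstacle is the remaining case $\varepsilon=\varepsilon'+1$ with $\varepsilon'$ a limit of uncountable cofinality, for example $\gamma=\omega^{\omega_1+1}$. Then $\kappa=\omega$, but crossing a boundary $\rho+\omega^{\varepsilon'}\cdot k$ requires a block of length at least $\omega^{\varepsilon'}$, and $\cf(\omega^{\varepsilon'})>\omega$. My first attempt will be to cross the boundary with a block $\omega^{\beta}$, $\beta=\varepsilon'+1$ a successor, placed so that it starts exactly at a multiple $\rho+\omega^{\varepsilon'}\cdot k$. I would then need to check that this stays below $\gamma$; it fails when $\varepsilon=\varepsilon'+1$. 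If that fails, I will relax the recursion so that the $\lambda_\xi$ need not be continuous, since the definition only constrains the successor intervals. I would then let each $\lambda_{\xi+1}$ overshoot into the next $\omega^{\varepsilon'}$-segment using $M$ successor-exponent blocks starting from a point just past the boundary. If even this is blocked, this case has to be excluded or the definition of universal block adjusted.
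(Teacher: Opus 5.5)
Your exact-tiling recursion $\lambda_{\xi+1}=\lambda_\xi+\omega^{\beta_\xi}\cdot M$, with growing exponents, is the right way to meet the definition literally. The proposal still does not prove the lemma: the case $\gamma=\rho+\omega^{\varepsilon'+1}$ with $\cf(\varepsilon')>\omega$ is left open, and neither fallback can work. Here $\kappa=\omega$, so there are no limit stages and dropping continuity changes nothing. Moreover, $[\lambda_n,\lambda_{n+1})$ must be tiled starting at $\lambda_n$, the end of the previous interval, so blocks cannot begin ``just past the boundary''.

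In fact this case cannot be closed at all. Take $\delta\geq\rho$, any admissible $\lambda_0<\lambda_1<\cdots$, and $k$ with $\lambda_0<b:=\rho+\omega^{\varepsilon'}\cdot k$. By cofinality some interval $[\lambda_n,\lambda_{n+1})$, and hence some block $[c,c')$ of its tiling, has $c<b\leq c'$. Then $\operatorname{otp}[c,b)=\omega^{\varepsilon'}\cdot j$ for some $j\geq1$. On the other hand, $\rho<c<c'<\gamma$ gives $\operatorname{otp}[c,c')<\omega^{\varepsilon'+1}$. The only additively indecomposable ordinal in $[\omega^{\varepsilon'},\omega^{\varepsilon'+1})$ is $\omega^{\varepsilon'}$, whose cofinality is $\cf(\varepsilon')>\omega$.

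So for $q\geq1$ the defining sequence clause fails for every $\gamma$ of this shape. Examples are your $\omega^{\omega_1+1}$, or $\omega^{\omega_1+1}\cdot2$; the latter is not additively indecomposable, so the block-level definition of ``$q$-universal $\omega$-cofinal'' does not apply to it. Both are $\equiv\omega^\omega$ with cofinality $\omega$. With the definitions as written, the lemma is false, and your last resort, adjusting the definition, is the correct conclusion.

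The paper's proof does not get past this point; it overlooks it. It only arranges $\lambda_\xi+\omega^{m+1}\cdot M\leq\lambda_{\xi+1}$. It then asserts that an interval of order type at least $\omega^{m+1}\cdot M$ splits into $M$ consecutive blocks of type $\omega^{m+1}$. That is false already for type $\omega^{m+1}\cdot M+1$, and for the straddling intervals above.

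Demanding exact tilings by blocks of the fixed type $\omega^{m+1}$ would not help either. For $\gamma=\omega^\omega\cdot2$ and $\delta\geq\omega^\omega$, such a sequence has supremum $\lambda_0+\omega^{m+2}<\gamma$; this is exactly why your exponents must grow. What the paper's argument does establish is a weaker property: each interval \emph{begins with} $M$ consecutive universal $\omega$-cofinal blocks. Under that reading your hard case is immediate: take $\lambda_n=\rho+\omega^{\varepsilon'}\cdot(n+k)$ for large $k$.

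There are two smaller slips in the cases you do treat.
\begin{enumerate}
\item When $\varepsilon=\omega$ (in particular for $\gamma=\omega^\omega$ itself) there is no successor $\beta$ with $\omega\leq\beta<\varepsilon$. You need finite exponents with $\omega^\beta\geq b_{q-1}$, which makes the blocks depend on $q$, as the paper's $\omega^{m+1}$ do.
\item The admissibility condition on exponents should be $\cf(\omega^\beta)=\omega$ (that is, $\beta$ a successor or $\cf(\beta)=\omega$), not ``successor''. You already use $\beta_n=\omega$. With this condition, the case of $\varepsilon'$ a limit of countable cofinality other than $\omega$ (e.g.\ $\varepsilon'=\omega\cdot2$), which your case list omits, is handled by $\beta_n=\varepsilon'$.
\end{enumerate}
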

\begin{proof}
    Fix $q$ and let $b=b_q$.  Since $b<\omega^\omega$, we can write $b=\omega^m$ for some finite $m$. Given $\delta<\gamma$ and $M$, we must produce the required cofinal sequence. For every $\alpha<\gamma$, we have $\alpha+\omega^{m+1}\cdot M < \gamma$. Hence we can construct by transfinite recursion a strictly increasing continuous cofinal sequence $(\lambda_\xi)_{\xi<\kappa}$ such that $\lambda_0>\delta$ and, for every $\xi$, $\lambda_\xi+\omega^{m+1}\cdot M\leq\lambda_{\xi+1}$. Now each interval $[\lambda_\xi,\lambda_{\xi+1})$ of order type at least $\omega^{m+1}\cdot M$ can be split into $M$ consecutive blocks of order type $\omega^{m+1}$.  Every block of order type $\omega^{m+1}$ is additively indecomposable, has cofinality $\omega$, and is $\ge b$. Thus it is a $q$-universal $\omega$-cofinal ordinal. The sequence $(\lambda_\xi)_{\xi<\kappa}$ therefore witnesses that $\gamma$ is $(q+1)$-universal $\kappa$-cofinal. Since $q$ was arbitrary, the lemma is proved for all $q\ge 1$; the case $q=0$ is immediate.
\end{proof}

\begin{theorem}[Cofinal stretching]\label{lem:cofinal-stretching}
    Suppose that $\gamma_1,\gamma_2$ are ordinals $\equiv\omega^\omega$, and $\cf(\gamma_1)=\cf(\gamma_2)=\kappa$. For any $q<\omega$ and any $d$-colored ordinal $\mathfrak{A}$ of order type $\gamma_1$ there exists a $d$-colored ordinal $\mathfrak{B}$ of order type $\gamma_2$ such that $\mathfrak{A}\equiv_q\mathfrak{B}$.
\end{theorem}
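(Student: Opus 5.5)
We would prove the statement by a block decomposition argument along a cofinal sequence, followed by an additive Ramsey homogenization. The idea is to replace $\mathfrak A$ by a $q$-equivalent structure of a canonical shape, namely an initial part followed by a $\kappa$-sequence of identical short blocks. That shape can then be re-laid along $\gamma_2$.

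\textbf{Step 1: decompose and homogenize.} Fix a continuous strictly increasing cofinal sequence $(\mu_\xi)_{\xi<\kappa}$ in $\gamma_1$ with $\mu_0=0$. Color each pair $\xi<\eta<\kappa$ by the rank-$q$ characteristic of $\mathfrak A\mathbin{\upharpoonright}[\mu_\xi,\mu_\eta)$. By Lemma~\ref{lem:colored-facts}(\ref{itm:colored-sum}) this coloring is additive, so Theorem~\ref{theorem: Additivity Ramsey} gives an unbounded homogeneous set $J\subseteq\kappa$. Since $\kappa$ is regular, $J$ has order type $\kappa$; enumerate it as $(j_\zeta)_{\zeta<\kappa}$.

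\textbf{Step 2: read off the canonical shape.} We obtain
\[
\mathfrak A=\mathfrak P+\sum_{\zeta<\kappa}\mathfrak E_\zeta,\qquad \mathfrak E_\zeta=\mathfrak A\mathbin{\upharpoonright}[\mu_{j_\zeta},\mu_{j_{\zeta+1}}).
\]
Each $\mathfrak E_\zeta$ satisfies the same characteristic $\sigma$, and $\mathfrak P$ satisfies some characteristic $\pi$. By the remark preceding Lemma~\ref{lem:universal-large}, we may choose FCOs $\mathfrak P'\models\pi$ and $\mathfrak E'\models\sigma$ of order type $<b_q<\omega^\omega$. By Lemma~\ref{lem:colored-facts}(\ref{itm:colored-sum}),
\[
\mathfrak A\equiv_q\mathfrak P'+\sum_{\zeta<\kappa}\mathfrak E'.
\]
This reduces the problem to realizing the structure on the right with order type exactly $\gamma_2$.

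\textbf{Step 3: pad the blocks to fit $\gamma_2$.} The sum $\mathfrak P'+\mathfrak E'\cdot\kappa$ has order type $\operatorname{otp}(P')+\operatorname{otp}(E')\cdot\kappa$, which is generally not $\gamma_2$. To fix this, we use Lemma~\ref{lem:universal-large}, or directly the fact that $\gamma_2\equiv\omega^\omega$ has cofinality $\kappa$. This gives a cofinal partition
\[
\gamma_2=\nu_0+\sum_{\zeta<\kappa}\tau_\zeta,
\]
where every summand has order type $\geq\omega^{m+1}$ for some large $m$. For each summand we need an FCO of exactly that order type satisfying the prescribed characteristic: $\pi$ for the first summand and $\sigma$ for the others. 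The summand order types may be chosen of the form $\omega^{m+1}\cdot\beta$, or in general as ordinals with Cantor normal form tail above $\omega^{m+1}$.

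\textbf{The main obstacle} is producing, for an arbitrary characteristic and an arbitrary long order type, a realization of exactly that type; this is a local exact stretching. Our first attempt is to arrange the summands $\tau_\zeta$ to all be a single fixed order type $\theta$, and then to stretch the blocks themselves. We would iterate the Ramsey argument inside $\mathfrak E$: take a homogeneous decomposition $\mathfrak E'\equiv_q\mathfrak P''+\mathfrak F^\omega$ via Lemma~\ref{lem:rational-form}. Inserting extra copies of $\mathfrak F$ in the $\omega$-tail does not change the $q$-class, since $\mathfrak F^\omega\equiv_q\mathfrak F^{\omega}\cdot\beta$ for idempotent $\mathfrak F$ by homogeneity and the sum theorem. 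This lets us lengthen blocks by multiples of $\operatorname{otp}(F)\cdot\omega$.

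If this direct padding fails to hit $\gamma_2$ exactly, we fall back on a different idea. Instead of making every block exact, we absorb the total discrepancy into the cofinal sequence. We choose the $\tau_\zeta$ as $\operatorname{otp}(E')+\rho_\zeta$, where the padding $\rho_\zeta$ is made idempotent-absorbable. Because $\gamma_2\equiv\omega^\omega$, every $\alpha<\gamma_2$ admits $\alpha+\omega^{m+1}<\gamma_2$. Hence the continuity points of a $\kappa$-sequence of such blocks can be matched to a club in $\gamma_2$. At limit stages $\zeta$, continuity ensures nothing is left over. Since $\gamma_2$ is a limit of such blocks, the total order type is exactly $\gamma_2$.
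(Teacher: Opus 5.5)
Your Steps 1--2 are correct when $\kappa=\omega$. For uncountable $\kappa$, however, the decomposition $\mathfrak A=\mathfrak P+\sum_{\zeta<\kappa}\mathfrak E_\zeta$ is false.

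\textbf{The limit-stage gap.} The additive Ramsey theorem gives only an \emph{unbounded} homogeneous $J$. At a limit $\zeta$ with $j_\zeta>\sup_{\zeta'<\zeta}j_{\zeta'}$, the segment $[\sup_{\zeta'<\zeta}\mu_{j_{\zeta'}},\mu_{j_\zeta})$ lies in no block, so the sum lemma does not apply. You also cannot demand a closed $J$. If $C_1$ is stationary and co-stationary in $\omega_1$, homogeneity (for $q\geq2$) puts all $\mu_j$, $j\in J$, into one color class, so $J$ is never closed.

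The conclusion itself fails, not just the derivation. Homogeneity gives $\mathfrak A\mathbin{\upharpoonright}[\mu_{j_0},\mu_{j_\omega})\models\sigma$. In your model the corresponding segment is $\mathfrak E'\cdot\omega$, which satisfies $\Omega(\sigma)$, and nothing forces $\Omega(\sigma)=\sigma$.

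Here is a concrete case. Take $\mathfrak A=(\omega_1,C_1)$ with $C_1=\{\omega^2\cdot\xi:\xi\geq1\}$, $\mu_\xi=\omega\cdot\xi$, and $J=\{\omega^\omega\cdot\zeta+1:\zeta\geq1\}$.
\begin{enumerate}
\item Each block $[\omega^\omega\zeta+\omega,\omega^\omega\zeta'+\omega)$ is a copy of $\omega^\omega\cdot\delta+\omega$. In relative position, its $C_1$-points are exactly the nonzero multiples of $\omega^2$.
\item Since $\omega^\omega\cdot\delta\equiv\omega^\omega$, all blocks are elementarily equivalent, so $J$ is homogeneous for every $q$.
\item Thus $\sigma$ says the least point is in $C_0$ and that limit points occur.
\item So in $\mathfrak P'+\sum_{\zeta<\omega_1}\mathfrak E'$, the first point of the copy indexed by $\omega$ is a limit of limits colored $C_0$.
\item But $\mathfrak A$ satisfies the rank-$5$ sentence ``every limit of limits lies in $C_1$''. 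Hence $\mathfrak A\not\equiv_q\mathfrak P'+\sum_{\zeta<\omega_1}\mathfrak E'$ for $q\geq5$.
\end{enumerate}
This is exactly the club obstruction the paper describes after Theorem~\ref{lem:exact-stretching}.

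\textbf{Step 3 is not carried out.} Even for $\kappa=\omega$, the step you rightly call the main obstacle is left open. Take $\gamma_1=\omega^\omega$ and $\gamma_2=\omega^{\omega+1}$. Every block of $\mathfrak A$ has order type $<\omega^\omega$, while infinitely many $\tau_\zeta$ must be $\geq\omega^\omega$. So you must realize $\sigma$ on genuinely longer ordinals, which is essentially the statement being proved.

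Your padding identity $\mathfrak F^\omega\equiv_q\mathfrak F^\omega\cdot\beta$ for idempotent $\mathfrak F$ is false. A monochromatic finite word $\mathfrak F$ of length $2^q$ is $q$-idempotent. Yet for $q\geq3$, $\mathfrak F^\omega\cong\omega$ and $\mathfrak F^\omega\cdot2\cong\omega\cdot2$ are separated by a rank-$3$ sentence. Your fallback paragraph only balances order types; it never shows that the padded blocks still satisfy $\sigma$.

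\textbf{The paper's route.} The paper does not decompose by additive Ramsey at all.
\begin{itemize}
\item Using Lemma~\ref{lem:universal-large}, it cuts both $\gamma_1$ and $\gamma_2$ into intervals of $2^{q+3}-1$ universal $\omega$-cofinal blocks.
\item Each block is long enough to host every cofinal $(q+3)$-character.
\item It copies a finite template into these blocks.
\item It then argues directly with an EF game, answering in the middle untouched block, rather than through the sum lemma.
\end{itemize}
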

\begin{proof}
    Let $r = q+3$ and $M = 2^r-1$. By Lemma~\ref{lem:universal-large} the ordinals $\gamma_1,\gamma_2$ are $(r+1)$-universal $\kappa$-cofinal; in particular they are $r$-universal $\kappa$-cofinal as well (one may increase the block size if necessary).  We shall use the $r$-universal $\omega$-cofinal blocks provided by this definition, each of which has order type at least $b_r$ and therefore can accommodate all $r$-characteristics that could occur cofinally in a large ordinal.

    Consider the finite set $D$ of all $r$-characters that occur cofinally in $\mathfrak{A}$. Choose $\delta_0<\gamma_1$ so large that every $r$-character occurring above $\delta_0$ belongs to $D$ and is already cofinal.  Because $\gamma_1$ is $(r+1)$-universal $\kappa$-cofinal, there exists a cofinal sequence $(\lambda_\xi)_{\xi<\kappa}$ with $\lambda_0>\delta_0$ such that each $I_\xi = [\lambda_\xi,\lambda_{\xi+1})$ is a concatenation of $M$ consecutive $r$-universal $\omega$-cofinal blocks.

    Inside each of these $M$ blocks, choose a finite set of representatives for every character in $D$ (this is possible because the block length is at least $b_r$). Together with the endpoints $\lambda_\xi,\lambda_{\xi+1}$ and the $r$-types of the intervals cut out by these representatives, these choices determine a finite ``template''. There are only finitely many possible templates, so by the regularity of $\kappa$ we can thin out the sequence so that \emph{every} $I_\xi$ carries the same template. Write
    $$\mathfrak{A} = P + \sum_{\xi<\kappa} H_\xi,\qquad P = \mathfrak{A}\restriction\lambda_0,\;H_\xi = \mathfrak{A}\restriction I_\xi.$$

    Choose a cofinal sequence $(\mu_\xi)_{\xi<\kappa}$ in $\gamma_2$ such that each $J_\xi = [\mu_\xi,\mu_{\xi+1})$ is again a concatenation of $M$ consecutive $r$-universal $\omega$-cofinal blocks, and $\mu_0$ is taken large enough so that $\mu_0 >\operatorname{otp}(P)$.  Replace $P$ by an $r$-equivalent colored ordinal $P^*$ of order type $<\omega^\omega$ (possible by the boundedness theorem) and embed it into the initial segment $[0,\mu_0)$ of $\gamma_2$; the tail of $\gamma_2$ beyond $P^*$ still has order type $\gamma_2$.

    Now we replicate the template in each $J_\xi$. Inside every $r$-universal $\omega$-cofinal block of $J_\xi$, we place colored points according to the fixed template of the $H_\xi$, using the fact that the block is long enough to realize all characters in $D$. The endpoints of the blocks receive the same $r$-characters as their counterparts. This produces a colored ordinal $H'_\xi$ that is $r$-equivalent to $H_\xi$. Set
    $$\mathfrak{B} = P^* + \sum_{\xi<\kappa} H'_\xi.$$
    Clearly $\mathfrak{B}$ has order type $\gamma_2$.

    We describe a winning strategy for $\exists$ in the $r$-round EF game between $\mathfrak{A}$ and $\mathfrak{B}$. The invariant maintained after $k$ rounds ($0\le k\le r$) is:
    \begin{quote}
        The two finite tuples can be embedded into isomorphic finite linear orders of ``chunks'' with the following properties:
        \begin{itemize}
            \item the chunks on the two sides are either the initial parts $P,P^*$, or intervals inside corresponding $H_\xi$, $H'_\xi$ that are $m$-equivalent for the remaining rank $m = r-k$;
            \item inside any chunk that has not been fully explored, there remain at least $2^m-1$ completely untouched $r$-universal $\omega$-cofinal blocks on each side of the played points.
        \end{itemize}
    \end{quote}
    Initially the invariant holds with $k=0$ and $M=2^r-1$ blocks in every $H_\xi$. Suppose $\forall$ plays a new element $x$ in $\mathfrak{A}$ (the case of a move in $\mathfrak{B}$ is symmetric). If $x$ falls into $P$, $\exists$ replies using the fixed $r$-equivalence between $P$ and $P^*$. Otherwise $x$ lies in some $H_\xi$.  Let $m = r-k$ be the number of rounds remaining.  By the invariant, the active chunk inside $H_\xi$ contains at least $2^m-1$ untouched complete blocks.  The $r$-character of $x$ (with respect to the already fixed parameters) belongs to $D$.  Since every complete block realizes all characters in $D$, $\exists$ can pick a point $y$ with the same character inside the \emph{middle} complete block of the corresponding $H'_\xi$.  The middle block splits the $2^m-1$ blocks into two groups of $2^{m-1}-1$ blocks on each side, reestablishing the invariant for $m-1$ rounds.  The templates guarantee that the boundary intervals obtain matching $r$-characters.

    After $r$ rounds the chosen points satisfy the same atomic relations, so $\exists$ wins. Hence $\mathfrak{A}\equiv_r\mathfrak{B}$, and in particular $\mathfrak{A}\equiv_q\mathfrak{B}$.
\end{proof}

\subsection{Exact stretching on regular uncountable cardinals}
A striking fact is that the set of $q$-characteristics that are satisfied on every regular uncountable cardinal is actually the same for all such cardinals. To establish the result, we may assume $\kappa,\lambda$ are two of these cardinals.
\begin{theorem}[Exact stretching]\label{lem:exact-stretching}
    Let $\kappa$ and $\lambda$ be regular uncountable cardinals. For every fixed $q<\omega$ and every FCO structure $\mathfrak{A}=(\lambda,<,C_0,\dots,C_{d-1})$, there is an FCO structure $\mathfrak{B}=(\kappa,<,D_0,\dots,D_{d-1})$ such that $\mathfrak{A}\equiv_q\mathfrak{B}$.
\end{theorem}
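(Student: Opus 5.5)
The plan is to pass through the finite ordinal-profile scheme (Theorem~\ref{lem:finite-profile-scheme}) and pump a run of the automaton along a closed unbounded set. Let $\varphi=[[\mathfrak A]]^q$, read as a sentence in $L_d$, and let $\mathcal A_\varphi=(D,Q,I,F_{\mathrm{succ}},F_{\mathrm{lim}},\delta_{\mathrm{next}},\delta_{\mathrm{lim}})$ be the associated data. Since $\mathfrak A\models\varphi$, there is a run $r\colon\lambda\to Q$ satisfying conditions (1)--(4), and $F:=\operatorname{LimSupp}(r,\lambda)\in F_{\mathrm{lim}}$. By Lemma~\ref{lem:hintikka}, it suffices to construct a coloring of $\kappa$ together with a run on it for which the limit condition also holds at $\kappa$. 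Decoding through $\ell$ then gives $\mathfrak B\models\varphi$, and hence $\mathfrak B\equiv_q\mathfrak A$.

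\textbf{Step 1: a loop on $\lambda$.} First choose $\beta_0<\lambda$ such that $F\subseteq r(\xi)$ for all $\xi>\beta_0$. For each $e\in D\setminus F$, the set $\{\xi:e\notin r(\xi)\}$ is unbounded in $\lambda$. Hence the set $C$ of limits $\delta>\beta_0$ below which every such $e$ is absent cofinally is closed unbounded in $\lambda$, because $\lambda$ is regular uncountable. For every $\delta\in C$ we have $\operatorname{LimSupp}(r,\delta)=F$. The states $r(\delta)$ take only finitely many values, so some $s\in Q$ satisfies $r(\delta)=s$ for unboundedly many $\delta\in C$. Pick two of them, $\delta_1<\delta_2$, with enough points of $C$ in between. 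The segment $r\restriction[\delta_1,\delta_2)$ is then a \emph{loop}: it starts in $s$, it keeps $F$ in every state, every $e\notin F$ is absent cofinally below $\delta_2$, it ends with limit support $F$, and $(F,s)\in\delta_{\mathrm{lim}}$.

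\textbf{Step 2: shortening.} If $\lambda>\kappa$, the ordinals $\delta_1$ and $\delta_2-\delta_1$ may exceed $\kappa$, so both the prefix $\mathfrak A\restriction\delta_1$ and the loop segment must be replaced by short segments. Consider the property ``the colored segment admits a partial run starting in a given state, with prescribed limit support at the end, with a given set $F$ always present, and with every $e\notin F$ absent cofinally.'' I would express this property, by applying Kamp's theorem and Theorem~\ref{lem:finite-profile-scheme} to a suitably modified automaton, as a first-order sentence $\psi$ of some rank $q'$. I would then replace each segment by a $q'$-equivalent colored ordinal of order type $<\omega^\omega$, which exists by Corollary~\ref{thm:improved-bound}; a witnessing run transfers because $\psi$ is preserved. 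The prefix is handled the same way, with its start state in $I$ and its end giving the limit state $s$. This is the step I expect to be the main obstacle. The delicate part is encoding the ``limit-support'' boundary data faithfully into a single sentence, and in particular ensuring that the loop's internal cofinal absences persist after shortening. If the direct encoding fails, I would colour each position of the segment by its run state and shorten the resulting $(d\cdot|Q|)$-coloured ordinal, checking that the automaton conditions have bounded rank.

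\textbf{Step 3: pumping on $\kappa$.} Let $\mathfrak P$ be the short prefix and $\mathfrak L$ the short loop, with runs $r_P$ and $r_L$. Set $\mathfrak B=\mathfrak P+\mathfrak L\cdot\kappa$, which has order type $\kappa$ because all pieces are shorter than $\kappa$ and $\kappa$ is indecomposable. Concatenate the runs, starting each copy of $\mathfrak L$ in state $s$. Successor transitions between copies are valid because the loop's final limit support is $F$ and $(F,s)\in\delta_{\mathrm{lim}}$. At a limit $\eta$ of copies, of any cofinality, $F$ is present throughout a tail and each $e\notin F$ is absent inside every copy, so $\operatorname{LimSupp}=F$ and the state $s$ is legal. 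Limits internal to a copy are handled by $r_L$. Finally, $\operatorname{LimSupp}$ at $\kappa$ equals $F\in F_{\mathrm{lim}}$, so the conditions of Theorem~\ref{lem:finite-profile-scheme} hold and $\mathfrak B\models\varphi$.
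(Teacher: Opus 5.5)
Your proposal is correct and is essentially the paper's argument. Both take an accepting run on $\lambda$ and use the club of common accumulation points of the sets $\{\xi:e\notin r(\xi)\}$, $e\notin F$, to cut a prefix and a segment whose endpoints have limit support $F$; both shorten these via the bounded-representative theorem on the run-state colourings (your Step~2 fallback is what the paper means by recording the finite profiles) and append $\kappa$ copies of the short segment to the short prefix. Requiring $r(\delta_1)=r(\delta_2)$ is harmless but unnecessary: every copy boundary is a limit position, so only $(F,r(\delta_1))\in\delta_{\mathrm{lim}}$ is needed there (these are limit, not successor, transitions).
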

\begin{proof}

Apply Theorem~\ref{lem:finite-profile-scheme} to $[[\mathfrak A]]^q$, and use its notation
$D,Q,I,\delta_{\mathrm{next}},\delta_{\mathrm{lim}},F_{\mathrm{lim}}$, and
$\ell$.  Since $\mathfrak A\models\varphi$, where $\varphi=[[(\mathfrak A)]]^q$, there is an accepting profile assignment $r\colon\lambda\to Q$.
Let
$
Y=\operatorname{LimSupp}(r,\lambda).
$
Then $Y\in F_{\mathrm{lim}}$.

 For $e\notin Y$, put $S_e=\{\xi<\lambda:e\notin r(\xi)\}.$
Each $S_e$ is unbounded in $\lambda$.  Since $\lambda$ is regular
uncountable, the set
$
  \operatorname{Acc}(S_e)=
  \{\delta<\lambda:S_e\cap\delta\text{ is unbounded in }\delta\}$
  is club in $\lambda$.  Hence the finite intersection
   $\cap_{e\notin Y}\operatorname{Acc}(S_e)$
Choose $\delta_0<\delta_1$ in this club above a stage after which every $e\in Y$ belongs to every $r(\xi)$. Then
$
  \operatorname{LimSupp}(r,\delta_0)=
  \operatorname{LimSupp}(r,\delta_1)=Y.
 $

The profile
condition at $\delta_0$ gives$  (Y,r(\delta_0))\in\delta_{\mathrm{lim}}.$

Let $U$ be the colored ordinal
$r\mathbin{\upharpoonright}[0,\delta_0)$, and let $V$ be the reindexed
colored interval
$r\mathbin{\upharpoonright}[\delta_0,\delta_1)$.  Both
have no last point.

We record the finite profiles of $U$ and $V$ in the finite profile language. By the bound Theorem ~\ref{thm:refined-bound}, we obtain countable copies $U^*$ and $V^*$ satisfying the corresponding profile sentences.

Now, we can consider
\[
  \mathfrak B= \mathfrak U^*+\sum_{\xi<\kappa}\mathfrak V^*_{\xi}.
\]
The profile obtained by concatenating $U^*$ with $\kappa$ copies of $V^*$ is accepted by the same automaton and decodes to $\mathfrak B$. Hence $\mathfrak B\models[[(\mathfrak A)]]^q$, and therefore $\mathfrak A\equiv_q\mathfrak B$.

\end{proof}

We tried a lot of work to avoid using automata here but eventually all failed. We think that the main obstacle is that the current character of colored ordinal is not enough to get all the data we need to construct such a colored ordinal. We need a closed unbounded set of to grasp all the things, only an unbounded set would fail to cover some intervals. A detailed analysis in the use of the automata may help avoid this, however we didn't success.

\subsection{General answer of the realizability problem}

Fix a number of colors $d$ and a quantifier rank $q$. Let $\sigma\in\Sigma_d(q)$ be a characteristic. We write $\mathfrak{A}\models\sigma$ if $\mathfrak{A}$ is a $d$-coloured ordinal whose $q$-characteristic equals $\sigma$. For an ordinal $\alpha$ let $\Psi_\sigma(\alpha)$ be the statement
\begin{equation*}\label{eq:Psi}
    \text{there exists a $d$-colored ordinal } \mathfrak{A}=(\alpha,<,C_0,\dots,C_{d-1})\text{ with } \mathfrak{A}\models\sigma .
\end{equation*}
Because the finite set of colors and the quantifier rank are fixed, $\Psi_\sigma(\alpha)$ can be expressed by a single MSO sentence in the structure $(\alpha,<)$. Indeed, $\sigma$ is the equivalence class of a first‑order sentence $\varphi_\sigma$ of rank $q$, and $\Psi_\sigma(\alpha)$ is exactly
$$(\alpha,<)\models \exists C_0\dots\exists C_{d-1}\Bigl(\bigwedge_{i<j}\forall x\,\neg(C_i(x)\wedge C_j(x))\wedge\forall x\,\bigvee_i C_i(x)\wedge \varphi_\sigma\Bigr),$$
which is a MSO sentence. In particular, if $\alpha<\omega_2$, then the truth of $\Psi_\sigma(\alpha)$ is decidable by B\"uchi's theorem (Theorem~\ref{theorem:Decidability}).

Our aim is to reduce the decision of $\Psi_\sigma(\alpha)$ for arbitrary $\alpha$ to the case where $\alpha$ is a very simple ordinal built from $\omega_1$, $\omega^\omega$, and a tail $<\omega^\omega$.  The reduction rests on the two stretching theorems proved above: the cofinal stretching theorem (Theorem~\ref{lem:cofinal-stretching}) and the exact stretching theorem (Theorem~\ref{lem:exact-stretching}).

We first note that the realizability problem respects ordinal addition via the characteristic algebra developed in Section~\ref{sec:EFbound}.

\begin{lemma}[Sum decomposition]\label{lem:sum-decomposition}
    Let $\alpha = \alpha_1 + \alpha_2$ and $\sigma\in\Sigma_d(q)$. Then $\Psi_\sigma(\alpha)$ holds if and only if there exist $\sigma_1,\sigma_2\in\Sigma_d(q)$ such that $\sigma = \sigma_1 \oplus \sigma_2$, $\Psi_{\sigma_1}(\alpha_1)$ and $\Psi_{\sigma_2}(\alpha_2)$.
\end{lemma}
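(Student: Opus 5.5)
The argument has two directions. Both rely on the fact that $\oplus$ is well defined on $q$-characteristics via Lemma~\ref{lem:colored-facts}(\ref{itm:colored-sum}). They also rely on the observation that a $d$-colored ordinal of order type $\alpha_1+\alpha_2$ splits canonically as the sum of its restrictions to the initial segment of type $\alpha_1$ and the final segment of type $\alpha_2$.

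\emph{From right to left.} Suppose $\sigma=\sigma_1\oplus\sigma_2$, and choose witnesses $\mathfrak A_1=(\alpha_1,<,\dots)\models\sigma_1$ and $\mathfrak A_2=(\alpha_2,<,\dots)\models\sigma_2$. The concatenation $\mathfrak A_1+\mathfrak A_2$ has order type exactly $\alpha_1+\alpha_2=\alpha$, and its color classes still partition the domain. By definition, the operation $\oplus$ is computed from any representatives: $\sigma_1\oplus\sigma_2$ is the $q$-characteristic of $\mathfrak A+\mathfrak B$ for any $\mathfrak A\models\sigma_1$ and $\mathfrak B\models\sigma_2$. This is independent of the choice of representatives by the sum theorem, Lemma~\ref{lem:colored-facts}(\ref{itm:colored-sum}), applied to the two-element index order. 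Hence $\mathfrak A_1+\mathfrak A_2\models\sigma$, and $\Psi_\sigma(\alpha)$ holds.

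\emph{From left to right.} Let $\mathfrak A=(\alpha,<,C_0,\dots,C_{d-1})\models\sigma$. Put $\mathfrak A_1=\mathfrak A\mathbin{\upharpoonright}\alpha_1$ and $\mathfrak A_2=\mathfrak A\mathbin{\upharpoonright}[\alpha_1,\alpha)$, reindexed so that its order type is $\alpha_2$. Let $\sigma_i$ be the member of $\Sigma_d(q)$ equivalent to $[[\mathfrak A_i]]^q$. Empty domains are allowed in $\Sigma_d(q)$, which handles the degenerate cases $\alpha_1=0$ or $\alpha_2=0$. Then $\Psi_{\sigma_i}(\alpha_i)$ holds, witnessed by $\mathfrak A_i$. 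Since $\mathfrak A=\mathfrak A_1+\mathfrak A_2$, the same well-definedness of $\oplus$ gives $\sigma=\sigma_1\oplus\sigma_2$.

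The only point needing care is that $\oplus$ really is a function on classes. It must not depend on the representatives, which may have different order types. This is exactly the content of the sum theorem together with Lemma~\ref{lem:hintikka}, which identifies $q$-equivalence with logical equivalence of characteristics. No stretching or automata arguments are needed, since the order types of the pieces are fixed by the decomposition $\alpha=\alpha_1+\alpha_2$ itself.
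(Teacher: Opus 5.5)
Your proposal is correct and is essentially the paper's proof. The paper also concatenates witnesses on $\alpha_1$ and $\alpha_2$ and appeals to the sum theorem, Lemma~\ref{lem:colored-facts}(\ref{itm:colored-sum}), for one direction, and splits a witness on $\alpha$ at $\alpha_1$ for the other. Your added remarks on the well-definedness of $\oplus$ and the empty-domain cases only make explicit what the paper leaves implicit.
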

\begin{proof}
    If $\mathfrak A_i\models\sigma_i$ has order type $\alpha_i$ ($i=1,2$), then $\mathfrak A_1+\mathfrak A_2$ has order type $\alpha$ and characteristic $\sigma_1\oplus\sigma_2$ by Lemma~\ref{lem:colored-facts}(\ref{itm:colored-sum}). Conversely, any colored ordinal of type $\alpha$ splits uniquely as the sum of its initial segment of type $\alpha_1$ and its final segment of type $\alpha_2$; letting $\sigma_i$ be the characteristic of the $i$-th piece gives the required decomposition.
\end{proof}

Consequently, if we can determine for each ``indecomposable large'' part of $\alpha$ the set of realizable characteristics, the whole set is obtained via the addition table of $\oplus$. The next lemma disposes of the large parts in one step.

\begin{lemma}[Absorption of large segments]\label{lem:absorb-large}
    Let $\gamma$ be an ordinal such that $\gamma\equiv\omega^\omega$ (i.e.\ its Cantor normal form contains no term below $\omega^\omega$). Let $\sigma\in\Sigma_d^+(q)$. Then the following hold.
    \begin{enumerate}
        \item If $\cf(\gamma)=\omega$, then $\Psi_\sigma(\gamma)$ holds if and only if $\Psi_\sigma(\omega^\omega)$ holds.
        \item If $\cf(\gamma)>\omega$, then $\Psi_\sigma(\gamma)$ holds if and only if $\Psi_\sigma(\omega_1)$ holds.
    \end{enumerate}
\end{lemma}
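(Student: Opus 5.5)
The plan is to reduce both items directly to the two stretching theorems, taking care that the comparison ordinals $\omega^\omega$ and $\omega_1$ satisfy the hypotheses of those theorems. First note that $\Psi_\sigma(\gamma)$ depends only on whether some $d$-colored ordinal of type $\gamma$ has $q$-characteristic $\sigma$. By Lemma~\ref{lem:hintikka}, if $\mathfrak A\equiv_q\mathfrak B$ then they have the same $q$-characteristic. So it suffices to show that every $d$-colored ordinal of one type is $q$-equivalent to some $d$-colored ordinal of the other type, and conversely.

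For item (1), $\omega^\omega\equiv\omega^\omega$ trivially, and $\cf(\omega^\omega)=\omega=\cf(\gamma)$. Theorem~\ref{lem:cofinal-stretching}, applied with $\kappa=\omega$ and $(\gamma_1,\gamma_2)=(\gamma,\omega^\omega)$, gives the forward direction. Applied with the roles swapped, it gives the converse. Nothing else is needed here beyond checking that $\gamma\equiv\omega^\omega$ is exactly the hypothesis of that theorem, as Theorem~\ref{theorem:DMT} and the parenthetical in the statement confirm.

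For item (2), put $\kappa=\cf(\gamma)$, a regular uncountable cardinal. I would pass through $\kappa$ itself in two steps. First, $\kappa\equiv\omega^\omega$: as an uncountable cardinal, $\kappa=\omega^\kappa$ is additively indecomposable and its Cantor normal form has no term below $\omega^\omega$. Since $\cf(\kappa)=\kappa=\cf(\gamma)$, Theorem~\ref{lem:cofinal-stretching} shows that $\Psi_\sigma(\gamma)\iff\Psi_\sigma(\kappa)$. Second, Theorem~\ref{lem:exact-stretching} applied to the regular uncountable cardinals $\kappa$ and $\omega_1$ (in both directions) gives $\Psi_\sigma(\kappa)\iff\Psi_\sigma(\omega_1)$. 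Chaining the two equivalences yields (2).

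The only delicate point I foresee is making sure the cofinal stretching theorem really applies when $\kappa$ is uncountable, that is, that Lemma~\ref{lem:universal-large} provides the needed universal decompositions for cofinality $\kappa$. That lemma is stated for arbitrary $\gamma\equiv\omega^\omega$ with $\kappa=\cf(\gamma)$, so I will just invoke it. The restriction $\sigma\in\Sigma_d^+(q)$ only ensures nonempty domains and plays no further role, since $\gamma,\omega^\omega,\omega_1$ are all nonzero.
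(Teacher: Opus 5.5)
Your proposal is correct and follows the paper's proof essentially verbatim: item (1) applies cofinal stretching in both directions between $\gamma$ and $\omega^\omega$, and item (2) passes through $\kappa=\cf(\gamma)$, using cofinal stretching between $\gamma$ and $\kappa$ and then exact stretching between $\kappa$ and $\omega_1$. Your observation that $\kappa=\omega^\kappa$, hence $\kappa\equiv\omega^\omega$, supplies the small justification that the paper asserts without comment.
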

\begin{proof}
    For part (1), since $\gamma\equiv\omega^\omega$ and $\cf(\gamma)=\omega$, the ordinals $\gamma$ and $\omega^\omega$ satisfy the hypotheses of Theorem~\ref{lem:cofinal-stretching} (they are both $\equiv\omega^\omega$ and have the same cofinality $\omega$).  Taking $\gamma_1=\gamma$ and $\gamma_2=\omega^\omega$, the theorem provides for any $d$-colored ordinal on $\gamma$ an $q$-equivalent one on $\omega^\omega$, and vice versa.  Thus $\Psi_\sigma(\gamma)\iff\Psi_\sigma(\omega^\omega)$.

    For part (2), let $\kappa=\cf(\gamma)$.  Since $\gamma\equiv\omega^\omega$, its cofinality $\kappa$ is a regular uncountable cardinal and again satisfies $\kappa\equiv\omega^\omega$.  Moreover $\cf(\kappa)=\kappa$.  By Theorem~\ref{lem:cofinal-stretching} applied to $\gamma_1=\gamma$, $\gamma_2=\kappa$ we obtain $\Psi_\sigma(\gamma)\iff\Psi_\sigma(\kappa)$.  Now Theorem~\ref{lem:exact-stretching} gives exact stretching between the two regular uncountable cardinals $\kappa$ and $\omega_1$, hence $\Psi_\sigma(\kappa)\iff\Psi_\sigma(\omega_1)$.  Combining these equivalences yields $\Psi_\sigma(\gamma)\iff\Psi_\sigma(\omega_1)$.
\end{proof}

Thus, for every large block of an ordinal $\alpha$, its possible $q$-characteristics are exactly those that can be realized on the single ordinal $\omega^\omega$ (if its cofinality is $\omega$) or on $\omega_1$ (if its cofinality is uncountable).  With this reduction and the finite algebra of characteristics, the decision problem collapses to checking a finite number of MSO sentences on $\omega^\omega$ and $\omega_1$.

\begin{theorem}[Realizability is decidable]\label{cor:realizability}
    Given $\sigma\in\Sigma_d^+(q)$ and an ordinal $\alpha$ presented by its Cantor normal form, it is decidable whether there exists a $d$-coloured ordinal of order type exactly $\alpha$ satisfying $\sigma$.
\end{theorem}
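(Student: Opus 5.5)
The plan is to split $\alpha$ into a large part and a small tail and to decide each part separately, using the finite characteristic algebra of Lemma~\ref{lem:effective-characteristic-algebra}. From the Cantor normal form we read off $\alpha=\alpha_0+\theta$, where $\alpha_0$ gathers all terms $\omega^{\beta}c$ with $\beta\geq\omega$ and $\theta<\omega^\omega$ gathers the remaining terms. Thus either $\alpha_0=0$ or $\alpha_0\equiv\omega^\omega$. We assume, as in the main theorem, that $\cf(\alpha_0)$ is given effectively; in practice we only need to know whether it equals $\omega$. By Lemma~\ref{lem:sum-decomposition}, $\Psi_\sigma(\alpha)$ holds if and only if there are $\sigma_1,\sigma_2\in\Sigma_d(q)$ with $\sigma=\sigma_1\oplus\sigma_2$, $\Psi_{\sigma_1}(\alpha_0)$ and $\Psi_{\sigma_2}(\theta)$. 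By Lemma~\ref{lem:effective-characteristic-algebra}, $\Sigma_d(q)$ can be enumerated effectively and $\oplus$ is computable. So it suffices to decide $\Psi_\tau(\alpha_0)$ and $\Psi_\tau(\theta)$ for each of the finitely many $\tau$.

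For the large part there are three cases. If $\alpha_0=0$, only the characteristic of the empty order is realizable. Otherwise $\tau$ must lie in $\Sigma_d^+(q)$, and Lemma~\ref{lem:absorb-large} replaces $\alpha_0$ by $\omega^\omega$ when $\cf(\alpha_0)=\omega$, and by $\omega_1$ when $\cf(\alpha_0)>\omega$. Both $\omega^\omega$ and $\omega_1$ are below $\omega_2$, so $\Psi_\tau(\omega^\omega)$ and $\Psi_\tau(\omega_1)$ are MSO sentences over decidable structures by Theorem~\ref{theorem:Decidability}. Two details need care here. First, $\Psi_\tau$ has to be written effectively from the code of $\tau$, using the characteristic sentence as in the display defining $\Psi_\sigma$. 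Second, $\omega^\omega$ must be handled as a structure: either treat $(\omega^\omega,<)$ directly, or express the question inside $(\omega_1,<)$ by relativizing to the MSO-definable initial segment of order type $\omega^\omega$, for instance the least point of limit-rank $\omega$.

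The tail $\theta<\omega^\omega$ can be decided without MSO. Write $\theta$ in Cantor normal form as a finite sum of powers $\omega^m$, apply Lemma~\ref{lem:sum-decomposition} repeatedly, and compute the set $R(\omega^m)$ of characteristics realizable on $\omega^m$ by recursion on $m$. We have $R(1)=\{$one-point codes$\}$. For the recursion, $\omega^{m+1}=\omega^m\cdot\omega$, and a coloring of $\omega^{m+1}$ is an $\omega$-sum of colorings of $\omega^m$. Its characteristic is therefore determined by an $\omega$-sequence of elements of $R(\omega^m)$. By the Ramsey argument in Lemma~\ref{lem:rational-form}, which preserves each block's order type exactly when blocks are merged in groups, the characteristic is of the form $\rho\oplus\Omega(\epsilon)$, where $\rho$ is a finite $\oplus$-sum from $R(\omega^m)$ and $\epsilon$ is a finite nonempty $\oplus$-sum from $R(\omega^m)$. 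All of these are computable closures over a finite set. Alternatively, we can again invoke Theorem~\ref{lem:interval-realizabiliy} with $\theta_1=\theta$ and $\theta_2=\theta+1$.

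The main obstacle is to ensure that every step is effective and not merely correct. This concerns the uniform passage from a code in $\Sigma_d(q)$ to the MSO sentence, and the exact order-type bookkeeping in the $\omega^m$ recursion: Ramsey merging must keep block lengths equal to $\omega^m$, which holds because $\omega^m\cdot k$ grouped $\omega$ times is still $\omega^{m+1}$. If that recursion proves delicate, I would fall back on Theorem~\ref{lem:interval-realizabiliy} for the tail, which settles it immediately.
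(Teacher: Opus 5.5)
Your proposal is correct, and its skeleton is the paper's: split $\alpha=\alpha_0+\theta$ off the Cantor normal form, and range over the finitely many decompositions $\sigma=\sigma_1\oplus\sigma_2$ from Lemma~\ref{lem:sum-decomposition}. For $\alpha_0>0$, both arguments use Lemma~\ref{lem:absorb-large} to pass to $\omega^\omega$ or $\omega_1$ and then apply Theorem~\ref{theorem:Decidability}.

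The one real difference is the tail. The paper decides $\Psi_{\sigma_2}(\theta)$ directly as an MSO sentence over the countable ordinal $\theta$. Your main route instead stays inside the finite characteristic algebra, using $R(\omega^{m+1})=\{\rho\oplus\Omega(\epsilon)\}$, where $\rho$ is a possibly empty and $\epsilon$ a nonempty finite $\oplus$-sum from $R(\omega^m)$. That recursion is sound in both directions. Infinite Ramsey plus Lemma~\ref{lem:colored-facts}(\ref{itm:colored-sum}) gives one inclusion. The identity $\omega^m\cdot k+(\omega^m\cdot\ell)\cdot\omega=\omega^{m+1}$ for $\ell\geq1$ gives the other. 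So the tail becomes a finite computation with Lemma~\ref{lem:effective-characteristic-algebra} and needs no appeal to B\"uchi--Siefkes. MSO decidability is still used for the large part in both arguments.

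You also make explicit two points the paper passes over. First, you restrict $\tau$ to $\Sigma_d^+(q)$ before invoking Lemma~\ref{lem:absorb-large}. The paper applies that lemma to arbitrary $\sigma_1\in\Sigma_d(q)$, which is harmless because the empty class is realized on neither side. Second, you take $\cf(\alpha_0)$ as part of the input, matching the caveat in the proof of the main theorem. The proof of this theorem instead claims the cofinality can be read off the normal form, but its last-exponent recursion is circular for exponents such as $\omega_1=\omega^{\omega_1}$.
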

\begin{proof}
    Let an ordinal $\alpha$ be given by its Cantor normal form. Write $\alpha=\alpha_0+\theta$, where $\alpha_0$ is the sum of all terms of the Cantor normal form of $\alpha$ whose exponents are at least $\omega$, and $\theta$ is the sum of the remaining terms. If $\alpha$ contains no term with exponent $\ge\omega$, set $\alpha_0=0$ and $\theta=\alpha$. In either case we have $\theta<\omega^\omega$, and if $\alpha_0>0$ then $\alpha_0\equiv\omega^\omega$. The problem is to decide whether $\Psi_\sigma(\alpha)$ holds.

    By Lemma~\ref{lem:sum-decomposition}, $\Psi_\sigma(\alpha)$ holds if and only if there exist $\sigma_1,\sigma_2\in\Sigma_d(q)$ such that $\sigma=\sigma_1\oplus\sigma_2$, $\Psi_{\sigma_1}(\alpha_0)$ and $\Psi_{\sigma_2}(\theta)$.

    If $\alpha_0=0$, then $\alpha=\theta$ and the condition reduces to $\Psi_\sigma(\theta)$. Since $\theta<\omega^\omega$, this is an MSO sentence on a countable ordinal, decidable by Theorem~\ref{theorem:Decidability}.

    If $\alpha_0>0$, then $\alpha_0\equiv\omega^\omega$. Let $\kappa=\cf(\alpha_0)$; note that $\kappa$ can be read off directly from the Cantor normal form of $\alpha_0$ (if $\beta$ is the final exponent in the Cantor normal form of $\alpha_0$, then $\operatorname{cf}(\alpha_0)=\omega$ when $\beta$ is a successor ordinal, while $\operatorname{cf}(\alpha_0)=\operatorname{cf}(\beta)$ when $\beta$ is a limit ordinal). By Lemma~\ref{lem:absorb-large},
    \begin{itemize}
        \item if $\kappa=\omega$, then $\Psi_{\sigma_1}(\alpha_0)$ holds iff $\Psi_{\sigma_1}(\omega^\omega)$ holds;
        \item if $\kappa>\omega$, then $\Psi_{\sigma_1}(\alpha_0)$ holds iff $\Psi_{\sigma_1}(\omega_1)$ holds.
    \end{itemize}
    Both $\omega^\omega$ and $\omega_1$ are strictly smaller than $\omega_2$, and therefore, by Theorem~\ref{theorem:Decidability}, the truth of the MSO sentences $\Psi_{\sigma_1}(\omega^\omega)$ and $\Psi_{\sigma_1}(\omega_1)$ is decidable. The truth of $\Psi_{\sigma_2}(\theta)$ is likewise decidable because $\theta<\omega^\omega\le\omega_1<\omega_2$.

    Since the set $\Sigma_d(q)$ is finite, there are only finitely many pairs $(\sigma_1,\sigma_2)$ with $\sigma_1\oplus\sigma_2=\sigma$. Hence we can effectively enumerate all such decompositions and check whether for at least one pair both $\Psi_{\sigma_1}(\alpha_0)$ and $\Psi_{\sigma_2}(\theta)$ hold. This yields an effective decision procedure for $\Psi_\sigma(\alpha)$.
\end{proof}

The distinction between countable and uncountable cofinality is essential. Consider the $2$-colored ordinal $(\omega^\omega,<,C_0,C_1)$, where $C_0=\{\omega^n:n<\omega\}$ and $C_1=\omega^\omega\setminus C_0$. The set $C_0$ is unbounded, $1$ is its least element, and every other $C_0$-point has an immediate predecessor within $C_0$. Consequently, every ordinal admitting a coloring with the same $4$-characteristic has countable cofinality.

\section{The classification of all FnBTs}

We now extend our analysis to the full tree structure. We prepare technical lemmas that serve as tree-theoretic analogues of the FCO $q$-characters introduced above. We then prove quantifier-elimination theorems for each canonical FnBT and establish decidability of its theory.

After analyzing the canonical FnBTs, we turn to arbitrary standard trees. Our central result states that every standard tree is elementarily equivalent to a canonical FnBT. After treating each canonical type by establishing the theory, we immediately identify the standard trees belonging to that type.

Let
$$\mathcal L_{n,S}=\{\subseteq,S_0,\ldots,S_{n-1}\},\qquad\mathcal L_n^{\mathrm{rel}}=\{\subseteq,E_0,\ldots,E_{n-1}\},\qquad\mathcal L_n^\wedge=\mathcal L_n^{\mathrm{rel}}\cup\{\wedge,\epsilon\}.$$
For the composition argument, we use the definitionally equivalent relational
presentation in which
$$E_i(x,y)\quad\Longleftrightarrow\quad y=S_i(x),$$
and write $\mathfrak T_\alpha^{n,\mathrm{rel}}$ for the resulting structure on $\mathfrak{T}_\alpha^n$.
Finite meet closures need not be closed under the total functions $S_i$; the
relational presentation keeps the finite EF positions used below. Let
$\mathfrak T_\alpha^{n,\wedge}$ be the expansion by the root $\epsilon$ and the
longest-common-initial-segment function $\wedge$. The root and meet are
definable from the tree order.

\phantomsection\label{def:path-predicates}
If $x\subsetneq y$, define
$$I(x,y)=\{z:x\subseteq z\subsetneq y\},\qquad C_i(z;y)\Longleftrightarrow\exists u\bigl(E_i(z,u)\wedge u\subseteq y\bigr).$$

Let $\mathfrak I(x,y)$ denote the FCO $(I(x,y),\subsetneq,(C_i(\mathord\cdot;y))_{i<n})$. In $\mathfrak T^{n,\mathrm{rel}}_\alpha$ it records the transition word from $x$ to $y$.

For every $q<\omega$ and every $\sigma\in\Sigma_n^{+}(q)$, introduce a binary predicate $P_{q,\sigma}$, interpreted by
$$P_{q,\sigma}(x,y)\quad\Longleftrightarrow\quad x\subsetneq y\ \text{ and }\ \mathfrak I(x,y)\models\sigma.$$

Let
$$\mathcal L_n^{1-qe}=\mathcal L_n^\wedge\cup\{P_{q,\sigma}:q<\omega,\ \sigma\in\Sigma_n^{+}(q)\},$$
and let $\mathfrak T^{n,1-qe}_\alpha$ be the resulting expansion of $\mathfrak T_\alpha^n$. For later use, $\varphi_\sigma(x,y)$ denotes the first-order $\mathcal L_n^{\mathrm{rel}}$-formula obtained from $\sigma$ by restricting every quantifier to $I(x,y)$, replacing the order by $\subsetneq$, and replacing $C_i(z)$ by $\exists u(E_i(z,u)\wedge u\subseteq y)$.

\begin{lemma}[Path calculus]\label{lem:path-calculus}
    \begin{enumerate}
        \item\label{itm:path-concat} The characteristic of a concatenation of two transition words is effectively determined by their characteristics.
        \item\label{itm:path-split} If $q\geq1$, $\mathfrak U\equiv_{q+1}\mathfrak V$, and $\mathfrak U=\mathfrak U_0+\mathfrak U_1$ with both parts nonempty and $\mathfrak U_1$ having a least point, then $\mathfrak V=\mathfrak V_0+\mathfrak V_1$ for nonempty parts satisfying $\mathfrak U_i\equiv_q\mathfrak V_i$.
        \item\label{itm:path-effective} For every $q<\omega$, the set $\Sigma_n^{+}(q)$ and its addition table are effectively computable. Every member has a nonempty representative of order type below $\omega^\omega$; at rank $0$ we choose a one-letter representative.
    \end{enumerate}
\end{lemma}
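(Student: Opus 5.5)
The plan is to reduce each of the three items to the finitely colored ordinal facts already established in Sections~\ref{sec:preliminaries} and~\ref{sec:EFbound}. The key observation is that a transition word $\mathfrak I(x,y)$ is an $n$-colored ordinal in the sense of Section~\ref{sec:preliminaries}. Its domain $I(x,y)$ is well ordered by $\subsetneq$, because it is a convex subset of the path below $y$. Each $z\in I(x,y)$ has exactly one $i$ with $S_i(z)\subseteq y$, since $z\subsetneq y$ and $y$ determines the next letter $y(\operatorname{dom} z)$. Hence the predicates $C_i(\cdot\,;y)$ form a partition, and the statement becomes a statement about $\Sigma_n(q)$.

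For item~(\ref{itm:path-concat}), I will first note how concatenation arises. If $x\subsetneq z\subsetneq y$, then $\mathfrak I(x,y)=\mathfrak I(x,z)+\mathfrak I(z,y)$ as colored ordinals. The coloring is unchanged on $I(x,z)$ because $S_i(w)\subseteq z$ iff $S_i(w)\subseteq y$ for $w\subsetneq z$. Lemma~\ref{lem:colored-facts}(\ref{itm:colored-sum}), applied with the two-element index order, shows that the characteristic of the sum depends only on the characteristics of the summands. Effectiveness is exactly the computability of $\oplus$ in Lemma~\ref{lem:effective-characteristic-algebra}.

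Item~(\ref{itm:path-split}) is then literally Lemma~\ref{lem:colored-facts}(\ref{itm:colored-splitting}), applied to the colored ordinals $\mathfrak U,\mathfrak V$. The least point of $\mathfrak V_1$ is the element $b$ answered by $\exists$ to $\forall$'s move $a$. Nonemptiness of $\mathfrak V_0$ needs one remark. Since $q\ge1$, the sentence ``$a$ has a predecessor'' has rank $1$ relative to the chosen point, so the one-round continuation of the game forces $b$ to have a predecessor as well.

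For item~(\ref{itm:path-effective}), I will close the characteristics of the empty order and of the $n$ one-point orders under $\oplus$ and $\Omega$, as in the proof of Theorem~\ref{thm:exact-algorithm}. By Lemma~\ref{lem:rational-form}, this closure is all of $\Sigma_n(q)$. Membership in $\Sigma_n^+(q)$ is decided as follows. A class has a nonempty representative iff some stored rational term is nonempty, and it suffices to track the nonempty terms during the closure. This works because the empty order can be $q$-equivalent to a nonempty one only at rank $0$, by Lemma~\ref{lem:small-chain} and the rank-$1$ sentence $\exists w\,(w=w)$. The addition table is the restriction of $\oplus$, so it is computed by Lemma~\ref{lem:effective-characteristic-algebra}. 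Every rational term denotes an ordinal below $\omega^\omega$, which gives the stated bound on representatives. At rank $0$ there is a single class, and the one-letter word $c_0$ represents it.

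The only step I expect to need care is the nonempty/empty bookkeeping at low ranks in item~(\ref{itm:path-effective}). There I will simply carry, for each state, a flag recording whether a nonempty witness has been found.
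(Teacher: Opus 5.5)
Your proposal is correct and follows the paper's own route. Items~(\ref{itm:path-concat}) and~(\ref{itm:path-split}) are reduced to the sum and splitting facts of Lemma~\ref{lem:colored-facts} together with the computable $\oplus$ of Lemma~\ref{lem:effective-characteristic-algebra}, and item~(\ref{itm:path-effective}) to the $\oplus$/$\Omega$-closure behind Theorem~\ref{thm:exact-algorithm} and Lemma~\ref{lem:rational-form}, with a one-letter word chosen at rank~$0$; your additional checks (that $\mathfrak I(x,y)=\mathfrak I(x,z)+\mathfrak I(z,y)$ with unchanged colors, that $\mathfrak V_0\neq\varnothing$, and the nonemptiness flag, which for $q\geq1$ just excludes the class of the empty order) spell out details the paper's three-line proof leaves implicit.
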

\begin{proof}
    Item~(\ref{itm:path-concat}) follows from Lemma~\ref{lem:effective-characteristic-algebra}, and item~(\ref{itm:path-split}) follows from Lemma~\ref{lem:colored-facts}(\ref{itm:colored-splitting}). Item~(\ref{itm:path-effective}) follows from Theorem~\ref{thm:exact-algorithm} together with the effective characteristic algebra; every rational term has order type below $\omega^\omega$. For $q=0$ we replace the possibly empty least representative by any one-letter word, which has the same rank-$0$ characteristic.
\end{proof}

Fix variables $\bar x=(x_1,\ldots,x_k)$. Their \emph{meet closure} is the finite tree generated by $\epsilon$, the $x_i$, and their iterated meets.

We consider the following canonical trees, where $\theta<\omega^\omega$:
\begin{itemize}
    \item Type \MakeUppercase{\romannumeral 1}: $\mathfrak{T}_{\omega^\omega}^n$;
    \item Type \MakeUppercase{\romannumeral 2}-$\theta$: $\mathfrak{T}_{\theta}^n$;
    \item Type \MakeUppercase{\romannumeral 3}-$\theta$: $\mathfrak{T}_{\omega^\omega+\theta}^n$;
    \item Type \MakeUppercase{\romannumeral 4}-$\theta$: $\mathfrak{T}_{\omega_1+\theta}^n$.
\end{itemize}

Every type of canonical trees will have their own computably enumerable axiomatization and quantifier elimination theorem in somewhat different definable extension of $\mathcal{L}_n^{1-qe}$. The techniques we use are the same in essentials while differing in minor points. We may first deal with Type \MakeUppercase{\romannumeral 1} trees, because it is the simplest one and needs less considerations.

\subsection{Type \MakeUppercase{\romannumeral 1} canonical tree}\label{sec:type1syn}

We give below an axiomatization of the theory $T_n^1$ of the Type~\MakeUppercase{\romannumeral 1} canonical tree in the language $\mathcal L_n^{1-qe}$. Every displayed axiom with free variables is understood to be universally closed and therefore to denote a sentence. We first define the auxiliary theory $T_n^0$.
\begin{enumerate}[label=\textnormal{(A\arabic*)}]
    \item\label{ax:tree-order} $\subseteq$ is a partial order with least element $\epsilon$, it is downward linear, and $x\wedge y$ is the greatest common predecessor of $x$ and $y$. The axioms describing these assertions are rather simple and easy to list out in details, so we may omit them.

    \item\label{ax:named-successors} Each $E_i$ is the graph of a total function giving an immediate successor:
    $$\forall x\,\exists!y\,E_i(x,y),\qquad E_i(x,y)\longrightarrow x\subsetneq y\wedge\neg\exists z\,(x\subsetneq z\subsetneq y).$$
    If $i\neq j$, then $E_i(x,y)\wedge E_j(x,z)$ implies $y\neq z$.

    \item\label{ax:successor-cones} Every proper extension enters a named successor cone:
    $$x\subsetneq y\longrightarrow\bigvee_{i<n}\exists z\,\bigl(E_i(x,z)\wedge z\subseteq y\bigr).$$
    Consequently, if $x\subsetneq y$, then exactly one $i<n$ satisfies
    $C_i(x;y)$.  Existence follows from Axiom~\ref{ax:successor-cones}.  For uniqueness, two differently labelled immediate successors below $y$ would be comparable by downward linearity, contradicting immediacy and their distinctness.

    \item\label{ax:path-definitions} For every $q<\omega$ and
    $\sigma\in\Sigma_n^{+}(q)$,
    $$P_{q,\sigma}(x,y)\longleftrightarrow x\subsetneq y\wedge\varphi_\sigma(x,y),$$
    with $\varphi_\sigma$ as in the
    \hyperref[def:path-predicates]{definition of path predicates and relativization}.

    \item\label{ax:characteristic-exhaustion} For every $q<\omega$,
    $$x\subsetneq y\longrightarrow\bigvee_{\sigma\in\Sigma_n^{+}(q)}P_{q,\sigma}(x,y).$$
\end{enumerate}
To form $T_n^1$, add the following axiom scheme to $T_n^0$: \label{ax:characteristic-realization} For every $q<\omega$ and $\sigma\in\Sigma_n^{+}(q)$,
$$\forall x\,\exists y\,P_{q,\sigma}(x,y).$$
As one may immediately see, this axiom will play a central role in making this axiomatized theory complete and proving other desired theorems.
It is easy to observe that $\mathfrak{T}_{\omega^\omega}^{n,1,qe}\vDash T_n^1$ from the fact that every $\sigma\in\Sigma_n^{+}(q)$ could be satisfied on some ordinal $\leq g_{\mathrm{ord}}(n,q)<\omega^\omega$ and $\omega^\omega$ is additively indecomposable.

Now our first goal is to deduce the quantifier elimination. The idea is that, for (meet-closed) finitely many nodes in models of $T_n^1$, the only information we must record is the finite tree structure restricted on them and the path characteristics satisfied between every two of them. To refer to all these things together, we may define the concept of \MakeUppercase{\romannumeral 1}-Diagrams.

A \emph{type-\MakeUppercase{\romannumeral 1} level-$N$ diagram} $\Delta$ (for $N\ge 2$) consists of the following data:
\begin{itemize}
    \item the isomorphism type of the finite meet tree on these nodes; equivalently, the set of all meet terms together with the relations $\subseteq$ and $\wedge$ restricted to them;
    \item for every edge $(u,v)$ of the meet closure (i.e., $u\subsetneq v$ and there is no meet term strictly between $u$ and $v$), a distinguished characteristic $\sigma_{u,v}\in\Sigma_n^{+}(N)$.
\end{itemize}
These data satisfy the coherence condition that for any node $u$, the first colors of the characteristics $\sigma_{u,v}$ as $v$ ranges over all immediate successors of $u$ in the meet closure are pairwise distinct. (The first color of a path characteristic is the unique index $i<n$ such that the path contains a point immediately succeeding $u$ with color $i$; it is read off from $\sigma_{u,v}$.) We will simply name $\Delta$ as a $N$ \MakeUppercase{\romannumeral 1}-diagram.

Associated to each such \MakeUppercase{\romannumeral 1}-diagram $\Delta$ we define a quantifier-free formula $\delta_{\Delta}(\bar{x})$ of $\mathcal{L}_n^{1-qe}$, which expresses that the tuple $\bar{x}$ realizes the configuration described by $\Delta$ up to level-$N$ characteristics. Concretely, $\delta_{\Delta}(\bar{x})$ is the conjunction of the following:
\begin{enumerate}[label=(\roman*)]
    \item all equalities and inequalities among the meet terms of $\bar{x}$ (including the constant $\epsilon$);
    \item all order relations $u\subseteq v$ and $u\not\subseteq v$ for any two such terms;
    \item for every edge $(u,v)$ of the diagram, the atom $P_{N,\sigma_{u,v}}(u,v)$;
    \item for every edge $(u,v)$ and every $\sigma\in\Sigma_n^{+}(N)$ with $\sigma\neq\sigma_{u,v}$, the negated atom $\neg P_{N,\sigma}(u,v)$.
\end{enumerate}
(If the meet closure contains no proper edges, then $\delta_{\Delta}$ is simply the empty conjunction $\top$, with $\epsilon$ being the only node.)

Since $\Sigma_n^{+}(N)$ is finite, for fixed $k$ and $N$ there are only finitely many level-$N$ diagrams. Moreover, in any model $\mathfrak{M}\models T_n^1$, for every assignment of the variables $\bar{x}$, the predicates $P_{N,\sigma}$ evaluated on the meet terms determine exactly one diagram $\Delta$, and $\mathfrak{M}\models\delta_{\Delta}(\bar{x})$. Indeed, Axiom~\ref{ax:characteristic-exhaustion} guarantees that for any $u\subsetneq v$ there is a unique $\sigma$ with $P_{N,\sigma}(u,v)$, and different $\sigma$ are mutually exclusive because they correspond to logically inequivalent characteristics.

The organization of the finite meet closure follows the condensation argument of \cite[Lemma~5.1]{Kellerman2026}; the edge labels in Lemma~\ref{lem:composition1} retain the additional information carried by the named successor cones. Now we establish the most important lemma related to the notion of diagrams.

\begin{lemma}[Composition lemma \MakeUppercase{\romannumeral 1}]\label{lem:composition1}
    Let $\mathfrak A,\mathfrak B\models T_n^1$, let $\bar a\in A^k$ and $\bar b\in B^k$, and let $q\geq 0$. If $\bar a$ and $\bar b$ realize the same $(q+2)$ \MakeUppercase{\romannumeral 1}-diagram, then they satisfy the same $\mathcal L_n^\wedge$-formulae of quantifier rank at most $q$.
\end{lemma}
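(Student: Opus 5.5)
Induction on $q$. For $q=0$ the claim is that the meet terms of $\bar a$ and $\bar b$ have the same quantifier-free $\mathcal L_n^\wedge$-type. Equalities, $\subseteq$, $\wedge$ and $\epsilon$ among meet terms are part of the diagram. For $E_i(u,v)$ between meet terms: $E_i(u,v)$ holds exactly when $(u,v)$ is an edge whose transition word is a single point of color $i$. By Lemma~\ref{lem:small-chain} (already with rank $2$), the length-one word of color $i$ has a rank-$2$ characteristic not shared by any longer word, so $E_i(u,v)$ is read off from $\sigma_{u,v}\in\Sigma_n^+(2)$. If $(u,v)$ is not an edge, then $E_i(u,v)$ fails on both sides.

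For the inductive step, assume the statement for $q$ and let the diagrams agree at level $q+3$. By Lemma~\ref{lem:hintikka} and the EF characterisation, it suffices that for every $a_+\in A$ there is $b_+\in B$ (and symmetrically) such that $(\bar a,a_+)$ and $(\bar b,b_+)$ realize the same $(q+2)$ \MakeUppercase{\romannumeral 1}-diagram. Adding $a_+$ changes the meet closure in one of three ways:
\begin{enumerate}
\item $a_+$ lies on an edge $(u,v)$, with $u\subsetneq a_+\subsetneq v$;
\item $a_+$ branches off an edge $(u,v)$ at a new meet $w=a_+\wedge v$ with $u\subsetneq w\subsetneq v$, or at an existing node $w$;
\item $a_+$ lies above a leaf $v$ of the meet closure.
\end{enumerate}

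In case (1), $\mathfrak I(u,v)=\mathfrak I(u,a_+)+\mathfrak I(a_+,v)$ with both parts nonempty and the second having least point $a_+$. Lemma~\ref{lem:path-calculus}(\ref{itm:path-split}), applied to $\sigma_{u,v}$ at rank $q+3$, gives a corresponding point $b_+$ on $(u',v')$ with matching rank-$(q+2)$ characteristics. The remaining edge labels are projected from rank $q+3$ to rank $q+2$. This projection is legitimate because a rank-$(q+3)$ characteristic determines its rank-$(q+2)$ one.

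In case (2), first locate $w'$ as in case (1); if $w$ is an existing node, skip this step. Then take the branch edge $(w,a_+)$ with label $\tau=\sigma_{w,a_+}$. Its first color differs from the first color of the edge towards $v$, and of any other edge leaving $w$. In $\mathfrak B$, use the realization axiom $\forall x\exists y\,P_{q+2,\tau'}(x,y)$ applied at $S_j(w')$, where $j$ is the first color of $\tau$. Here $\tau'$ is a characteristic with $\tau=c_j\oplus\tau'$; it exists because the word $\mathfrak I(w,a_+)$ splits off its first point. If that remainder is empty, take $b_+=S_j(w')$. Lemma~\ref{lem:path-calculus}(\ref{itm:path-concat}) then gives the label $\tau$ on $(w',b_+)$. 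The branch lies in a fresh successor cone, so the meet closure is the intended one.

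Case (3) is the same construction directly at $v'$, with no coherence constraint. This uses the fact that $\mathfrak B\models T_n^1$, so every node has extensions realizing every nonempty characteristic.

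The main obstacle is case (2). There we must check that choosing $b_+$ in a fresh cone does not accidentally create extra meets, and that the first-color coherence condition keeps the new branch disjoint from the existing edges. Getting the rank bookkeeping right matters as well: splitting costs one rank, and concatenation with the one-point word must be done at rank $q+2$. This is why the hypothesis is stated at level $q+2$ rather than $q$.
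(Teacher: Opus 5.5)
Your proposal is correct and is essentially the paper's argument: a back-and-forth on finite meet closures that preserves edge characteristics. One rank is spent when an edge is split via Lemma~\ref{lem:path-calculus}(\ref{itm:path-split}), the realization axiom places the new node in an unused successor cone when it branches, and $E_i$ is read off from the rank-$2$ characteristic of a one-point word. Your induction on $q$ is the paper's EF invariant in another form. The only deviation is small: you realize the tail above $S_j(w')$ and concatenate with $c_j$, whereas the paper realizes the whole characteristic at $w'$ and uses that a rank-$\geq 2$ characteristic fixes the first color. You still implicitly need that fact to know the $j$-cone at $w'$ is unused.
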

\begin{proof}
    $\exists$ maintains finite meet closures $X\subseteq A$ and $Y\subseteq B$ and an isomorphism $h:X\to Y$ of rooted meet-trees which maps the played tuple to its mate. With $r$ rounds remaining, the invariant requires corresponding edges to carry the same rank-$(r+2)$ characteristic. It holds initially with $r=q$ by the hypothesis.

    For the induction step, suppose that $r+1$ rounds remain. Thus corresponding old edges carry the same rank-$m$ characteristic, where $m=r+3$. Suppose $\forall$ plays $c\in A$; a play in $B$ is handled symmetrically. If $c\in X$, $\exists$ plays $h(c)$, and restricting the old rank-$m$ characteristics to rank $m-1=r+2$ gives the invariant after the move. Otherwise, let
    $$p=\max_{\subseteq}\{c\wedge a:a\in X\}.$$
    This maximum exists because the displayed set is finite and consists of predecessors of $c$, hence is linearly ordered. Moreover $c\wedge a=p\wedge a$ for every $a\in X$.

    First suppose $p\in X$, and put $p'=h(p)$. Let $i$ be the first color of the transition word from $p$ to $c$. No edge $(p,v)$ of $X$ begins with color $i$: otherwise $c\wedge v$ would properly extend $p$, contrary to maximality. By Axiom~\ref{ax:characteristic-exhaustion}, there is a unique $\sigma\in\Sigma_n^{+}(m-1)$ with $P_{m-1,\sigma}(p,c)$. Axiom \MakeUppercase{\romannumeral 1} gives $c'$ with $P_{m-1,\sigma}(p',c')$. Since $m-1=r+2\geq2$, this characteristic determines the first color, so $c'$ lies in the corresponding unused cone.  Extending $h$ by $c\mapsto c'$ therefore gives isomorphic meet closures; the new edge has characteristic $\sigma$ on both sides.

    Now suppose $p\notin X$. Let $v$ be the least member of $X$ strictly above $p$, and let $u$ be the greatest member of $X$ strictly below $p$. Then $(u,v)$ is an edge of $X$ and $u\subsetneq p\subsetneq v$. Write $u'=h(u)$ and $v'=h(v)$. Since $\mathfrak I(u,v)\equiv_m\mathfrak I(u',v')$, and since both parts are nonempty and $I(p,v)$ has least point $p$, the splitting part of Lemma~\ref{lem:path-calculus}(\ref{itm:path-split}) (with $m-1\geq2$) supplies $u'\subsetneq p'\subsetneq v'$ such that
    $$\mathfrak I(u,p)\equiv_{m-1}\mathfrak I(u',p'),\qquad\mathfrak I(p,v)\equiv_{m-1}\mathfrak I(p',v').$$
    If $c=p$, $\exists$ plays $p'$. Otherwise, the first color from $p$ to $c$ differs from the first color from $p$ to $v$, since equal colors would make $c\wedge v$ properly extend $p$. Apply Axiom~\ref{ax:characteristic-exhaustion} to the pair $(p,c)$ and then Axiom \MakeUppercase{\romannumeral 1} at $p'$ to obtain $c'$ with the same rank-$(m-1)$ characteristic. Level $m-1\geq2$ fixes its first color, which differs from that of $\mathfrak I(p',v')$; hence $c'\wedge v'=p'$, and all new meets correspond. Extend $h$ by $p\mapsto p'$ and, when $c\neq p$, by $c\mapsto c'$.

    In both cases corresponding old edges carry the same rank-$(m-1)$ characteristic, as do corresponding new and split edges. Since $m-1=r+2$, after $\exists$'s reply $r$ rounds remain and the invariant is restored. At the end, the rooted meet-tree map preserves $\epsilon$, $\wedge$, $\subseteq$, and equality. Moreover, $E_i(s,t)$ holds exactly when $s\subsetneq t$ and $\mathfrak I(s,t)$ is the one-point word of color $i$, a property determined by its rank-$2$ characteristic. Thus the map on the played tuples is an $\mathcal L_n^\wedge$-partial isomorphism, and $\exists$ wins.
\end{proof}

Here comes our main results.

\begin{theorem}[Completeness \MakeUppercase{\romannumeral 1}]\label{thm:tree-completeness1}
    The theory $T_n^1$ is complete, thus
    $$T_n^1=\operatorname{Th}(\mathfrak T_{\omega^\omega}^{n,1-qe}).$$
    The set of its $\mathcal L_n^{\mathrm{rel}}$-consequences is $\operatorname{Th}(\mathfrak T_{\omega^\omega}^{n,\mathrm{rel}})$; under the graph/function definitional equivalence, this theory corresponds exactly to $\operatorname{Th}(\mathfrak T_{\omega^\omega}^n)$.
\end{theorem}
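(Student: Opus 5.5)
The plan is to derive completeness from Lemma~\ref{lem:composition1} applied to the empty tuple ($k=0$). First fix the language. Every $\mathcal L_n^{1-qe}$-formula is $T_n^1$-equivalent to an $\mathcal L_n^\wedge$-formula, because Axiom~\ref{ax:path-definitions} defines each $P_{q,\sigma}$ by the $\mathcal L_n^{\mathrm{rel}}$-formula $x\subsetneq y\wedge\varphi_\sigma(x,y)$. Moreover $\epsilon$ and $\wedge$ are $T_n^1$-definable from $\subseteq$ by Axiom~\ref{ax:tree-order}. It therefore suffices to show that any two models $\mathfrak A,\mathfrak B\models T_n^1$ satisfy the same $\mathcal L_n^\wedge$-sentences.

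Fix a sentence $\psi$ of quantifier rank $q$. For the empty tuple the meet closure is $\{\epsilon\}$, which has no edges. Hence the empty tuples of $\mathfrak A$ and $\mathfrak B$ realize the same (trivial) $(q+2)$ \MakeUppercase{\romannumeral 1}-diagram, namely the one with $\delta_\Delta=\top$. Lemma~\ref{lem:composition1} then gives $\mathfrak A\models\psi\iff\mathfrak B\models\psi$.

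There is one point to check here. The proof of the lemma starts its invariant from a meet closure containing $\epsilon$, and when $\forall$'s first move $c$ differs from $\epsilon$, the case $p=\epsilon\in X$ applies. That case uses only Axiom~\ref{ax:characteristic-exhaustion} and the realization scheme at $\epsilon$, so nothing further is needed beyond confirming that the edge-free diagram is admitted; the proof's case analysis already covers $k=0$. I expect this degenerate-case check, together with tracking rank shifts so that the characteristic rank stays at least $2$ and first colors are determined, to be the only delicate point. It is routine given the lemma.

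Having shown that all models are elementarily equivalent, $T_n^1$ is complete. Since $\mathfrak T_{\omega^\omega}^{n,1-qe}\models T_n^1$, as noted after the axioms because every $\sigma\in\Sigma_n^+(q)$ is realized on an ordinal $<\omega^\omega$ and $\omega^\omega$ is additively indecomposable, we obtain $T_n^1=\operatorname{Th}(\mathfrak T_{\omega^\omega}^{n,1-qe})$. The $\mathcal L_n^{\mathrm{rel}}$-consequences of a complete theory with this model are exactly the $\mathcal L_n^{\mathrm{rel}}$-reduct theory $\operatorname{Th}(\mathfrak T_{\omega^\omega}^{n,\mathrm{rel}})$. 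Finally, the translation $E_i(x,y)\leftrightarrow y=S_i(x)$, justified by Axiom~\ref{ax:named-successors}, is a definitional equivalence, so this theory corresponds to $\operatorname{Th}(\mathfrak T_{\omega^\omega}^n)$ in $\mathcal L_{n,S}$.
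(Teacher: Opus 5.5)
Your proposal is correct and is essentially the paper's proof: apply Lemma~\ref{lem:composition1} to the empty tuple, whose $(q+2)$ \MakeUppercase{\romannumeral 1}-diagram is the trivial edgeless one, to get $\mathfrak A\equiv_q\mathfrak B$ in $\mathcal L_n^\wedge$ for every $q$, and then eliminate the path predicates using Axiom~\ref{ax:path-definitions}. Your extra bookkeeping makes explicit what the paper leaves implicit: $\mathfrak T_{\omega^\omega}^{n,1-qe}\models T_n^1$ gives consistency, and the reduct and definitional-equivalence step gives the $\mathcal L_n^{\mathrm{rel}}$ and $\mathcal L_{n,S}$ clauses of the statement.
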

\begin{proof}
    Let $\mathfrak A,\mathfrak B\models T_n^1$. For every $q$, their empty tuples have the same $(q+2)$ \MakeUppercase{\romannumeral 1}-diagram, consisting only of the root and having no edges. Lemma~\ref{lem:composition1} gives $\mathfrak A\equiv_q\mathfrak B$ in $\mathcal L_n^\wedge$ for every $q$. Given an $\mathcal L_n^{1-qe}$-sentence, Axiom~\ref{ax:path-definitions} expands its finitely many path predicates into an $\mathcal L_n^\wedge$-sentence of some finite rank, so $\mathfrak A\equiv\mathfrak B$ in the expanded language.
\end{proof}

\begin{theorem}[Quantifier elimination and decidability \MakeUppercase{\romannumeral 1}]\label{thm:1-qe}
    The theory $T_n^1$ is decidable, and admits effective quantifier elimination. More precisely, if $\varphi(\bar x)$ is an $\mathcal L_n^\wedge$-formula of quantifier rank $q$, then it is equivalent modulo $T_n^1$ to
    $$\bigvee_{\Delta\in\mathcal D_\varphi}\delta_\Delta(\bar x),$$
    where $\mathcal D_\varphi$ is the finite set of $(q+2)$ \MakeUppercase{\romannumeral 1}-diagrams $\Delta$ for which
    $$T_n^1\vdash\exists\bar x\,(\delta_\Delta(\bar x)\wedge\varphi(\bar x)).$$
    For an arbitrary $\mathcal L_n^{1-qe}$-formula $\psi$, first expand all its $P_{q,\sigma}$-predicates by Axiom~\ref{ax:path-definitions}. If the resulting $\mathcal L_n^\wedge$-formula has quantifier rank $Q$, the corresponding quantifier-free normal form uses $(Q+2)$ \MakeUppercase{\romannumeral 1}-diagrams.
\end{theorem}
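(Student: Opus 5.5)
The plan is to derive the normal form directly from Lemma~\ref{lem:composition1} together with the completeness of $T_n^1$ (Theorem~\ref{thm:tree-completeness1}), and then read off decidability and effectiveness from the recursive enumerability of the axioms.

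\emph{Step 1: the diagrams partition the space.} Fix $\bar x$ and $N=q+2$. In any $\mathfrak M\models T_n^1$, each tuple $\bar a$ realizes exactly one level-$N$ \MakeUppercase{\romannumeral 1}-diagram. The meet-closure data are determined by \ref{ax:tree-order}, and the edge labels are determined by Axiom~\ref{ax:characteristic-exhaustion} together with mutual exclusivity of distinct characteristics. Consequently $T_n^1\vdash\bigvee_\Delta\delta_\Delta(\bar x)$, and the disjuncts are pairwise contradictory.

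\emph{Step 2: each diagram decides $\varphi$.} I claim that for every $\Delta$ either $T_n^1\vdash\forall\bar x(\delta_\Delta\to\varphi)$ or $T_n^1\vdash\forall\bar x(\delta_\Delta\to\neg\varphi)$. Suppose $\mathfrak A\models\delta_\Delta(\bar a)\wedge\varphi(\bar a)$ and $\mathfrak B\models\delta_\Delta(\bar b)$. Then $\bar a$ and $\bar b$ realize the same $(q+2)$-diagram, so Lemma~\ref{lem:composition1} gives $\mathfrak B\models\varphi(\bar b)$, since $\varphi$ has rank $q$. Hence if $\delta_\Delta\wedge\varphi$ is consistent with $T_n^1$, the first alternative holds. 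Otherwise completeness yields $T_n^1\vdash\neg\exists\bar x(\delta_\Delta\wedge\varphi)$, which is the second alternative. Combining this with Step~1 shows that $\varphi$ is equivalent to $\bigvee_{\Delta\in\mathcal D_\varphi}\delta_\Delta$, where $\mathcal D_\varphi$ is as defined in the statement.

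\emph{Step 3: decidability.} The axioms \ref{ax:tree-order}--\ref{ax:characteristic-exhaustion} and the realization scheme form a recursive set. The set $\Sigma_n^+(q)$ is computable by Lemma~\ref{lem:path-calculus}(\ref{itm:path-effective}), and each $\varphi_\sigma$ is obtained by effective relativization. So $T_n^1$ is recursively axiomatized, and by Theorem~\ref{thm:tree-completeness1} it is complete, hence decidable. Effectiveness of the quantifier elimination then follows. For each of the finitely many diagrams, which can be listed effectively from the finite meet-tree shapes on $k$ generators and the computable $\Sigma_n^+(q+2)$, we decide the sentence $\exists\bar x(\delta_\Delta\wedge\varphi)$. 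To do so we expand its $P$-atoms via Axiom~\ref{ax:path-definitions} and run the decision procedure.

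\emph{Step 4: general formulas.} For an arbitrary $\mathcal L_n^{1-qe}$-formula $\psi$, we first replace each $P_{q',\sigma}$ by $\varphi_\sigma$; this is $T_n^1$-equivalent by Axiom~\ref{ax:path-definitions}. We then apply the previous steps at the resulting rank $Q$.

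The main subtlety I expect is purely bookkeeping. Diagrams must also exclude inconsistent shapes, so that Step~1 holds. One must also note that the edge terms are meet terms, so that $\delta_\Delta$ is genuinely quantifier-free in $\mathcal L_n^{1-qe}$. Both points are handled by restricting $\mathcal D_\varphi$ to consistent diagrams, which the definition already does.
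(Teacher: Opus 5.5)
Your proposal is correct and follows essentially the same route as the paper. Every tuple realizes a unique $(q+2)$ \MakeUppercase{\romannumeral 1}-diagram, Lemma~\ref{lem:composition1} makes each consistent diagram decide $\varphi$, and recursive axiomatizability (via Lemma~\ref{lem:path-calculus}(\ref{itm:path-effective})) together with completeness (Theorem~\ref{thm:tree-completeness1}) gives decidability and an effective selection of $\mathcal D_\varphi$; your Steps~1--4 spell out in more detail what the paper's proof states in a few lines.
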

\begin{proof}
    Every tuple has a unique level-$(q+2)$ diagram. By Lemma~\ref{lem:composition1}, tuples in possibly different models which have the same diagram agree on $\varphi$. Hence every consistent diagram decides $\varphi$, and the displayed disjunction is equivalent to $\varphi$. By Lemma~\ref{lem:path-calculus}(\ref{itm:path-effective}), the relevant characteristic sentences are computable, so the schemes in Axioms~\ref{ax:path-definitions}--\ref{ax:characteristic-exhaustion} and the finite lists of diagrams are recursive. Thus $T_n^1$ is recursively axiomatized. Since it is complete (and consistent), simultaneous proof search for a sentence and its negation decides its theorem set and selects $\mathcal D_\varphi$.
\end{proof}

And, as a corollary, we get the following result.
\begin{theorem}[Standard trees of type \MakeUppercase{\romannumeral 1}]\label{thm:1-classify}
    A standard tree $\mathfrak{T}_\alpha^n$ is elementarily equivalent to the canonical tree $\mathfrak{T}_{\omega^\omega}^n$ if and only if $(\alpha,<)$ is elementarily equivalent to $\omega^\omega$.
\end{theorem}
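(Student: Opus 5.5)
Two directions.

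($\Leftarrow$) Suppose $(\alpha,<)\equiv\omega^\omega$. By Theorem~\ref{thm:tree-completeness1} it suffices to check that the expansion $\mathfrak T_\alpha^{n,1-qe}$ is a model of $T_n^1$. Axioms \ref{ax:tree-order}--\ref{ax:characteristic-exhaustion} hold in every standard tree, since they only describe the tree order, named successors, the path predicates and the fact that every transition word has some characteristic. The only axiom to verify is the realization scheme $\forall x\,\exists y\,P_{q,\sigma}(x,y)$.

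So fix a node $x$ of height $\beta<\alpha$ and $\sigma\in\Sigma_n^+(q)$. By Lemma~\ref{lem:path-calculus}(\ref{itm:path-effective}), or Corollary~\ref{thm:improved-bound}, $\sigma$ has a nonempty representative word $w$ of order type $\theta<\omega^\omega$. We need $\beta+\theta<\alpha$. By Theorem~\ref{theorem:DMT}, $\alpha\equiv\omega^\omega$ means the Cantor normal form of $\alpha$ has no terms below $\omega^\omega$. Hence $\alpha$ is a limit of the form $\alpha_0$ with every term $\geq\omega^\omega$, so $\beta+\omega^\omega\leq\alpha$ for all $\beta<\alpha$. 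This closure of $\alpha$ under adding ordinals $<\omega^\omega$ is exactly the fact used in Lemma~\ref{lem:universal-large}. Then $y=x^\frown w$ is a node of $n^{<\alpha}$ with $\mathfrak I(x,y)\cong w\models\sigma$. Thus $\mathfrak T_\alpha^{n,1-qe}\models T_n^1$, and completeness gives $\mathfrak T^n_\alpha\equiv\mathfrak T^n_{\omega^\omega}$ in the reduct.

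($\Rightarrow$) Here I would interpret the order type of a branch in the tree. The branch through the constant-$0$ sequence is (up to its top) the set of nodes $z$ such that every proper extension step below $z$ has color $0$. This set is first-order definable, for instance as $\{z:\forall t\subsetneq z\; C_0(t;z)\}$, and by Theorem~\ref{thm:standard-tree} it is order-isomorphic to $\alpha$ under $\subsetneq$. This gives a uniform interpretation of $(\alpha,<)$ in $\mathfrak T^n_\alpha$, so $\mathfrak T_\alpha^n\equiv\mathfrak T_{\omega^\omega}^n$ implies $(\alpha,<)\equiv(\omega^\omega,<)$. Relativizing each sentence about the linear order to the defined set transfers it.

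The main obstacle is just to check carefully that the definable set really is the whole $0$-branch and has order type exactly $\alpha$, including at limit heights. This uses the fact that every node at a limit height is determined by its predecessors, as in the proof of Theorem~\ref{thm:standard-tree}, and that $C_0(t;z)$ is defined via the $E_0$-successor of $t$ lying below $z$. The additive closure in the first direction is routine from the Cantor normal form.
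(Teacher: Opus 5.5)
Your proposal is correct and follows the paper's proof. The forward direction relativizes linear-order sentences to the definable constant-$0$ (leftmost) branch, which is order-isomorphic to $(\alpha,<)$; the converse uses the Mostowski--Tarski description of $\alpha\equiv\omega^\omega$ to get $\beta+\gamma<\alpha$ for all $\beta<\alpha$ and $\gamma<\omega^\omega$, checks that $\mathfrak T_\alpha^{n,1-qe}\models T_n^1$, and then invokes completeness of $T_n^1$. You also supply details the paper leaves implicit: the defining formula $\forall t\subsetneq z\,C_0(t;z)$ for the branch, and the explicit witness for the realization scheme, obtained by appending to $x$ a representative of $\sigma$ of order type below $\omega^\omega$.
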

\begin{proof}
    We prove the two implications separately.

    In $\mathfrak{T}_\alpha^n$ the leftmost path is first‑order definable in $\mathcal L_{n,S}$ and, with the order $\subseteq$, is isomorphic to $(\alpha,<)$. Hence every sentence in the language of linear orders can be translated into an $\mathcal L_{n,S}$-sentence by relativizing its quantifiers to the definable leftmost path. From $\mathfrak{T}_\alpha^n\equiv\mathfrak{T}_{\omega^\omega}^n$ we obtain $(\alpha,<)\equiv(\omega^\omega,<)$.

    Assume $(\alpha,<)\equiv(\omega^\omega,<)$. By the Mostowski–Tarski analysis of the first‑order theory of well‑orderings (see \S\ref{sec:preliminaries}), $\alpha\equiv\omega^\omega$ implies that either $\alpha=\omega^\omega$ or $\alpha>\omega^\omega$ and the Cantor normal form of $\alpha$ contains no summand of the form $\omega^k$ with $k<\omega$; in particular, all exponents appearing in the normal form are at least $\omega$. Consequently, for every $\beta<\alpha$ and every $\gamma<\omega^\omega$ we have $\beta+\gamma<\alpha$.

    Now expand $\mathfrak{T}_\alpha^n$ to the language $\mathcal L_n^{1-qe}$ (see \S\ref{sec:type1syn}) by interpreting the predicates $P_{q,\sigma}$ as in the definition of path characteristics. We verify that this expansion satisfies the theory $T_n^1$, and the result follows easily.
\end{proof}

\subsection{Type \MakeUppercase{\romannumeral 2} canonical trees}
\label{sec:type2syn}

Recall that the \emph{Type~{\MakeUppercase{\romannumeral 2}}-$\theta$ canonical tree} for a limit ordinal $\theta<\omega^\omega$ is the structure $\mathfrak{T}_{\theta}^n$.   Every such $\theta$ is definable by a first‑order formula in the language of linear orders (Mostowski–Tarski, cf.\ \S\ref{subsec:babyprelim}), hence the property ``the height of a node is at least $\theta$'' is expressible in the language of trees. For a smooth quantifier‑elimination we introduce explicit predicates that record the \emph{residual height} of a node.

Write $\theta$ in expanded Cantor normal form $\omega^{\theta_1}+\dots+\omega^{\theta_m}$ with $\theta_1\ge\cdots\ge\theta_m\ge 1$.  The possible residual heights of nodes in $\mathfrak{T}_\theta^n$ are exactly the tail sums
$$R_\theta=\bigl\{\,\omega^{\theta_i}+\dots+\omega^{\theta_m}:1\le i\le m\,\bigr\}.$$
Equivalently, $R_\theta=\{\,\mu>0:\exists\gamma<\theta\;(\theta=\gamma+\mu)\,\}$; this is a finite set, so we may enumerate its elements as $\theta=\mu_1>\dots>\mu_{m}$. For every $\mu_i\in R_\theta$ we introduce a unary predicate $H_i$ with the intended interpretation
$$H_i(x)\;\Longleftrightarrow\; \operatorname{ht}(x)+\mu_i=\theta.$$
Let
$$\mathcal{L}_{n,\theta}^{2-qe}= \mathcal{L}_n^{1-qe}\cup\{H_1,\dots,H_{m}\}.$$

The theory $T_{n,\theta}^2$ in the language $\mathcal{L}_{n,\theta}^{2-qe}$ extends the base theory $T_n^0$ (see Section~\ref{sec:type1syn}) together with the following height axioms.

\begin{enumerate}[label=\textnormal{(B\arabic*)}]
    \item\label{ax:2H-partition}
        $$\forall x\Bigl(\bigvee_{i}H_i(x)\Bigr),\qquad \forall x(H_i(x)\to\neg H_j(x))(i\neq j).$$
    \item\label{ax:2H-root}
        $$H_1(\epsilon).$$
    \item\label{ax:2H-edge}
        If $x\subsetneq y$, $H_i(x)$, and $H_j(y)$, then $i\leq j$. Moreover, for every $q<\omega$ and $\sigma\in\Sigma_n^+(q)$, if $P_{q,\sigma}(x,y)$, then there is an ordinal $\lambda$ such that $\sigma$ is realizable on a coloring of $\lambda$ and $\lambda+\mu_j=\mu_i$.
    \item\label{ax:2H-realizability}
        Let $q<\omega$, $i,j\in\{1,\dots,m\}$.

If $\sigma\in\Sigma_n^+(q)$ is realizable on a colored ordinal of some order type $\alpha>0$ satisfying $\mu_i=\alpha+\mu_j$, then
        $$\forall x\,\bigl(H_i(x)\to\exists y\,(P_{q,\sigma}(x,y)\land H_j(y))\bigr).$$

If no positive $\alpha$ satisfies both $\mu_i=\alpha+\mu_j$ and realizability of $\sigma$ at $\alpha$, then
$$\forall x\,\forall y\bigl((H_i(x)\land H_j(y))\to\neg P_{q,\sigma}(x,y)\bigr).$$
\end{enumerate}
The quantification over $\lambda$ in Axiom~\ref{ax:2H-edge} is metatheoretic. For fixed $\mu_i$ and $\mu_j$, however, the set of ordinals $\lambda$ satisfying $\lambda+\mu_j=\mu_i$ is effectively describable below $\omega^\omega$. Theorem~\ref{lem:interval-realizabiliy} therefore makes this an effective axiom scheme.

As before, here comes the definition of \emph{type-{\MakeUppercase{\romannumeral 2}}-$\theta$ level-$N$ diagrams}. Fix $\theta<\omega^\omega$ and its residual height set $R_\theta=\{\mu_1>\mu_2>\dots>\mu_m\}$. A $\Delta$ (for $N\ge 2$) consists of:
\begin{itemize}
    \item the isomorphism type of a finite meet tree on a set of nodes, together with a labelling $\lambda\colon \text{nodes}\to\{1,\dots,m\}$ such that
    \begin{itemize}
        \item the root receives label $1$ (i.e.\ $\mu_1=\theta$);
        \item if $u\subsetneq v$ are nodes, then $\lambda(u)\leq\lambda(v)$ (so that $\mu_{\lambda(u)}\geq\mu_{\lambda(v)}$);
    \end{itemize}
    \item for every edge $(u,v)$ of the meet closure (i.e.\ $u\subsetneq v$ and no labelled node lies strictly between them), a distinguished characteristic $\sigma_{u,v}\in\Sigma_n^{+}(N)$.
\end{itemize}
The data satisfy the coherence conditions:
\begin{enumerate}
    \item[(i)] for any node $u$, the first colors of the characteristics $\sigma_{u,v}$ for the immediate successors $v$ of $u$ are pairwise distinct;
    \item[(ii)] for every edge $(u,v)$ with $\lambda(u)=i$, $\lambda(v)=j$, there exists an ordinal $\lambda$ such that $\lambda+\mu_j=\mu_i$ and $\sigma_{u,v}$ is realizable on a colored ordinal of length $\lambda$ (this is still decidable by Theorem~\ref{lem:interval-realizabiliy}).
\end{enumerate}
Associated with such a diagram $\Delta$ is a quantifier‑free formula $\delta_\Delta(\bar x)$ of $\mathcal L_{n,\theta}^{2-qe}$, expressing that the tuple $\bar x$ realizes $\Delta$ up to level‑$N$ characteristics.  Concretely, $\delta_\Delta(\bar x)$ is the conjunction of:
\begin{itemize}
    \item all equalities and inequalities among the meet terms of $\bar x$ (including $\epsilon$);
    \item all order relations $u\subseteq v$ and $u\not\subseteq v$ between these terms;
    \item for each meet term $t$, the atom $H_{\lambda(t)}(t)$;
    \item for every edge $(u,v)$ of the diagram, the atom $P_{N,\sigma_{u,v}}(u,v)$;
    \item for every edge $(u,v)$ and every $\sigma\in\Sigma_n^{+}(N)$ with $\sigma\neq\sigma_{u,v}$, the negated atom $\neg P_{N,\sigma}(u,v)$.
\end{itemize}
As before, for fixed $k$ and $N$ there are only finitely many such diagrams, and in any model of $T_{n,\theta}^2$ every tuple satisfies exactly one $\delta_\Delta$.
We first note that, for each $i$, the condition $\operatorname{ht}(x)+\mu_i=\theta$ is defined by a fixed $\mathcal L_n^\wedge$-formula in every model of $T_{n,\theta}^2$, not only in the standard tree. Indeed, since $\theta<\omega^\omega$, a sufficiently complex formula can successively record the numbers of limit points, limit points of limit points, and so on along the path from $\epsilon$ to $x$; the expanded Cantor normal form of $\theta$ makes this a finite process. Thus $H_i$ is considered as a shortcut for this formula.

Then there is the associated composition lemma.
\begin{lemma}[Composition lemma \MakeUppercase{\romannumeral 2}]\label{lem:composition2}
    Let $\mathfrak A,\mathfrak B\models T_{n,\theta}^2$, let $\bar a\in A^k$, $\bar b\in B^k$, and let $q\ge 0$. If $\bar a$ and $\bar b$ realize the same $(q+t)$ type-\MakeUppercase{\romannumeral 2}-$\theta$ diagram, where $t$ is a sufficiently large padding, then they satisfy the same $\mathcal L_n^\wedge\cup\{H_1,\dots,H_m\}$-formulae of quantifier rank at most $q$.
\end{lemma}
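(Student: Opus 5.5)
We prove the lemma by adapting the EF-game argument of Lemma~\ref{lem:composition1}. The new ingredient is the height labels. We choose the padding $t$ so large that each $H_i$, read as its defining $\mathcal L_n^\wedge$-formula, has quantifier rank below $t-2$; in particular $t\geq 2$. We also want a rank-$(t-1)$ characteristic of a transition word $\mathfrak I(u,v)$ to determine its residual behaviour. Concretely: for every pair $i\le j$, whether a word with that characteristic can sit between a node of label $i$ and a node of label $j$ should be determined, in every model of $T_{n,\theta}^2$. Since $\theta<\omega^\omega$ has finitely many Cantor-normal-form terms, this is a finite requirement, and Theorem~\ref{theorem:DMT} applied to the definable path segments lets us fix such a $t$.

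$\exists$ maintains finite meet closures $X\subseteq A$, $Y\subseteq B$ and a meet-tree isomorphism $h\colon X\to Y$. With $r$ rounds remaining, the invariant is that corresponding nodes carry the same $H$-label and corresponding edges carry the same rank-$(r+t)$ characteristic. Suppose $\forall$ plays $c\in A$. If $c\in X$, $\exists$ answers with $h(c)$. Otherwise let $p=\max\{c\wedge a:a\in X\}$, exactly as in Lemma~\ref{lem:composition1}, and split into two cases.

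\emph{Case $p\in X$.} Let $\sigma$ be the rank-$(r+t-1)$ characteristic of $\mathfrak I(p,c)$, and let $j$ be the label of $c$. Since $\mathfrak A$ witnesses that $\sigma$ is realizable on some $\lambda$ with $\lambda+\mu_j=\mu_{\lambda(p)}$, the first clause of Axiom~\ref{ax:2H-realizability} applied at $h(p)$ gives $c'$ with $P(h(p),c')$ and $H_j(c')$. Because the characteristic fixes the first colour, $c'$ lies in the corresponding unused cone.

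\emph{Case $p\notin X$.} Here $p$ lies inside an edge $(u,v)$. We use Lemma~\ref{lem:path-calculus}(\ref{itm:path-split}) to find a split point $p'$ on the edge $(h(u),h(v))$ with matching lower-rank characteristics on both halves. We must then make $p'$ carry the same $H$-label as $p$. The label of $p$ is a property of the prefix $\mathfrak I(\epsilon,p)$, which in turn is the concatenation of the known edge words from the root down to $u$ followed by $\mathfrak I(u,p)$. By Lemma~\ref{lem:path-calculus}(\ref{itm:path-concat}) the characteristic of this concatenation is computed from the pieces. So once the rank exceeds the rank of the $H$-formulas, which is what the padding guarantees, the splitting preserves labels. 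The reply for $c\neq p$ is then obtained as in the first case, working from $p'$.

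At the end of the game, the labels $H_i$ are preserved by the invariant. The relations $\subseteq$, $\wedge$, $\epsilon$ and $E_i$ are preserved exactly as in Lemma~\ref{lem:composition1}.

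The main obstacle is the label bookkeeping in the split case. We must check that the $H$-label of a new node is a function of the rank-$(r+t)$ data of the path from the root. We must also check that the realizability conditions of Axioms~\ref{ax:2H-edge}--\ref{ax:2H-realizability} supply witnesses with the right residual height in an arbitrary model, not just in the standard tree. I would handle both points by fixing $t$ to exceed the quantifier rank of the defining formulas of all $H_i$ by at least two, so that the argument mirrors Lemma~\ref{lem:composition1} with the labels carried along as additional invariant data.
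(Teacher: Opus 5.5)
Your proposal is correct and follows the paper's proof: the EF game of Lemma~\ref{lem:composition1} with height labels added to the invariant, Axiom~\ref{ax:2H-edge} in $\mathfrak A$ followed by the first clause of Axiom~\ref{ax:2H-realizability} at $h(p)$ when $p\in X$, and Lemma~\ref{lem:path-calculus}(\ref{itm:path-split}) when $p\notin X$. Your argument that the split point $p'$ inherits the label of $p$ is more precise than the paper's appeal to ``the same combinatorial situation'' in $\mathfrak B$: you use $\mathfrak I(\epsilon,p)=\mathfrak I(\epsilon,u)+\mathfrak I(u,p)$ and the sum lemma to get $\mathfrak I(\epsilon,p)\equiv_{r+t-1}\mathfrak I(\epsilon,p')$, and this rests on the paper's convention that $H_i$ abbreviates a path-relativized formula (alternatively it follows from Axioms~\ref{ax:2H-root} and~\ref{ax:2H-edge} once $t$ is large). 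One wording fix: in an arbitrary model of $T_{n,\theta}^2$, the realizability of $\sigma$ on some $\lambda$ with $\lambda+\mu_j=\mu_i$ comes from Axiom~\ref{ax:2H-edge}, not from $\mathfrak A$ directly.
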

\begin{proof}
    The proof parallels that of Lemma~\ref{lem:composition1}, with the additional task of preserving the height labels. We outline the main modifications.

    $\exists$ maintains a finite meet closure $X\subseteq A$, $Y\subseteq B$, an isomorphism $h\colon X\to Y$ of labelled meet trees, and the invariant that with $r$ rounds remaining, corresponding edges carry the same rank-$(r+t)$ characteristic. Initially the hypothesis provides this with $r=q$.

    If $\forall$ plays $c\in A$, let $p=\max_{\subseteq}\{c\wedge a : a\in X\}$. If $p\in X$, let $p'=h(p)$ and let $H_i$ be the height label of $p$ (hence also of $p'$). Let $\sigma$ be the unique rank-$(r+t)$ characteristic with $P_{r+t,\sigma}(p,c)$; its first color differs from those of the edges in $X$ starting at $p$. Let $H_j$ be the height label of $c$ in $\mathfrak A$. From Axiom~\ref{ax:2H-edge} we obtain that $\sigma$ is realizable on an ordinal of length $\lambda$ with $\lambda+\mu_j=\mu_i$. By Axiom~\ref{ax:2H-realizability} applied to $p'$, there exists $c'\in B$ with $H_j(c')$ and $P_{r+t,\sigma}(p',c')$. $\exists$ plays $c'$, and the map $c\mapsto c'$ extends $h$ to an isomorphism of the enlarged meet closures; the new edge carries the same characteristic $\sigma$ on both sides.

    Now suppose $p\notin X$. Let $v$ be the least element of $X$ strictly above $p$, and $u$ the greatest element of $X$ strictly below $p$; then $(u,v)$ is an edge of $X$. Write $u'=h(u)$ and $v'=h(v)$. Let $H_a,H_b$ be the labels of $u,v$, respectively, and let $\sigma_{u,v}$ be the edge characteristic of rank $m=r+t$. Since $\mathfrak I(u,v)\equiv_m\mathfrak I(u',v')$ and $I(p,v)$ has a least point, Lemma~\ref{lem:path-calculus}(\ref{itm:path-split}) yields $p'\in B$ with $u'\subsetneq p'\subsetneq v'$ such that
    $$\mathfrak I(u,p)\equiv_{m-1}\mathfrak I(u',p'),\qquad \mathfrak I(p,v)\equiv_{m-1}\mathfrak I(p',v').$$
    In $\mathfrak A$, the node $p$ has some height label $H_c$.  The lengths of the left and right parts satisfy $\operatorname{len}(u,p)+\operatorname{len}(p,v)=\lambda$ with $\lambda+\mu_b=\mu_a$, and $\operatorname{len}(p,v)+\mu_b=\mu_c$.  Hence $\operatorname{len}(u,p)+\mu_c=\mu_a$.  The realized characteristics $\sigma_{u,p}$ and $\sigma_{p,v}$ (of rank $m-1$) together with the height labels satisfy the realizability conditions expressed in Axioms~\ref{ax:2H-edge} and ~\ref{ax:2H-realizability}.  Because the same combinatorial situation occurs in $\mathfrak B$ with the isomorphic edge $(u',v')$ and the same rank-$(m-1)$ characteristics, there exists $p'$ in $\mathfrak B$ with the same height label $H_c$ and with the same left and right characteristics.  $\exists$ plays this $p'$ as the response for $p$.  If $c\neq p$, then $c$ is above $p$ and not in $X$; we then apply the argument of the first case inside the cone above $p$ (now $p$ is in the closure).  The new elements $p',c'$ receive the same height labels as their counterparts, and the invariant is restored with $r-1$ rounds remaining.

    After $q$ rounds the map on the played tuples respects $\epsilon$, $\wedge$, $\subseteq$, $E_i$, and all height predicates $H_j$.  Therefore it is a partial $\mathcal L_n^\wedge\cup\{H_1,\dots,H_m\}$-isomorphism, and $\exists$ wins the EF game. 
\end{proof}

Then we get completeness and decidability and quantifier elimination.

\begin{theorem}[Completeness \MakeUppercase{\romannumeral 2}]\label{thm:tree-completeness2}
    For every nonzero limit ordinal $\theta<\omega^\omega$, the theory $T_{n,\theta}^2$ is complete and coincides with the first‑order theory of the canonical structure $\mathfrak T_{\theta}^{n,2-qe}$. Consequently, its $\mathcal L_n^{\mathrm{rel}}$-reduct is $\operatorname{Th}(\mathfrak T_{\theta}^{n,\mathrm{rel}})$.
\end{theorem}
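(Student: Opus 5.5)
The argument will mirror the proof of Theorem~\ref{thm:tree-completeness1}, with two extra steps: check that the canonical structure is a model, and handle the height labels in the composition argument.

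\emph{Step 1: the canonical structure is a model.} We first verify $\mathfrak T_\theta^{n,2-qe}\models T_{n,\theta}^2$.
\begin{itemize}
\item Axioms \ref{ax:tree-order}--\ref{ax:characteristic-exhaustion} of $T_n^0$ hold in every standard tree, exactly as in Section~\ref{sec:type1syn}.
\item For \ref{ax:2H-partition} and \ref{ax:2H-root}, every node $x$ of height $\beta<\theta$ has a unique $\mu$ with $\beta+\mu=\theta$. This $\mu$ is a tail sum of the expanded Cantor normal form, hence lies in $R_\theta$, and the root has residual height $\theta=\mu_1$.
\item For \ref{ax:2H-edge}, let $x\subsetneq y$ with $H_i(x)$ and $H_j(y)$. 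The word $\mathfrak I(x,y)$ has order type $\lambda=\operatorname{ht}(y)-\operatorname{ht}(x)$, and $\lambda+\mu_j=\mu_i$. The word itself witnesses that its characteristic is realizable on $\lambda$, and $\mu_i\ge\mu_j$ gives $i\le j$.
\item For \ref{ax:2H-realizability}, suppose $\sigma$ is realized by a colored ordinal $\mathfrak C$ of type $\alpha>0$ with $\mu_i=\alpha+\mu_j$, and let $x$ have height $\gamma$ with $\gamma+\mu_i=\theta$. Let $y=x^\frown s$, where $s$ is the word coding $\mathfrak C$. Then $y$ is a node, since $\gamma+\alpha<\gamma+\mu_i=\theta$ because $\mu_j>0$. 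We get $P_{q,\sigma}(x,y)$ and $\operatorname{ht}(y)+\mu_j=\theta$, so $H_j(y)$.
\item For the negative half of \ref{ax:2H-realizability}: if such a $y$ existed, $\mathfrak I(x,y)$ itself would be a realization of positive length $\alpha$ with $\mu_i=\alpha+\mu_j$, a contradiction.
\end{itemize}

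\emph{Step 2: completeness.} Let $\mathfrak A,\mathfrak B\models T_{n,\theta}^2$. For each $q$, the empty tuples have the same $(q+t)$ type-\MakeUppercase{\romannumeral 2}-$\theta$ diagram: only the root $\epsilon$, labelled $1$ by \ref{ax:2H-root}, and no edges. Lemma~\ref{lem:composition2} then gives $\mathfrak A\equiv_q\mathfrak B$ in $\mathcal L_n^\wedge\cup\{H_1,\dots,H_m\}$ for every $q$. Axiom~\ref{ax:path-definitions} translates each $P_{q,\sigma}$ into an $\mathcal L_n^\wedge$-formula of bounded rank, so $\mathfrak A\equiv\mathfrak B$ in the full language $\mathcal L_{n,\theta}^{2-qe}$. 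Hence $T_{n,\theta}^2$ is complete. Since it is consistent by Step~1, it equals $\operatorname{Th}(\mathfrak T_\theta^{n,2-qe})$.

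\emph{Step 3: the reduct.} The $\mathcal L_n^{\mathrm{rel}}$-reduct claim is immediate: the reduct of a complete theory of the expansion is the theory of the reduct structure. Under the graph/function equivalence $E_i(x,y)\leftrightarrow y=S_i(x)$, it corresponds to $\operatorname{Th}(\mathfrak T_\theta^n)$, exactly as in Theorem~\ref{thm:tree-completeness1}.

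\emph{Main obstacle.} The delicate step is the reliance on Lemma~\ref{lem:composition2}, specifically the splitting case. There the response $p'$ must carry the same height label as $p$. Splitting via Lemma~\ref{lem:path-calculus}(\ref{itm:path-split}) gives matching characteristics but not obviously matching labels. I would fix this by taking the padding $t$ large enough that a rank-$t$ characteristic of $\mathfrak I(p,v)$ determines the order type of that word modulo the finitely many distinctions needed below $\omega^\omega$. Theorem~\ref{theorem:DMT} makes each $\theta$-related ordinal definable, so rank-$(m-1)$ equivalence of the right parts forces equal residual heights. Together with the label of $v$ and the relation $\operatorname{len}(p,v)+\mu_b=\mu_c$, this forces the label of $p'$ to equal that of $p$.
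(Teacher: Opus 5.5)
Your proposal is correct and is essentially the paper's proof: in every model the empty tuple has the one-node diagram (root labelled $1$ by Axiom~\ref{ax:2H-root}, no edges), and Lemma~\ref{lem:composition2} then gives $\equiv_q$ for all $q$. Your Step~1 adds the check $\mathfrak T_\theta^{n,2-qe}\models T_{n,\theta}^2$, which the paper leaves implicit but which is needed to identify the complete theory with $\operatorname{Th}(\mathfrak T_\theta^{n,2-qe})$. Your repair of the label of $p'$ in the splitting case also works, with one caveat: in a nonstandard $\mathfrak B$ the path $\mathfrak I(p',v')$ has no order type, so the label of $p'$ must be pinned down by Axiom~\ref{ax:2H-edge} applied to $P_{m-1,\sigma}(p',v')$ and $H_b(v')$, not by a length.
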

\begin{proof}
    The empty tuple in any model satisfies the same $(q+t)$-diagram for a sufficiently large padding $t$ (only the root with label $\mu_1=\theta$, no edges).  Lemma~\ref{lem:composition2} gives elementary equivalence up to any finite rank, hence full equivalence.
\end{proof}

\begin{theorem}[Quantifier elimination and decidability \MakeUppercase{\romannumeral 2}]\label{thm:2-qe}
    The theory $T_{n,\theta}^2$ is decidable and admits effective quantifier elimination.  Every $\mathcal L_n^\wedge\cup\{H_1,\dots,H_m\}$-formula $\varphi(\bar x)$ of quantifier rank $q$ is equivalent modulo $T_{n,\theta}^2$ to the disjunction 
    $$\bigvee_{\Delta\in\mathcal D_\varphi}\delta_\Delta(\bar x),$$
    where $\mathcal D_\varphi$ is the finite set of $(q+t)$ type-{\MakeUppercase{\romannumeral 2}}-$\theta$ diagrams consistent with $\varphi$, for a sufficiently large padding $t$.  The same holds for $\mathcal L_{n,\theta}^{2-qe}$-formulae after expanding the path predicates.
\end{theorem}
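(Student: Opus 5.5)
The plan follows the proof of Theorem~\ref{thm:1-qe}, replacing Lemma~\ref{lem:composition1} by Lemma~\ref{lem:composition2} and the type-I diagrams by the height-labelled type-II-$\theta$ diagrams.

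\emph{Step 1: every formula is a disjunction of diagrams.} Fix an $\mathcal L_n^\wedge\cup\{H_1,\dots,H_m\}$-formula $\varphi(\bar x)$ of quantifier rank $q$, and fix the padding $t$ supplied by Lemma~\ref{lem:composition2}. In any model of $T_{n,\theta}^2$, each tuple $\bar a$ satisfies exactly one formula $\delta_\Delta$ with $\Delta$ a level-$(q+t)$ type-II-$\theta$ diagram.
\begin{itemize}
\item The tree part and the meet terms are read off directly.
\item Axioms~\ref{ax:2H-partition} give each meet term a unique height label.
\item Axiom~\ref{ax:characteristic-exhaustion}, with the mutual exclusivity of inequivalent characteristics, gives each edge a unique characteristic.
\item Axiom~\ref{ax:2H-edge} shows that the labels and characteristics satisfy coherence conditions (i) and (ii).
\end{itemize}
By Lemma~\ref{lem:composition2}, two tuples realizing the same $\Delta$, possibly in different models, agree on $\varphi$. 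So each diagram that is realized in some model decides $\varphi$. Hence $\varphi$ is equivalent modulo $T_{n,\theta}^2$ to the disjunction of the realized diagrams consistent with $\varphi$. For an $\mathcal L_{n,\theta}^{2-qe}$-formula, first replace each $P_{q,\sigma}$ by $\varphi_\sigma$ using Axiom~\ref{ax:path-definitions}, then apply the same argument at the new rank.

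\emph{Step 2: effectivity.} The theory $T_{n,\theta}^2$ is recursively axiomatized.
\begin{itemize}
\item The sets $\Sigma_n^+(q)$ and their characteristic sentences are computable by Lemma~\ref{lem:path-calculus}(\ref{itm:path-effective}).
\item The sets $R_\theta$ and the defining formulas for the $H_i$ are computable from the Cantor normal form of $\theta$.
\item The schemes~\ref{ax:2H-edge} and~\ref{ax:2H-realizability} ask whether some $\lambda$ with $\lambda+\mu_j=\mu_i$ carries a realization of $\sigma$. Such $\lambda$ lie in a computable interval below $\omega^\omega$, so Theorem~\ref{lem:interval-realizabiliy} decides this and tells us which of the two alternatives to include as an axiom.
\end{itemize}
Theorem~\ref{thm:tree-completeness2} gives completeness, and consistency holds because $\mathfrak T_\theta^{n,2-qe}$ is a model. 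Running proof search on a sentence and its negation in parallel therefore decides $T_{n,\theta}^2$. Applying this decision procedure to the sentences $\exists\bar x\,(\delta_\Delta\wedge\varphi)$ computes $\mathcal D_\varphi$ from the finite list of diagrams, which gives effective quantifier elimination.

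\emph{Main obstacle.} The delicate point is making the padding $t$ explicit and uniform in $q$. Each $H_i$ is an abbreviation for an $\mathcal L_n^\wedge$-formula whose quantifier rank depends only on $\theta$. In the split case of Lemma~\ref{lem:composition2}, the new node $p'$ must receive the correct height label. I would handle this by taking $t=2+h_\theta$, where $h_\theta$ bounds the rank needed to express the labels on an interval. Then a rank-$(m-1)$ characteristic of $\mathfrak I(u,p)$ determines the residual-height relation $\operatorname{len}(u,p)+\mu_c=\mu_a$, and so fixes the label of $p'$. Once $t$ is computable from $\theta$, the rest of the argument is the bookkeeping of Theorem~\ref{thm:1-qe}.
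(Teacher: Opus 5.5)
Your proposal is correct and follows the paper's proof. Lemma~\ref{lem:composition2} makes each realized diagram decide $\varphi$, and the recursive axiomatization (via Lemma~\ref{lem:path-calculus} and Theorem~\ref{lem:interval-realizabiliy}) together with completeness (Theorem~\ref{thm:tree-completeness2}) yields decidability and the computability of $\mathcal D_\varphi$. Your explicit padding $t=2+h_\theta$ is a reasonable sharpening of the paper's unspecified ``sufficiently large padding'', provided you read the forcing of the label of $p'$ in an arbitrary model through Axiom~\ref{ax:2H-edge} rather than through literal lengths: for a fixed source label, the admissible lengths for different target labels form disjoint intervals below $\omega^\omega$, which are separated at a rank depending only on $\theta$.
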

\begin{proof}
    Since every tuple determines a unique diagram, and tuples with the same diagram agree on all rank‑$q$ formulae by Lemma~\ref{lem:composition2}, the equivalence follows. The axiomatization is recursive: the finite sets $R_\theta$ and $\Sigma_n^+(q)$ can be computed, the realizability conditions in Axioms~\ref{ax:2H-edge} and ~\ref{ax:2H-realizability} are decidable by Theorem~\ref{lem:interval-realizabiliy}, and the base theory $T_n^0$ is recursive. Completeness (Theorem~\ref{thm:tree-completeness2}) then makes the theory decidable, and the set $\mathcal D_\varphi$ computable by exhaustive search for consistency.
\end{proof}

The following classification result follows easily.
\begin{theorem}[Standard trees of type \MakeUppercase{\romannumeral 2}]\label{thm:2-classify}
    A standard tree $\mathfrak{T}_\alpha^n$ is elementarily equivalent to the canonical tree $\mathfrak{T}_{\theta}^n$ if and only if $\alpha=\theta$.
\end{theorem}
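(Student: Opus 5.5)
The right-to-left direction is trivial: if $\alpha=\theta$ then $\mathfrak T_\alpha^n$ and $\mathfrak T_\theta^n$ are literally the same structure. For the left-to-right direction I would argue exactly as in the first half of the proof of Theorem~\ref{thm:1-classify}: transfer elementary equivalence of the trees to elementary equivalence of the heights, then invoke the Mostowski--Tarski classification, Theorem~\ref{theorem:DMT}.

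The first step is to isolate a definable path. In $\mathcal L_{n,S}$ I would use the leftmost path
$$L(x):\ \forall z\,\forall w\,\bigl(z\subsetneq w\subseteq x\wedge \neg\exists v\,(z\subsetneq v\subsetneq w)\longrightarrow w=S_0(z)\bigr),$$
that is, every immediate-successor step below $x$ goes in direction $0$. In $\mathfrak T_\alpha^n$ this defines the constant-$0$ sequences $\{0^\beta:\beta<\alpha\}$. Limit stages cause no trouble: a node whose every successor step is $0$ is the constant-$0$ sequence, since the sequence is determined by its successor-step labels. Ordered by $\subseteq$, this set is isomorphic to $(\alpha,<)$.

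Relativizing a sentence of the language of linear orders to $L$ and replacing $<$ by $\subsetneq$ gives a uniform translation. Hence $\mathfrak T_\alpha^n\equiv\mathfrak T_\theta^n$ implies $(\alpha,<)\equiv(\theta,<)$.

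Since $\theta<\omega^\omega$, Theorem~\ref{theorem:DMT} says $(\alpha,<)\equiv(\theta,<)$ forces $\alpha=\theta$: an ordinal below $\omega^\omega$ is equivalent only to itself, because $\alpha\geq\omega^\omega$ and $\theta<\omega^\omega$ are never equivalent. More concretely, the sentence asserting that the order type is exactly $\theta$ is first-order; it can be built from the defining formula $\phi_\theta$ together with the statement that the whole order is order-isomorphic to the initial segment $\theta$. That finishes the argument.

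The only point needing care is that the leftmost path really is definable and is order-isomorphic to $\alpha$ even at limit levels. This is where the fullness of the standard tree and the uniqueness of nodes at limit heights, from Theorem~\ref{thm:standard-tree}, are used. No appeal to the quantifier elimination of Theorem~\ref{thm:2-qe} is needed. Alternatively, one could note that $\mathfrak T_\alpha^n\models T_{n,\theta}^2$ fails unless the residual height predicates behave correctly, but the path argument is cleaner.
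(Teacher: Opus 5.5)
Your proposal is correct and follows the paper's own argument: restrict to the definable leftmost path to get $(\alpha,<)\equiv(\theta,<)$, then apply the Mostowski--Tarski classification (Theorem~\ref{theorem:DMT}). Your version is slightly more careful than the paper's one-line proof, which says ``since $\alpha,\theta<\omega^\omega$'' without justifying $\alpha<\omega^\omega$; you correctly obtain this from the fact that no ordinal $\geq\omega^\omega$ is elementarily equivalent to one below $\omega^\omega$.
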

\begin{proof}
    This follows because the definable leftmost paths of the two trees are elementarily equivalent. Since $\alpha,\theta<\omega^\omega$, the Mostowski--Tarski classification gives $\alpha=\theta$.
\end{proof}

\subsection{Type \MakeUppercase{\romannumeral 3} and type \MakeUppercase{\romannumeral 4} canonical trees}\label{sec:type34syn}

Now we take a look at type \MakeUppercase{\romannumeral 3} and type \MakeUppercase{\romannumeral 4} canonical trees. We may consider \MakeUppercase{\romannumeral 3} at first, then \MakeUppercase{\romannumeral 4} will only need a slight modification.

As in the Type~\MakeUppercase{\romannumeral 2} case, retain the residual-height predicates $H_1,\dots,H_m$. Place every node of height below $\omega^\omega$ in $H_0$. Define $\operatorname{SL}(x)=x$ for $H_0$-nodes; otherwise, define $\operatorname{SL}(x)$ to be the least $H_1$-predecessor of $x$.
$$y=\operatorname{SL}(x)\Longleftrightarrow\bigl((H_0(x)\land y=x)\lor(\neg H_0(x)\land y\subseteq x\land H_1(y)\land\forall z\,(z\subsetneq y\to H_0(z)))\bigr).$$
Now let
$$\mathcal{L}_{n,\theta}^{3-qe}=\mathcal{L}_{n,\theta}^{2-qe}\cup\{H_0,\operatorname{SL}\},$$
The theory $T_{n,\theta}^3$ extends $T_n^0$ and the Type~\MakeUppercase{\romannumeral 2} edge and realizability schemes, with $i,j\in\{1,\dots,m\}$. It also includes the following axioms:

\begin{enumerate}[label=\textnormal{(C\arabic*)}]
    \item\label{ax:3H-partition}
        $$\forall x\Bigl(\bigvee_{i}H_i(x)\Bigr),\qquad \forall x(H_i(x)\to\neg H_j(x))(i\neq j).$$
    \item\label{ax:3H-root}
        $$H_0(\epsilon).$$
    \item\label{ax:3H-downward}
        \(H_0\) is downward closed:
        $$\forall x,y\,\bigl(y\subsetneq x \land H_0(x)\to H_0(y)\bigr).$$
    \item\label{ax:3H-low}
        The \(H_0\)-part satisfies the Type~\MakeUppercase{\romannumeral 1} extension axiom relativised to \(H_0\): for every \(q<\omega\) and every \(\sigma\in\Sigma_n^{+}(q)\),
        $$\forall x\,\bigl(H_0(x)\to\exists y\,(H_0(y)\land P_{q,\sigma}(x,y))\bigr).$$
    \item\label{ax:3H-SL-def}
        The $\operatorname{SL}$ function satisfies our formal definition above.
    \item\label{ax:3H-SL-realizability}
        For each \(q<\omega\) and \(\sigma\in\Sigma_n^{+}(q)\),
        \begin{itemize}
            \item if \(\sigma\) can be realized on a colored ordinal of order type \(\omega^\omega\), then
            $$\forall x\,\bigl(H_0(x)\to\exists z\,(H_1(z)\land x\subsetneq z\land P_{q,\sigma}(x,\operatorname{SL}(z)))\bigr);$$
            \item otherwise,
            $$\forall x,z\,\bigl(H_0(x)\land H_1(z)\land x\subsetneq z\to\neg P_{q,\sigma}(x,\operatorname{SL}(z))\bigr).$$
        \end{itemize}
\end{enumerate}

We note that $H_0,H_1,\dots,H_m$ and $\operatorname{SL}$ are also definable in $\mathcal L_n^\wedge$ in every model of $T_{n,\theta}^3$, not only in the standard tree. The height predicates are defined by the same finite iteration of limit-point conditions as in Type~\MakeUppercase{\romannumeral 2}, and then $\operatorname{SL}$ is defined by Axiom~\ref{ax:3H-SL-def}.

From our discussions in Section~\ref{sec:realizability}, the axiom schema~\ref{ax:3H-SL-realizability} is enumerable.

For a tuple $\bar x=(x_1,\dots,x_k)$ in a model of $T_{n,\theta}^3$, we form its \emph{special meet closure} $X$ as follows. Start with $X_0$, the ordinary meet closure of $\{\epsilon, x_1,\dots,x_k\}$. Then let $X$ be the closure of $X_0$ under the function $\operatorname{SL}$. Because $\operatorname{SL}(\operatorname{SL}(u))=\operatorname{SL}(u)$ for every $u$ (a consequence of the axioms on $\operatorname{SL}$), the set $X$ remains finite and consists of the terms built from the variables using $\wedge$, $\epsilon$ and $\operatorname{SL}$. Moreover, $X$ is still closed under meet, since the meet of any two such terms is again a term (the meet of $\operatorname{SL}(u)$ and $\operatorname{SL}(v)$ can be expressed using the meet of $u$ and $v$ and possibly $\operatorname{SL}$). We now define the appropriate diagrams for this type.

Fix $\theta<\omega^\omega$, its residual heights $R_\theta=\{\mu_1>\dots>\mu_m\}$ with $\mu_1=\theta$, and a natural number $N\ge 2$. A \emph{type~{\MakeUppercase{\romannumeral 3}}-$\theta$ level-$N$ diagram} $\Delta$ consists of the following data:
\begin{itemize}
    \item A finite meet tree $T$ whose nodes are the elements of the special meet closure of a distinguished tuple of \emph{generators} (corresponding to the variables). The tree is equipped with a labeling $\lambda: T\to\{0,1,\dots,m\}$ and a unary function $s: T\to T$.
    \item The labeling satisfies:
    \begin{enumerate}[label=(\roman*)]
        \item $\lambda(\epsilon)=0$;
        \item if $u\subsetneq v$, then $\lambda(u)\le\lambda(v)$.
    \end{enumerate}
    \item For every edge $(u,v)$ of $T$ (i.e. $u\subsetneq v$ and no node of $T$ lies strictly between them) a distinguished characteristic $\sigma_{u,v}\in\Sigma_n^{+}(N)$.
\end{itemize}
These data must obey the following coherence conditions:
\begin{enumerate}[label=(\roman*)]
    \item For any node $u$, the first colors of the characteristics on the edges emanating upward from $u$ are pairwise distinct.
    \item If $\lambda(u)=i$, $\lambda(v)=j$ with $i,j\ge 1$, then there exists an ordinal $\lambda$ such that $\lambda+\mu_j=\mu_i$ and $\sigma_{u,v}$ is realizable on a colored ordinal of length $\lambda$ (this is decidable by Theorem~\ref{lem:interval-realizabiliy}).
\end{enumerate}

Every such diagram $\Delta$ gives rise to a quantifier-free $\mathcal{L}_{n,\theta}^{3-qe}$-formula $\delta_\Delta(\bar x)$ in the variables that generate $T$. It is the conjunction of:
\begin{enumerate}[label=(\alph*)]
    \item all equalities and inequalities among the terms of the special meet closure generated by $\bar x$ (including equalities of the form $\operatorname{SL}(t)=t'$ that correspond to the function $s$ in the diagram);
    \item the order relations $u\subseteq v$ and $u\not\subseteq v$ between those terms;
    \item for each term $t$, the atom $H_{\lambda(t)}(t)$;
    \item for every edge $(u,v)$ of $T$, the atom $P_{N,\sigma_{u,v}}(u,v)$.
\end{enumerate}
By construction, for any tuple $\bar a$ in a model of $T_{n,\theta}^{3}$, its special meet closure together with the actual values of the $\operatorname{SL}$ function determines exactly one diagram $\Delta$, and the model satisfies $\delta_\Delta(\bar a)$. There are finitely many choices for the parameters involved. Thus, for fixed $k$ and $N$, there are only finitely many level-$N$ diagrams.

We now establish the composition lemma.

\begin{lemma}[Composition lemma \MakeUppercase{\romannumeral 3}]\label{lem:composition3}
    Let $\mathfrak A,\mathfrak B\models T_{n,\theta}^3$, let $\bar a\in A^k$ and $\bar b\in B^k$, and let $q\ge 0$. If $\bar a$ and $\bar b$ realize the same $(q+t)$ type-\MakeUppercase{\romannumeral 3}-$\theta$ diagram, where $t$ is a sufficiently large padding, then they satisfy the same $\mathcal L_n^\wedge\cup\{H_0,H_1,\dots,H_m,\operatorname{SL}\}$-formulae of quantifier rank at most $q$.
\end{lemma}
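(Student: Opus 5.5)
We will run the same Ehrenfeucht--Fra\"{\i}ss\'e argument as in Lemmas~\ref{lem:composition1} and~\ref{lem:composition2}, now on special meet closures. $\exists$ maintains finite special meet closures $X\subseteq A$ and $Y\subseteq B$, closed under $\wedge$, $\epsilon$ and $\operatorname{SL}$. She also maintains an isomorphism $h\colon X\to Y$ of labelled meet trees that commutes with $\operatorname{SL}$ and preserves the labels $H_0,\dots,H_m$. The invariant is that, with $r$ rounds remaining, corresponding edges carry the same rank-$(r+t)$ characteristic. The padding $t$ is fixed in advance so that it absorbs three costs: the rank lost when splitting an edge twice in one round (once at the meet point $p$, and possibly once more at a new $\operatorname{SL}$-node); the fixed rank of the $\mathcal L_n^\wedge$-definitions of the $H_i$ and $\operatorname{SL}$; and the rank $2$ needed to read off first colors and the relations $E_i$. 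The hypothesis gives the invariant at $r=q$. At the end, the restriction of $h$ to the played tuples is a partial isomorphism for $\epsilon$, $\wedge$, $\subseteq$, $E_i$, $H_j$ and $\operatorname{SL}$, exactly as in the earlier proofs.

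For a move $c\in A\setminus X$, put $p=\max_\subseteq\{c\wedge a:a\in X\}$ and split into cases according to the labels.

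\emph{Case $p\notin X$.} Find the edge $(u,v)$ of $X$ through $p$ and apply Lemma~\ref{lem:path-calculus}(\ref{itm:path-split}) to obtain $p'$ with matching left and right characteristics. The label of $p'$ is then forced: if $u,v$ both lie in $H_0$, or both lie in $\bigcup_{i\ge1}H_i$, the argument of Lemma~\ref{lem:composition2} applies. If $u\in H_0$ and $v\notin H_0$, then $\operatorname{SL}(v)\in X$ already, because $X$ is $\operatorname{SL}$-closed. Hence $\operatorname{SL}(v)$ lies between $u$ and $v$, so the edge $(u,v)$ has both ends on the same side of the $H_0$ boundary; this reduces to the previous subcases.

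\emph{Case $p\in X$.} Next add $c$ above $p$ (or above the new $p$ from the previous case).
\begin{itemize}
\item If $c\in H_0$, use Axiom~\ref{ax:3H-low} at $h(p)$ with the rank-$(r+t-1)$ characteristic of $(p,c)$.
\item If $p\notin H_0$, use the Type~\MakeUppercase{\romannumeral 2} schemes, as in Lemma~\ref{lem:composition2}.
\item If $p\in H_0$ and $c\notin H_0$, there is a new node $s=\operatorname{SL}(c)$ with $p\subsetneq s\subseteq c$, and both $s$ and $c$ must be matched.
\end{itemize}

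We expect this last subcase to be the main obstacle. The plan is as follows. Take $\sigma$ to be the characteristic of $\mathfrak I(p,s)$. This word has length $\omega^\omega$ in the standard model, and in general it is realizable on $\omega^\omega$ because of the second clause of Axiom~\ref{ax:3H-SL-realizability}. The first clause, applied at $h(p)$, then gives $z'$ with $H_1(z')$ and $P(h(p),\operatorname{SL}(z'))$ of characteristic $\sigma$; set $s'=\operatorname{SL}(z')$. From $s'$, which has label $H_1$, use the Type~\MakeUppercase{\romannumeral 2} realizability scheme to reach $c'$ with the label and characteristic of $(s,c)$. The first color of $\sigma$ guarantees that $s'$ lies in an unused cone, so the meets are correct.

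The delicate point is that Axiom~\ref{ax:3H-SL-realizability} only constrains $P_{q,\sigma}(x,\operatorname{SL}(z))$ for $x\in H_0$. To confirm that the characteristic of $(p,s)$ really is realizable on $\omega^\omega$, we will check that in any model of $T^3_{n,\theta}$ a non-realizable $\sigma$ is excluded by the ``otherwise'' clause, applied with the witness $z=c$. We will also check that the diagram coherence conditions make no demand on edges from label $0$ to label $1$, so nothing further needs to be matched. Finally, we verify that $\operatorname{SL}(c')=s'$ and that $h$ extended by $s\mapsto s'$ and $c\mapsto c'$ still commutes with $\operatorname{SL}$. This holds because every $H_0$-node lies below $s'$ and every node strictly between $s'$ and $c'$ has a positive label.
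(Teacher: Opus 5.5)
Your proposal follows the paper's proof. It runs an EF game on special meet closures with the edge-characteristic invariant and splits an edge by Lemma~\ref{lem:path-calculus}(\ref{itm:path-split}) when $p\notin X$; when $p\in X$ it extends into a fresh cone via Axiom~\ref{ax:3H-low}, or via Axiom~\ref{ax:3H-SL-realizability} to place $s=\operatorname{SL}(c)$ followed by Axiom~\ref{ax:2H-realizability} from $s'$, which makes precise the paper's brief appeal to those axioms in Case~1.

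Two local slips need repair. First, for a boundary-crossing edge $(u,v)$ with $u\in H_0$ and $v\notin H_0$, $\operatorname{SL}$-closure forces $v=\operatorname{SL}(v)$ rather than making the edge one-sided; but then $p$ and the split point $p'$, which lie strictly below $h(v)=\operatorname{SL}(h(v))$, are in $H_0$ automatically. Second, the ``otherwise'' clause of Axiom~\ref{ax:3H-SL-realizability} should be applied with $z=s$, which satisfies $H_1(s)$ and $\operatorname{SL}(s)=s$, not with $z=c$, since $c$ may carry a label $H_j$ with $j>1$.
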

\begin{proof}
    $\exists$ maintains a finite set $X\subseteq A$ and an isomorphic copy $Y\subseteq B$ together with an isomorphism $h: X\to Y$ of the structures induced by the language (including $\operatorname{SL}$, the order, the height predicates, and the edge characteristics). Initially $X$ and $Y$ are the special meet closures of $\bar a$ and $\bar b$, and $h$ is the map induced by the common diagram. With $r$ rounds remaining, the invariant demands that corresponding edges carry the same rank-$(r+t)$ characteristic.

    When $\forall$ plays a new element $c\in A$, let $p$ be the $\subseteq$-maximum of $\{c\wedge a:a\in X\}$, where $X$ is the current special meet closure. After choosing the response to $c$, extend both sides to their special meet closures. Two main cases arise.

    \textbf{Case 1: $p\in X$.} Let $i$ be the height label of $p$. The path from $p$ to $c$ starts with a color that, by maximality of $p$, differs from the first colors of all edges in $X$ going upward from $p$. The characteristic $\sigma$ of $I(p,c)$ at rank $r+t$ is determined. If $i=0$, then Axiom~\ref{ax:3H-low} (or \ref{ax:3H-SL-realizability} if the path crosses the special layer) guaranties the existence of a suitable $c'\in B$ with the same characteristic and the correct height labels. If $i\ge 1$, Axiom~\ref{ax:2H-realizability} provides a $c'$ in $B$ because the residual height constraints match. In either subcase, the $\operatorname{SL}$-images of the new nodes are computed according to the diagram and coincide with the values forced by the axioms.

    \textbf{Case 2: $p\notin X$.} Then $p$ lies strictly between two consecutive nodes $u,v$ of $X$ with $u\subsetneq v$. The path $I(u,v)$ splits at $p$. Since the edge characteristic of $(u,v)$ has rank $r+t$, Lemma~\ref{lem:path-calculus}(\ref{itm:path-split}) yields a splitting point $p'\in B$ with matching left and right characteristics, both of rank $r+t-1$. The height label of $p$ and its $\operatorname{SL}$-image must then be realized using the corresponding Type~III axioms. If $c\neq p$, apply the realizability argument of Case~1 above to the cone above $p$.

    In all situations, the extended map respects $\operatorname{SL}$, the height labels, and edge characteristics of rank $r+t-1=(r-1)+t$, so the induction continues with $r-1$ rounds remaining.
\end{proof}

With the composition lemma in hand, we obtain the expected metatheorems for Type~{\MakeUppercase{\romannumeral 3}}.

\begin{theorem}[Completeness \MakeUppercase{\romannumeral 3}]\label{thm:tree-completeness3}
    For every nonzero limit ordinal $\theta<\omega^\omega$, the theory $T_{n,\theta}^3$ is complete and coincides with the first‑order theory of the canonical structure $\mathfrak T_{\omega^\omega+\theta}^{n,3-qe}$. Hence its $\mathcal L_n^{\mathrm{rel}}$-reduct is $\operatorname{Th}(\mathfrak T_{\omega^\omega+\theta}^{n,\mathrm{rel}})$.
\end{theorem}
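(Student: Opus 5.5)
The plan mirrors the proofs of Theorems~\ref{thm:tree-completeness1} and~\ref{thm:tree-completeness2}. There are three steps.

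\textbf{Step 1: the canonical structure is a model.} We first verify that the expansion $\mathfrak T_{\omega^\omega+\theta}^{n,3-qe}$ satisfies $T_{n,\theta}^3$. The base axioms of $T_n^0$ hold as in Type~\MakeUppercase{\romannumeral 1}, and Axioms~\ref{ax:3H-partition}--\ref{ax:3H-downward} and~\ref{ax:3H-SL-def} hold by the intended interpretation of $H_0,\dots,H_m$ and $\operatorname{SL}$. The checks that need care are the following.
\begin{itemize}
\item Axiom~\ref{ax:3H-low}: for a node $x$ of height $\beta<\omega^\omega$ and any $\sigma$, take a representative of order type $<g_{\mathrm{ord}}(n,q)<\omega^\omega$ and append it above $x$. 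The result stays in $H_0$ because $\omega^\omega$ is additively indecomposable.
\item Axiom~\ref{ax:3H-SL-realizability}: the interval from $x$ to a node of height $\omega^\omega$ has order type $\omega^\omega$, since $\beta+\omega^\omega=\omega^\omega$. Conversely, any colouring of $\omega^\omega$ can be placed on such an interval, because $n^{<\alpha}$ is full.
\item The Type~\MakeUppercase{\romannumeral 2} edge and realizability schemes: these hold above level $\omega^\omega$, since the cone above an $H_1$-node is a copy of $\mathfrak T^n_\theta$.
\end{itemize}

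\textbf{Step 2: any two models are $q$-equivalent.} Let $\mathfrak A,\mathfrak B\models T_{n,\theta}^3$ and fix $q$. The special meet closure of the empty tuple is $\{\epsilon\}$. It is already closed under $\operatorname{SL}$, since $H_0(\epsilon)$ by Axiom~\ref{ax:3H-root} gives $\operatorname{SL}(\epsilon)=\epsilon$. So both empty tuples realize the same trivial $(q+t)$ type-\MakeUppercase{\romannumeral 3}-$\theta$ diagram: one node with label $0$ and no edges. Lemma~\ref{lem:composition3} then gives $\mathfrak A\equiv_q\mathfrak B$ in $\mathcal L_n^\wedge\cup\{H_0,\dots,H_m,\operatorname{SL}\}$ for every $q$.

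\textbf{Step 3: passing to $\mathcal L_{n,\theta}^{3-qe}$ and to the reducts.} Every $\mathcal L_{n,\theta}^{3-qe}$-sentence becomes an $\mathcal L_n^\wedge$-sentence of finite rank, modulo $T_{n,\theta}^3$. We expand each $P_{q,\sigma}$ via Axiom~\ref{ax:path-definitions}, and use that $H_i$ and $\operatorname{SL}$ are uniformly $\mathcal L_n^\wedge$-definable in all models (as remarked after the axioms). Hence $\mathfrak A\equiv\mathfrak B$, so $T_{n,\theta}^3$ is complete, and by Step~1 it equals $\operatorname{Th}(\mathfrak T_{\omega^\omega+\theta}^{n,3-qe})$. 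Since $\epsilon$ and $\wedge$ are definable from $\subseteq$, the $\mathcal L_n^{\mathrm{rel}}$-consequences are exactly $\operatorname{Th}(\mathfrak T_{\omega^\omega+\theta}^{n,\mathrm{rel}})$.

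\textbf{Main obstacle.} The delicate point is Step~1, specifically the verification of Axiom~\ref{ax:3H-SL-realizability} together with the uniform definability of $H_0$ used in Step~3. In the standard tree, "height $<\omega^\omega$" has to be captured by the finite iteration of limit-point conditions, using that $\theta<\omega^\omega$ and that $H_1$-nodes are exactly those of height $\omega^\omega$. I would first establish this directly on the leftmost-path copy of $\omega^\omega+\theta$, using Theorem~\ref{theorem:DMT}, and then transfer it along arbitrary paths by fullness.
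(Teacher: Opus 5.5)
Your proposal is correct and follows the paper's proof, which consists exactly of your Step~2: the empty tuple has the trivial one-node, label-$0$ diagram, so Lemma~\ref{lem:composition3} gives $\equiv_q$ for every $q$. Your Step~1 makes explicit the check that $\mathfrak T_{\omega^\omega+\theta}^{n,3-qe}\models T_{n,\theta}^3$, which the paper leaves implicit here and effectively carries out only in the ($\Leftarrow$) direction of Theorem~\ref{thm:3-classify} with $\alpha_0=\omega^\omega$.

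The uniform $\mathcal L_n^\wedge$-definability of $H_i$ and $\operatorname{SL}$ that you single out as the main obstacle is not needed. Lemma~\ref{lem:composition3} already covers the language with $H_0,\dots,H_m,\operatorname{SL}$, and only the $P_{q,\sigma}$ have to be unfolded via Axiom~\ref{ax:path-definitions}. Note also that $H_1$ is in general a band of heights, not just the level $\omega^\omega$; for example, when $\theta=\omega$ it contains every node of height $\geq\omega^\omega$.
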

\begin{proof}
    Let $\mathfrak A,\mathfrak B\models T_{n,\theta}^3$. For every $q$, their empty tuples have the same $(q+t)$-diagram for a sufficiently large padding $t$, consisting only of $\epsilon$ with label $0$ and no edges. Lemma~\ref{lem:composition3} gives $\mathfrak A\equiv_q\mathfrak B$ for every $q$, and hence $\mathfrak A\equiv\mathfrak B$.
\end{proof}

\begin{theorem}[Quantifier elimination and decidability \MakeUppercase{\romannumeral 3}]\label{thm:3-qe}
    The theory $T_{n,\theta}^3$ is decidable and admits effective quantifier elimination. Every $\mathcal L_n^\wedge\cup\{H_0,\dots,H_m,\operatorname{SL}\}$-formula $\varphi(\bar x)$ of quantifier rank $q$ is equivalent modulo $T_{n,\theta}^3$ to the disjunction
    $$\bigvee_{\Delta\in\mathcal D_\varphi}\delta_\Delta(\bar x),$$
    where $\mathcal D_\varphi$ is the finite set of $(q+t)$ type-\MakeUppercase{\romannumeral 3}-$\theta$ diagrams consistent with $\varphi$, for a sufficiently large padding $t$.
\end{theorem}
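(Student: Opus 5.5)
The plan follows the pattern of Theorems~\ref{thm:1-qe} and~\ref{thm:2-qe}, with Lemma~\ref{lem:composition3} doing the semantic work.

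\textbf{Quantifier elimination.} Fix an $\mathcal L_n^\wedge\cup\{H_0,\dots,H_m,\operatorname{SL}\}$-formula $\varphi(\bar x)$ of quantifier rank $q$, and fix the padding $t$ from Lemma~\ref{lem:composition3}. For a fixed tuple length $k$ and level $N=q+t$ there are only finitely many type-\MakeUppercase{\romannumeral 3}-$\theta$ diagrams. Each tuple $\bar a$ in any model of $T_{n,\theta}^3$ satisfies exactly one $\delta_\Delta$. Existence comes from Axiom~\ref{ax:characteristic-exhaustion}, the partition Axiom~\ref{ax:3H-partition}, and the definition of $\operatorname{SL}$ in Axiom~\ref{ax:3H-SL-def}. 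Uniqueness holds because distinct characteristics are logically inequivalent and the $H_i$ are pairwise exclusive.

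Lemma~\ref{lem:composition3} shows that two tuples, possibly in different models, that satisfy the same $\delta_\Delta$ agree on $\varphi$. So for each $\Delta$, either $T_{n,\theta}^3\vdash\forall\bar x(\delta_\Delta\to\varphi)$ or $T_{n,\theta}^3\vdash\forall\bar x(\delta_\Delta\to\neg\varphi)$. The first case applies exactly when $\delta_\Delta\wedge\varphi$ is consistent with the theory. Taking $\mathcal D_\varphi$ to be the set of these diagrams gives the displayed equivalence. For formulas that use the $P_{q,\sigma}$, first unfold them via Axiom~\ref{ax:path-definitions}; the $H_i$ and $\operatorname{SL}$ are $\mathcal L_n^\wedge$-definable in models of the theory, as noted after the axioms. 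This raises the rank to some $Q$, and one then uses $(Q+t)$-diagrams.

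\textbf{Effectivity and decidability.} I will check that $T_{n,\theta}^3$ is recursively axiomatized. The sets $R_\theta$ and $\Sigma_n^+(q)$, together with the addition table, are computable by Lemma~\ref{lem:path-calculus}(\ref{itm:path-effective}). The Type~\MakeUppercase{\romannumeral 2} edge and realizability schemes are decidable by Theorem~\ref{lem:interval-realizabiliy}.

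The main point needing care is the scheme~\ref{ax:3H-SL-realizability}. For each $\sigma$, we must decide whether $\sigma$ is realizable on order type exactly $\omega^\omega$. This is the MSO sentence $\Psi_\sigma(\omega^\omega)$, which is decidable by Theorem~\ref{cor:realizability}, or directly by Theorem~\ref{theorem:Decidability} since $\omega^\omega<\omega_2$. So each instance of the scheme can be effectively chosen in its positive or its negative form, and the axiom set is decidable rather than merely enumerable.

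The theory is complete by Theorem~\ref{thm:tree-completeness3} and consistent because $\mathfrak T^{n,3-qe}_{\omega^\omega+\theta}$ is a model. Hence parallel proof search for a sentence and its negation decides $T_{n,\theta}^3$. The same search decides, for each of the finitely many $\Delta$, whether $\exists\bar x(\delta_\Delta\wedge\varphi)$ is a theorem, and so computes $\mathcal D_\varphi$.

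\textbf{Main obstacle.} The one substantive dependency is that Lemma~\ref{lem:composition3} really holds with a padding $t$ that is uniform and computable from $\theta$. Effectivity of the normal form requires knowing $t$ explicitly. I would extract it from the proof as $2$ plus the quantifier rank of the defining formulas for $H_i$ and $\operatorname{SL}$. Those formulas are determined by the expanded Cantor normal form of $\theta$, so $t$ is computable from $\theta$.
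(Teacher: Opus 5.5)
Your proposal is correct and follows the paper's argument. Lemma~\ref{lem:composition3} makes each diagram decide $\varphi$, the axiom set is recursive, and completeness (Theorem~\ref{thm:tree-completeness3}) then gives both decidability and a search for $\mathcal D_\varphi$. Your appeal to Theorem~\ref{theorem:Decidability} for the $\omega^\omega$-realizability instances of Axiom~\ref{ax:3H-SL-realizability} is the more precise justification, since Theorem~\ref{lem:interval-realizabiliy}, which the proof cites, only covers order types below $\omega^\omega$.

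The ``main obstacle'' you raise is not actually needed. Because the theory is complete and decidable, you can try $t=0,1,2,\dots$ in turn, compute the candidate $\mathcal D_\varphi$, and decide the sentence $\forall\bar x\,\bigl(\varphi\leftrightarrow\bigvee_{\Delta\in\mathcal D_\varphi}\delta_\Delta\bigr)$, stopping at the first $t$ that works. No explicit bound on the padding is required.
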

\begin{proof}
    By Lemma~\ref{lem:composition3}, tuples with the same diagram satisfy the same rank-$q$ formulae. The axiomatization is recursively enumerable because the realizability conditions are decidable (Theorem~\ref{lem:interval-realizabiliy}), the set of characteristics is finite and computable, and the additional axioms for $H_0$ and $\operatorname{SL}$ are schematic but effectively presented. Completeness then guarantees decidability, and the set $\mathcal D_\varphi$ can be found by an exhaustive search for consistency.
\end{proof}

The following classification result follows from Theorem~\ref{lem:cofinal-stretching}.
\begin{theorem}[Standard trees of type \MakeUppercase{\romannumeral 3}]\label{thm:3-classify}
    A standard tree $\mathfrak{T}_\alpha^n$ is elementarily equivalent to the canonical tree $\mathfrak{T}_{\omega^\omega+\theta}^n$ if and only if $\alpha=\alpha_0+\theta$ where $\alpha_0\equiv\omega^\omega$ and $\cf(\alpha_0)=\omega$, and $\theta>0$.
\end{theorem}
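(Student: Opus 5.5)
We prove the two implications separately, following the pattern of Theorems~\ref{thm:1-classify} and~\ref{thm:2-classify}. Throughout, $\theta$ in the canonical tree is the fixed nonzero limit ordinal $<\omega^\omega$.

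\emph{Necessity.} Assume $\mathfrak T_\alpha^n\equiv\mathfrak T_{\omega^\omega+\theta}^n$. The leftmost path is definable in $\mathcal L_{n,S}$ and is isomorphic to the height ordinal. Relativizing order sentences to it gives $(\alpha,<)\equiv(\omega^\omega+\theta,<)$. By Theorem~\ref{theorem:DMT}, $\alpha\geq\omega^\omega$ and the part of the Cantor normal form of $\alpha$ below $\omega^\omega$ is $\theta$. So we may write $\alpha=\alpha_0+\theta$ with $\alpha_0\equiv\omega^\omega$ and $\theta>0$.

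It remains to show $\cf(\alpha_0)=\omega$, and I expect this to need a genuine tree sentence, since the linear order alone cannot see cofinality. Let $H_1$ be the residual-height predicate from Type~\MakeUppercase{\romannumeral 2}. It is definable by a fixed formula, because $\theta<\omega^\omega$. In $\mathfrak T^n_{\omega^\omega+\theta}$, take $z$ with $H_1(z)$ and $\operatorname{SL}(z)=z$. Choose a coloring of $\omega^\omega$, as in the example at the end of Section~\ref{sec:realizability}, whose $4$-characteristic $\sigma$ forces countable cofinality. Then the sentence
$$\exists z\,\bigl(H_1(z)\wedge\operatorname{SL}(z)=z\wedge\varphi_\sigma(\epsilon,z)\bigr)$$
holds there. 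This sentence transfers to $\mathfrak T_\alpha^n$. In $\mathfrak T_\alpha^n$ the node $\operatorname{SL}(z)$ has height exactly $\alpha_0$, so $\sigma$ would be realized on $\alpha_0$. This forces $\cf(\alpha_0)=\omega$. (Alternatively, if $\cf(\alpha_0)>\omega$, one could use Lemma~\ref{lem:absorb-large}(2) to land in Type~\MakeUppercase{\romannumeral 4} and appeal to Theorem~\ref{theorem:inequivalent}. I prefer the direct sentence, to avoid circularity.)

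\emph{Sufficiency.} Assume $\alpha=\alpha_0+\theta$ with $\alpha_0\equiv\omega^\omega$, $\cf(\alpha_0)=\omega$ and $0<\theta<\omega^\omega$. Expand $\mathfrak T_\alpha^n$ to $\mathcal L^{3-qe}_{n,\theta}$, interpreting $H_0$ as ``height $<\alpha_0$'', $H_i$ by residual height $\mu_i$, and $\operatorname{SL}$ as in the definition. By completeness (Theorem~\ref{thm:tree-completeness3}), it suffices to check that this expansion satisfies $T^3_{n,\theta}$. We verify the axioms in turn.
\begin{enumerate}
\item The base axioms $T_n^0$ and Axioms~\ref{ax:3H-partition}--\ref{ax:3H-downward} and~\ref{ax:3H-SL-def} are immediate.
\item For Axiom~\ref{ax:3H-low}, take $x$ of height $\beta<\alpha_0$. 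Then $\beta+\gamma<\alpha_0$ for every $\gamma<\omega^\omega$, as in the proof of Theorem~\ref{thm:1-classify}. So every $\sigma$ is realized by a path of length $\leq g_{\mathrm{ord}}(n,q)$ above $x$ that stays inside $H_0$.
\item The Type~\MakeUppercase{\romannumeral 2} schemes above level $\alpha_0$ hold because the cone above an $H_1$-node is a copy of $\mathfrak T_\theta^n$.
\end{enumerate}

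The main obstacle is Axiom~\ref{ax:3H-SL-realizability}. Take $x$ of height $\beta<\alpha_0$. Since $\alpha_0$ is additively indecomposable modulo the tail, the order type of $[\beta,\alpha_0)$ is an ordinal $\gamma$ with $\beta+\gamma=\alpha_0$. This $\gamma$ satisfies $\gamma\equiv\omega^\omega$ and $\cf(\gamma)=\omega$. By Lemma~\ref{lem:absorb-large}(1), $\sigma$ is realizable on $\gamma$ if and only if it is realizable on $\omega^\omega$.

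\begin{itemize}
\item[(a)] If $\sigma$ is realizable on $\omega^\omega$, choose a coloring of $\gamma$ realizing $\sigma$. Extend $x$ along it to a node $w$ of height $\alpha_0$, then extend $w$ to some $z$ with $H_1(z)$. Then $\operatorname{SL}(z)=w$ and $P_{q,\sigma}(x,w)$ holds.
\item[(b)] Conversely, any witness $z$ yields a coloring of $\gamma$ with characteristic $\sigma$. By Lemma~\ref{lem:absorb-large}(1), $\sigma$ is then realizable on $\omega^\omega$, which gives the negative clause.
\end{itemize}

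Hence the expansion is a model of $T^3_{n,\theta}$, and taking the $\mathcal L_{n,S}$-reducts yields $\mathfrak T_\alpha^n\equiv\mathfrak T^n_{\omega^\omega+\theta}$.
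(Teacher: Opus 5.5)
Your proposal is correct and follows the paper's proof essentially step for step. For necessity, you use the definable leftmost path with Theorem~\ref{theorem:DMT} to get $\alpha=\alpha_0+\theta$, then a sentence asserting that a special-layer node carries the countable-cofinality-forcing transition word to get $\cf(\alpha_0)=\omega$; for sufficiency, you expand $\mathfrak T_\alpha^n$ to $\mathcal L^{3-qe}_{n,\theta}$, verify the axioms and invoke Theorem~\ref{thm:tree-completeness3}. Your checks of Axiom~\ref{ax:3H-low} (via $\beta+\gamma<\alpha_0$ for $\gamma<\omega^\omega$) and of both clauses of Axiom~\ref{ax:3H-SL-realizability} (via Lemma~\ref{lem:absorb-large}(1) applied to the tail $\gamma$ with $\beta+\gamma=\alpha_0$) are in fact tighter than the paper's looser appeals to cofinal stretching.
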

\begin{proof}
    We prove the two implications separately.

    ($\Rightarrow$) Assume $\mathfrak{T}_\alpha^n\equiv\mathfrak{T}_{\omega^\omega+\theta}^n$.  The left‑most path is definable in $\mathcal{L}_{n,S}$ and, when ordered by $\subseteq$, is isomorphic to $(\alpha,<)$; similarly for $\omega^\omega+\theta$. Hence $(\alpha,<)\equiv(\omega^\omega+\theta,<)$. By the Mostowski–Tarski analysis of the first‑order theory of well‑orders, every ordinal elementarily equivalent to $\omega^\omega+\theta$ can be uniquely written as $\alpha_0+\theta$, where $\alpha_0\equiv\omega^\omega$.

    It remains to prove that $\cf(\alpha_0)=\omega$. Choose a node at the special layer whose transition word contains an unbounded discrete set of $C_0$-positions, with a least $C_0$-position and an immediate $C_0$-predecessor for every other $C_0$-position. The corresponding first-order tree sentence holds in $\mathfrak T_{\omega^\omega+\theta}^n$ and therefore in $\mathfrak T_\alpha^n$. Its realization yields a countable cofinal sequence in $\alpha_0$, so $\cf(\alpha_0)=\omega$.

    ($\Leftarrow$) Suppose $\alpha=\alpha_0+\theta$, $\alpha_0\equiv\omega^\omega$ and $\cf(\alpha_0)=\omega$. We expand $\mathfrak{T}_\alpha^n$ to the language $\mathcal{L}_{n,\theta}^{3-qe}$ by interpreting the new predicates as follows:
    \begin{itemize}
        \item $H_0(x)$ iff $\operatorname{ht}(x)<\alpha_0$;
        \item $H_i(x)$ ($i=1,\dots,m$) iff $\operatorname{ht}(x)+\mu_i=\alpha$ (where $\mu_i\in R_\theta$ are the residual heights of $\theta$);
        \item $\operatorname{SL}(x)$ is the unique node on the path from $\epsilon$ to $x$ of height $\alpha_0$ if $\operatorname{ht}(x)\ge\alpha_0$, and $\operatorname{SL}(x)=x$ otherwise.
    \end{itemize}
    We claim that this expansion satisfies all axioms of $T_{n,\theta}^3$.

    Axioms~\ref{ax:3H-partition}--\ref{ax:3H-downward} are immediate from the definition. For Axiom~\ref{ax:3H-low}, fix $q<\omega$, $\sigma\in\Sigma_n^{+}(q)$, and $x$ with $H_0(x)$.  Because $\alpha_0\equiv\omega^\omega$ and $\cf(\alpha_0)=\omega$, the ordinal $\alpha_0$ is $q$-universal $\omega$-cofinal (Lemma~\ref{lem:universal-large}). In particular, the tail of the tree above $x$ inside the $H_0$‑part has height $\alpha_0-\operatorname{ht}(x)$, which is again an ordinal $\equiv\omega^\omega$ with cofinality $\omega$.  By the cofinal stretching argument (Theorem~\ref{lem:cofinal-stretching}), every $q$-characteristic $\sigma$ that is realizable on a finitely colored ordinal (all of them are realizable on some ordinal $<\omega^\omega$) occurs as the characteristic of a path $I(x,y)$ with $H_0(y)$. Hence the relativised Type~{\MakeUppercase{\romannumeral 1}} axiom holds.

    Axiom~\ref{ax:3H-SL-def} holds by construction. For Axiom~\ref{ax:3H-SL-realizability}, fix $q<\omega$ and $\sigma\in\Sigma_n^+(q)$.
    \begin{itemize}
        \item If $\sigma$ is realizable on a colored ordinal of order type $\omega^\omega$, then by the cofinal stretching theorem applied to the initial segment $\alpha_0$, there exists for every $x$ in $H_0$ a node $z$ of height $\ge\alpha_0$ such that the path from $x$ to $\operatorname{SL}(z)$ (which lies exactly at height $\alpha_0$) has characteristic $\sigma$.  Indeed, $\alpha_0$ can be written as $\sum_{\xi<\omega}\beta_\xi$ where each $\beta_\xi\equiv\omega^\omega$; therefore, the interval between any $x$ and the special layer contains arbitrarily long blocks that are $q$-universal $\omega$-cofinal, allowing us to realize $\sigma$.
        \item If $\sigma$ is not realizable on $\omega^\omega$, then no path from a node in $H_0$ to the special layer can have characteristic $\sigma$, because the length of such a path would be an ordinal $<\alpha_0$ that is elementarily equivalent to an ordinal $<\omega^\omega$, and the first‑order properties of the path length would contradict the realizability of $\sigma$ on $\omega^\omega$.
    \end{itemize}
    Thus the axiom scheme is satisfied.

    The tail above height $\alpha_0$ has order type exactly $\theta$, and the residual height predicates $H_i$ are interpreted exactly as in the canonical tree $\mathfrak{T}_{\omega^\omega+\theta}^{n,3-qe}$.  Consequently, Axioms~\ref{ax:2H-edge} and~\ref{ax:2H-realizability} (restricted to $i,j\ge1$) hold verbatim, because the colored ordinals that can appear inside the $\theta$-tail are precisely those realizable on ordinals $<\theta$, and the height constraints match.

    Therefore the expanded structure $\mathfrak{T}_\alpha^{n,3-qe}$ is a model of $T_{n,\theta}^3$. By Theorem~\ref{thm:tree-completeness3}, the theory $T_{n,\theta}^3$ is complete, so
    $$\mathfrak{T}_\alpha^{n,3-qe}\equiv\mathfrak{T}_{\omega^\omega+\theta}^{n,3-qe}.$$
    Reducing to the common language $\mathcal{L}_{n,S}$ yields $\mathfrak{T}_\alpha^n\equiv\mathfrak{T}_{\omega^\omega+\theta}^n$.
\end{proof}

The language $\mathcal{L}_{n,\theta}^{4-qe}$ is exactly the same as $\mathcal{L}_{n,\theta}^{3-qe}$.  Define $T_{n,\theta}^4$ by replacing realizability on $\omega^\omega$ in Axiom~\ref{ax:3H-SL-realizability} with realizability on $\omega_1$. The corresponding Type~\MakeUppercase{\romannumeral 4} classification uses $\alpha_0\equiv\omega^\omega$ and $\cf(\alpha_0)>\omega$.  Completeness \MakeUppercase{\romannumeral 4} and Quantifier elimination and decidability \MakeUppercase{\romannumeral 4} are both proved in the same way.

\begin{theorem}\label{theorem:inequivalent}
  These four kinds of trees are pairwise elementarily inequivalent
\end{theorem}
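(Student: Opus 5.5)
The plan is to separate the four canonical types by first-order sentences, working first in the language of linear orders via the definable leftmost path and then, for the last case, with a tree sentence that detects cofinality. In $\mathcal L_{n,S}$ the leftmost path $\{x:\forall y\subsetneq x\ (\text{$y$'s successor below $x$ is } S_0(y))\}$ is definable, and with $\subseteq$ it is isomorphic to $(\alpha,<)$. Every linear-order sentence therefore transfers to the tree by relativization, so elementarily equivalent standard trees have elementarily equivalent heights. By Theorem~\ref{theorem:DMT}, $\omega^\omega$, each $\theta<\omega^\omega$, and $\omega^\omega+\theta$ with $\theta>0$ are pairwise inequivalent as orders. The same holds for $\omega_1+\theta$ with $\theta>0$, since $\omega_1\equiv\omega^\omega$. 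This separates Type~I from Types~II, III and IV; it separates Type~II from all others, since a Type~II height is below $\omega^\omega$ and the others are not; and it separates different tails $\theta$ within Types~II, III and IV.

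The remaining case is the one I expect to be the main obstacle: distinguishing $\mathfrak T^n_{\omega^\omega+\theta}$ from $\mathfrak T^n_{\omega_1+\theta}$ when the tail $\theta$ is the same. Here the heights are elementarily equivalent as orders, so the difference must come from the branching. I would use the $2$-colored ordinal $\mathfrak A=(\omega^\omega,<,C_0,C_1)$ with $C_0=\{\omega^k:k<\omega\}$, discussed after Theorem~\ref{cor:realizability}. Let $\tau$ be its $4$-characteristic, viewed as an $n$-colored characteristic with colors $0$ and $1$ and the others empty. By Theorem~\ref{lem:cofinal-stretching} and the remark following Theorem~\ref{cor:realizability}, $\tau$ is realizable on $\omega^\omega$ but on no ordinal of uncountable cofinality. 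In particular it is not realizable on $\omega_1$.

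I would then write a tree sentence $\chi$. It says that there is a node $z$ lying at the special layer, that is, the least node whose height is not below the definable $\omega^\omega$-part, with $\mathfrak I(\epsilon,z)\models\tau$. The special layer is definable because $\theta$ is definable (Theorem~\ref{theorem:DMT}): the layer consists of the nodes $z$ whose residual height is exactly $\theta$ and whose predecessors do not have this property. In $\mathfrak T^n_{\omega^\omega+\theta}$, the node $z$ given by the coloring $\mathfrak A$ followed by arbitrary entries witnesses $\chi$. In $\mathfrak T^n_{\omega_1+\theta}$, any such $z$ would give a coloring of $\omega_1$ satisfying $\tau$, which is impossible. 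Hence $\chi$ separates Types~III and~IV. Equivalently, this is the difference between Axiom~\ref{ax:3H-SL-realizability} in $T^3_{n,\theta}$ and in $T^4_{n,\theta}$ applied at $x=\epsilon$.

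The care needed is in checking that ``residual height equals $\theta$'' is genuinely first-order, which it is, because the interval above $z$ along the leftmost continuation has order type $\theta$ and $\theta$ is definable. One also needs $\tau$ to be sufficiently rich, for instance rank $4$ plus some padding, so that it forces countable cofinality of the length. That is exactly the content of the cited remark: an unbounded $C_0$ with a least element and immediate $C_0$-predecessors yields an $\omega$-sequence cofinal in the length.
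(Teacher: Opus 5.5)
Your proof is correct. For the one delicate pair, Type~III-$\theta$ versus Type~IV-$\theta$, your argument is exactly the paper's. A special-layer node whose transition word has an unbounded set of $C_0$-positions, with a least element and immediate $C_0$-predecessors, exists over $\omega^\omega$, but over $\omega_1$ it would force $\cf(\omega_1)=\omega$.

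The difference is in the other separations. The paper separates Type~II from the rest via Lemma~\ref{lem:marker-hierarchy}: there is a node whose transition word has a characteristic forcing order type at least $\omega^{|\Sigma_{n-1}(q)|}>\theta$. It separates Type~I from Types~III and~IV by the existence of a node of residual height $\theta$, and reserves Theorem~\ref{theorem:DMT} for distinguishing tails. You route all of these cases through the definable leftmost path, which reduces them to a single application of Theorem~\ref{theorem:DMT} to the heights. This is the same device the paper itself uses in Theorems~\ref{thm:1-classify}, \ref{thm:2-classify} and~\ref{thm:3-classify}.

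Your version is more elementary. It also makes explicit that III versus IV is the only pair whose heights are elementarily equivalent, so it is the only place where branching information is genuinely needed. The paper's marker argument instead places the Type~II distinction in transition-word characteristics, i.e.\ inside the diagram language of its quantifier elimination, at the price of heavier machinery.

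Two small corrections. First, the witness in $\mathfrak T^n_{\omega^\omega+\theta}$ must be exactly the node of height $\omega^\omega$ coded by $\mathfrak A$; appending further entries moves it off the special layer. Second, Theorem~\ref{lem:cofinal-stretching} is not needed, since $\mathfrak A$ itself witnesses realizability on $\omega^\omega$. In fact the rank-$3$ sentence expressing the three properties of $C_0$ can replace $\tau$. The argument then applies directly to every $\mathfrak T^n_{\alpha_0+\theta}$: when $\cf(\alpha_0)=\omega$, color a cofinal $\omega$-sequence of $\alpha_0$ with color $0$.
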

\begin{proof}

  The reason is longer paths can code more information. Different values of $\theta$ are distinguished by the Mostowski--Tarski theorem ~\ref{theorem:DMT}.

  For a fixed $\theta$.

  (2) is not elementarily equivalent to (1), (3), or (4): choose $q$ sufficiently large that $\theta<\omega^{|\Sigma_{n-1}(q)|}$. By Lemma~\ref{lem:marker-hierarchy}, each of the latter three trees contains a node whose transition word has a characteristic order type at least $\omega^{|\Sigma_{n-1}(q)|}$.

  (1) is not elementarily equivalent to (3) or (4): in the latter trees there is a node of residual height $\theta$.

  (3) is not elementarily equivalent to (4). At the special layer of (3), one can choose a transition word such that its $C_0$-positions are cofinal and have a least element, with every other $C_0$-position having an immediate $C_0$-predecessor. This property forces the initial block to have countable cofinality.

\end{proof}

\subsection{The common first order theory of FnBTs}

As we have shown before, the first order theory of every standard tree is decidable. A natural question is to ask what is the common first-order theory $T_n$ of all standard trees. Since ultraproducts may contain nonstandard models, obviously the class of all standard trees is not an elementary class. However, we may wish a weaker result, once $q<\omega$ is fixed,  every model in the elementary class generated from the common theory is $q$-elementarily equivalent to some standard tree ; in fact, a similar relation between definably well-founded trees (which form an elementary class) and well-founded trees (which do not) is discussed in \cite{Doets1987}.

Directly, we may wish to take the intersection of the reduct of $T_n^1$ and all $T_{n,\theta}^{i}(i=2,3,4;\theta<\omega^\omega)$ in the language $\mathcal{L}^\wedge$. However, even though we know that they are all decidable theories, or that they are all $\Delta_1$, their countable intersection is just a $\Pi_1$ set. If we can describe the common theory $T_n$, then it would be computably enumerable, or that it would be $\Sigma_1$, then it is automatically decidable since $\Pi_1\cap\Sigma_1=\Delta_1$.

However, we can omit the hard work of writing down tons of axioms for $T_n$; instead, we notice that there are only finitely many sentences (up to equivalence) with quantifier rank $q$ in the language $\mathcal{L}^\wedge$, so there are only finitely many boolean combinations of all of them.
For each $q$, let $\chi_{q,1},\dots,\chi_{q,N_q}$ be the complete rank-$q$ theories realized by standard trees. The disjunction $\bigvee_{i\leq N_q}\chi_{q,i}$ holds in every standard tree, and every model satisfying it is $q$-elementarily equivalent to a standard tree. Taking these sentences for all $q$ gives an axiomatization of the intersection of the canonical theories.

Immediately $T_n$ is satisfied by every standard tree and is computably enumerable, and we have our claimed relation between models of $T_n$ and standard trees as above: $q$-equivalence comes from the fact that they choose the same disjunct. However, this relation shows that $T_n$ is actually the intersection of all canonical theories: every sentence that is consistent with $T_n$ will be satisfied by some standard tree. And our work is done.

\section{FnBTs and MSO structures over ordinals}

In the previous sections we obtained a complete classification of the elementary theories of the full $n$-branching ordinal trees $\mathfrak{T}_\alpha^n$ and established effective quantifier elimination and decidability for all canonical representatives.  In this final section we situate these structures within the broader logical landscape by relating them to monadic second‑order logic over ordinals and by settling the question of the definability of height inside the trees.

A node of $\mathfrak{T}_\alpha^n$ is a function $s:\beta\to n$ for some $\beta<\alpha$.  Such a function is naturally coded by an $n$‑tuple of pairwise disjoint subsets of $\beta$ whose union is $\beta$. Concretely, for $i<n$ put $X_i = \{\gamma<\beta\mid s(\gamma)=i\}.$ Then $(X_0,\dots,X_{n-1})$ is an $n$‑tuple of monadic predicates on $(\beta,<)$ that partitions an initial segment of $\alpha$.  Conversely, every such $n$‑tuple that partitions an initial segment of $\alpha$ arises from a unique node.

The tree order $\subseteq$ corresponds to the initial segment extension of the domains, which is easily expressed by monadic formulas. The successor functions $S_i$ correspond to adjoining one new ordinal at the end of the domain and placing it into $X_i$, again a monadically definable operation.  Hence there is a uniform interpretation of the $\mathcal{L}_{n,S}$-structure $\mathfrak{T}_\alpha^n$ in the MSO structure $(\alpha,<)$. In particular, the first‑order theory of $\mathfrak{T}_\alpha^n$ is many‑one reducible to the MSO theory of $(\alpha,<)$.

For the reverse interpretation we enrich the tree with a binary relation $x\sim y$ intended to mean ``$x$ and $y$ have the same height''.  Work inside $\mathfrak{T}_{\alpha+1}^n$; its nodes of height exactly $\alpha$ are precisely the functions $\alpha\to n$.  Fix $n\ge 2$ and let the \emph{zero path} be the constant‑$0$ node $\mathbf{0}_\alpha$.  Every subset $A\subseteq\alpha$ can be encoded by the node $c_A$ defined by
\[
c_A(\gamma) = \begin{cases}
1 & \text{if }\gamma\in A,\\
0 & \text{if }\gamma\notin A .
\end{cases}
\]
Thus the family $\{x\mid x\sim\mathbf{0}_\alpha\}$ is in bijection with the power set of $\alpha$.  The order $<$ on $\alpha$ can be recovered inside the tree by using the fact that for $\beta<\gamma<\alpha$, the initial segment of $c_A$ of length $\beta$ and the corresponding segment for the set $\{\gamma\}$ determine the order relation.

\begin{theorem}
    For every ordinal $\alpha$, every $\mathcal L_{n,S}$-formula over $\mathfrak T_\alpha^n$ can be translated into an MSO formula over $(\alpha,<)$. Conversely, every MSO formula over $(\alpha,<)$ can be translated into an $\mathcal L_{n,S}\cup\{\sim\}$-formula over $(\mathfrak T_{\alpha+1}^n,\sim)$.
\end{theorem}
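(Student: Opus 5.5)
For the first direction I would give an explicit interpretation of $\mathfrak T_\alpha^n$ in the MSO structure $(\alpha,<)$, using the coding described just before the statement. A node $s\colon\beta\to n$ is represented by an $n$-tuple of set variables $(X_0,\dots,X_{n-1})$. The domain formula $\mathrm{Node}(\bar X)$ will state three things: the $X_i$ are pairwise disjoint; their union $D=\bigcup_i X_i$ is downward closed, i.e.\ $\forall y\forall z\,(z<y\wedge y\in D\to z\in D)$; and $D$ is bounded, i.e.\ $\exists z\,\neg(z\in D)$, so that $D=\beta$ for some $\beta<\alpha$. Equality of nodes is coordinatewise set equality. The order $\bar X\subseteq\bar Y$ will say that $\bigcup X_i$ is contained in $\bigcup Y_i$ and that $X_i=Y_i\cap\bigcup_j X_j$ for each $i$. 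The graph $E_i(\bar X,\bar Y)$ will say that $\bigcup Y_j=\bigcup X_j\cup\{z\}$ where $z$ is the least element outside $\bigcup X_j$, that $z\in Y_i$, and that $\bar Y$ agrees with $\bar X$ below $z$. We also need the new domain $D\cup\{z\}$ to be bounded; that is, $z+1<\alpha$, which always holds because $\alpha$ is a limit ordinal. The translation is then the usual relativization: each first-order quantifier over nodes becomes a block of $n$ monadic quantifiers guarded by $\mathrm{Node}$, each function term $S_i(t)$ is unnested into its graph $E_i$, and each atom is replaced by its defining formula. Correctness follows by induction on formulas, using that $\bar X\mapsto$ the node it codes is a bijection between $\mathrm{Node}$-tuples modulo equality and $n^{<\alpha}$.

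For the converse, I would interpret $(\alpha,<)$ with its monadic part inside $(\mathfrak T_{\alpha+1}^n,\sim)$. I would do this through a two-sorted translation in which ordinals are coded by nodes of height $<\alpha$ and sets are coded by nodes of height $\alpha$.

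First, the nodes of height $\alpha$ are definable. These are exactly the maximal nodes, which can be written as $\neg\exists y\,(x\subsetneq y)$; the relation $\sim$ is not needed for this. The zero path $\mathbf 0_\alpha$ is then the maximal node $x$ such that for every $y\subsetneq x$, $E_0(y,z)$ holds for some $z\subseteq x$. Set variables $A$ range over maximal nodes $x_A$; since $n\geq 2$ allows the labels $2,\dots,n-1$, I would either restrict to nodes whose colors are only $0,1$, or simply read "$\gamma\in A$" as "color $\neq0$", which is harmless.

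Next, an ordinal $\gamma<\alpha$ is coded by the node $0^\gamma$, the initial segment of $\mathbf 0_\alpha$ of length $\gamma$. First-order ordinal variables therefore range over $\{y: y\subsetneq\mathbf 0_\alpha\}$. The order $<$ becomes $\subsetneq$. Membership $\gamma\in A$ becomes "the unique $u\subseteq x_A$ with $u\sim y$ satisfies $\exists w\,(E_1(u,w)\wedge w\subseteq x_A)$". The relation $\sim$ is used exactly here, to transfer the height $\gamma$ from the zero path to the path $x_A$. The membership atom then expresses $C_1(u;x_A)$, in the notation of the path predicates. Relativizing every MSO quantifier gives the translation, and its correctness is again proved by induction on formulas.

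The main obstacle is the bookkeeping in the membership clause. One must check that, inside $x_A$, the node at height $\gamma$ is unique and that its successor label records $c_A(\gamma)$. This is immediate from Theorem~\ref{thm:standard-tree} and the fact that $\sim$ is the equal-height relation. The other point to watch is the boundedness clause in the first interpretation, which ensures that full subsets of $\alpha$ are never mistaken for nodes.
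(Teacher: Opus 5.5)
Your proposal is correct and follows the paper's route. The partition code gives the first interpretation; for the converse, sets are coded by height-$\alpha$ nodes $c_A$, $\sim$ is used to compare heights, and the translation is finished by induction on formulas. The only deviations are that you code an ordinal $\gamma$ by the initial segment $0^\gamma$ of $\mathbf 0_\alpha$ rather than by the maximal node $c_{\{\gamma\}}$, which turns $<$ directly into $\subsetneq$, and that your remark about the labels $2,\dots,n-1$ handles a point the paper glosses over: for $n>2$, the map $A\mapsto c_A$ does not reach every node of height $\alpha$.
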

\begin{proof}
    For the first translation, use the partition code above; the tree order and the functions $S_i$ are MSO-definable on these codes. Conversely, code $A\subseteq\alpha$ by $c_A$ and $\beta<\alpha$ by $c_{\{\beta\}}$. On these codes, the relations $\beta<\gamma$ and $\beta\in A$ are first-order definable from $\subseteq$, $S_0$, $S_1$, and $\sim$. Induction on formulae gives both translations.
\end{proof}

 In contrast to the interpretation of the previous subsection, this one is not uniform in the ordinal, as it requires a shift by $+1$ to obtain the full power set; but for fixed $\alpha$ it gives a precise calibration of the expressive power of the tree with equal height.

The interpretations above make essential use of the equal‑height relation. We now show that this relation is \emph{not} definable in the pure first‑order language of $\mathfrak{T}_\alpha^n$.

\begin{theorem}[Undefinability in theories]\label{thm:height-not-definable-theory}
    Let $\Sigma$ be the elementary theory of any standard tree $\mathfrak{T}_\alpha^n$, and let $\varphi(x,y)$ be an $\mathcal{L}_{n,S}$-formula.  Then $\varphi$ does not define the relation $H(x,y)$ which holds iff $\operatorname{ht}(x)=\operatorname{ht}(y)$ in all models of $\Sigma$.
\end{theorem}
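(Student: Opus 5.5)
We argue by contradiction and show that a defining formula for equal height would allow a first-order sentence of the tree language to encode the full monadic second-order theory of the height ordinal, which is incompatible with the quantifier elimination results. The cleanest way to exploit this is through the composition lemmas. Suppose $\varphi(x,y)$ defines $H$ in every model of $\Sigma$, and let $q$ be its quantifier rank. By the classification theorem, $\Sigma$ is one of $T_n^1$, $T_{n,\theta}^2$, $T_{n,\theta}^3$, $T_{n,\theta}^4$ (restricted to $\mathcal L_{n,S}$). By the corresponding quantifier elimination theorem (Theorem~\ref{thm:1-qe}, \ref{thm:2-qe}, or \ref{thm:3-qe}), $\varphi(x,y)$ is equivalent modulo $\Sigma$ to a disjunction of diagram formulae $\delta_\Delta(x,y)$ of level $q+t$. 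Hence it suffices to find two pairs $(a,b)$ and $(a',b')$ in the standard tree itself with the same level-$(q+t)$ diagram, such that $\operatorname{ht}(a)=\operatorname{ht}(b)$ but $\operatorname{ht}(a')\neq\operatorname{ht}(b')$.

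For the construction, a two-element tuple $x,y$ with $x,y$ incomparable has a meet closure consisting of $\epsilon$, $p=x\wedge y$, $x$, and $y$ (plus $\operatorname{SL}$-images in Types~III/IV). The diagram records only the height labels and the rank-$(q+t)$ characteristics of the edges $(\epsilon,p)$, $(p,x)$, $(p,y)$, and any special-layer edges. I would fix $p=\epsilon$ (or any convenient node of label $0$ or $1$). Next I would choose a colored word $w$ realizing a rank-$(q+t)$ characteristic $\sigma$ on some length $\lambda$, and a second word $w'$ with the same characteristic on a different length $\lambda'\neq\lambda$, both compatible with the same height labels. Then take $a=S_0{}^\frown w$-type node and $b=S_1{}^\frown w$ with matching remaining letters, and $a'$, $b'$ using $w$ and $w'$ respectively. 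The existence of two distinct lengths carrying the same characteristic follows from Lemma~\ref{lem:cutting} / the cutting arguments: for instance, the characteristic of a word of finite length $L\ge 2^{q+t}$ with a constant color is shared by length $L+1$. This gives finite $\lambda\neq\lambda'$, so the construction already works in Type~II (when $\theta\ge\omega$, finite heights all have label $1$), and similarly inside the $H_0$ part for Types~I, III, and IV.

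The first-color coherence conditions hold because the two branches start with different colors $0$ and $1$. The height labels are identical: in Types I, III, and IV both endpoints lie in $H_0$, and in Type~II both have the label of finite nodes. The $\operatorname{SL}$-closure adds nothing, since $\operatorname{SL}$ is the identity on $H_0$. Thus $\delta_\Delta(a,b)\leftrightarrow\delta_\Delta(a',b')$, so $\varphi(a,b)\leftrightarrow\varphi(a',b')$, contradicting $\operatorname{ht}(a)=\operatorname{ht}(b)\neq\operatorname{ht}(b')=\operatorname{ht}(a')$.

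The main obstacle I expect is ensuring that the padding $t$ in the quantifier elimination for the relevant type is a fixed finite number, so that ``level $q+t$'' is a genuine finite rank. It is also necessary to verify that the finite-length stretching claim (the characteristic of a monochromatic word of length $L$ is unchanged at $L+1$ for large $L$) holds at rank $q+t$. The latter is the standard EF fact that finite linear orders of lengths at least $2^{r}-1$ are $r$-equivalent, complementing Lemma~\ref{lem:small-chain}, and I would verify it directly by the usual halving strategy.
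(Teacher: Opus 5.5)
Your proposal is correct and follows essentially the paper's route: via the diagram quantifier elimination and composition lemmas, you reduce the problem to exhibiting pairs $(u,t)$ and $(u,v)$ with the same diagram but $\operatorname{ht}(u)=\operatorname{ht}(t)\neq\operatorname{ht}(v)$. Your concrete choice of sibling cones at the root, filled with finite monochromatic words of lengths $L$ and $L+1$ where $L\geq 2^{r}-1$, is exactly the construction the paper gives in its proof of the subsequent theorem for standard structures; such nodes lie in $H_0$ (or carry label $1$ in Type~II), so neither the height labels nor $\operatorname{SL}$ can separate the two pairs.
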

\begin{proof}
  Every standard tree is elementarily equivalent to one of the canonical trees analysed in Section~\ref{sec:type1syn}--\ref{sec:type34syn}, and for each canonical tree we have established quantifier elimination down to a suitable diagram language (Theorems~\ref{thm:1-qe}, \ref{thm:2-qe}, \ref{thm:3-qe}).
  For every finite quantifier rank $q$, there exist nodes $u,v,t$ such that $\operatorname{ht}(u)=\operatorname{ht}(t)\neq\operatorname{ht}(v)$ while the pairs $(u,t)$ and $(u,v)$ realize the same rank-$(q+2)$ diagram.

\end{proof}

\begin{theorem}[Undefinability in standard structures]\label{thm:height-not-definable-structure}
    For every $\alpha\ge\omega$, the relation $H(x,y)$ which holds iff $\operatorname{ht}(x)=\operatorname{ht}(y)$ is not definable by a first‑order formula in the structure $\mathfrak{T}_\alpha^n$.
\end{theorem}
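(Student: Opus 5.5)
Suppose, for contradiction, that $\varphi(x,y)$ is an $\mathcal L_{n,S}$-formula of quantifier rank $q$ defining equal height in $\mathfrak T_\alpha^n$. I would expand $\mathfrak T_\alpha^n$ to the appropriate quantifier-elimination language, choosing the canonical type from the main classification theorem: Type~I if $\alpha\equiv\omega^\omega$, Type~II if $\alpha<\omega^\omega$ (and $\alpha\ge\omega$), Types~III/IV otherwise. The key point is that the composition lemmas (Lemmas~\ref{lem:composition1}, \ref{lem:composition2}, \ref{lem:composition3}) only need the two pairs to lie in models of the relevant theory, and $\mathfrak T_\alpha^{n}$ with its standard expansion is such a model. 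Hence it suffices to find, inside $\mathfrak T_\alpha^n$ itself, nodes $u,t,v$ with $\operatorname{ht}(u)=\operatorname{ht}(t)\ne\operatorname{ht}(v)$ such that $(u,t)$ and $(u,v)$ realize the same $(q+t_0)$-diagram, where $t_0$ is the padding. Then $\varphi(u,t)\leftrightarrow\varphi(u,v)$, which contradicts definability.

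For the construction, take the shape in which $u,t,v$ all branch at the root. Let $u$ go through $S_0$ and $t,v$ through $S_1$. The meet closure of each pair is then $\{\epsilon,u,\,\cdot\,\}$, with two edges from $\epsilon$. The diagram therefore records only the rank-$N$ characteristics of the words $\mathfrak I(\epsilon,u)$, $\mathfrak I(\epsilon,t)$, $\mathfrak I(\epsilon,v)$, together with height labels and $\operatorname{SL}$-data where present. So we need two words $w_t,w_v$ starting with color $1$, of different lengths $\beta\ne\beta'$, with $w_t\equiv_N w_v$ and with the same residual-height label. In Type~II we also need the lengths to lie in the same $H_i$-region.

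The length-independence comes from the small-length pigeonhole. Pick $\beta\ge\omega$ with $\beta<\alpha$, staying inside a single height region. Color positions of $\beta$ so that the word is $1$ followed by all $0$s; the word on $1+\omega\cdot k$ is then $\equiv_N$-equivalent to the word on $1+\omega\cdot k'$ for large $k\ne k'$ (standard: $\omega\cdot k\equiv_N\omega\cdot k'$ for $k,k'\ge 2^N$). More simply, use $\omega^j\cdot k$ versus $\omega^j\cdot k'$ placed at a level where both heights stay in the same $H_i$-block. This works because residual heights are tail sums of the Cantor normal form, so adding $\omega^j$ blocks below the last term does not change the label; the same holds for $\operatorname{SL}$, which must coincide (both $\epsilon$ or both the special-layer node). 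I also let $u=t$ as nodes' heights while $u$ uses a different first color. In Type~II with $\alpha=\omega$, take $k,k'$ finite with $k,k'>2^N$. Here all nodes are labelled $H_1$, and finite words $10^{k}$ and $10^{k'}$ are $N$-equivalent.

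The main obstacle is matching the auxiliary labels $H_i$ and $\operatorname{SL}$ in Types~II--IV. For instance, when $\alpha=\omega\cdot 2$, the lengths must be taken either both below $\omega$ or both in $[\omega,\omega\cdot2)$. I will handle this by always varying only the finite coefficient of the lowest-order term of the height below the next region boundary. A similar case check is needed to ensure the diagram's padding rank $N$ is absorbed by taking $k,k'\ge2^N$.
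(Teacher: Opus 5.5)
Your proposal is correct and is essentially the paper's argument: put $u$ in one immediate-successor cone of the root and $t,v$ in another, give the transition words to $t$ and $v$ different lengths but the same rank-$N$ characteristic, and apply the relevant composition lemma. The only difference is that the paper uses finite heights $2^{q+2}-1\le\beta<\gamma<\omega$ in every case, and this removes what you call the main obstacle, because all finite heights carry the same auxiliary labels in every type: residual height $\theta$ in Type~II, and $H_0$ with $\operatorname{SL}(x)=x$ (not $\epsilon$) in Types~III/IV. So your transfinite choices $\omega^j\cdot k$ and the label case analysis are unnecessary.
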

\begin{proof}

  Fix a quantifier rank $q$. Choose finite ordinals $2^{q+2}-1\leq\beta<\gamma<\omega$. There are colored words of lengths $\beta$ and $\gamma$ with the same rank-$(q+2)$ characteristic. Choose nodes $u,t$ of height $\beta$ and a node $v$ of height $\gamma$ so that $u$ lies in one immediate-successor cone and $t,v$ lie in another, with the transition words to $t$ and $v$ having that common characteristic. The pairs $(u,t)$ and $(u,v)$ then realize the same rank-$(q+2)$ diagram and satisfy the same formulas of rank at most $q$, although $\operatorname{ht}(u)=\operatorname{ht}(t)\neq\operatorname{ht}(v)$. Therefore no first-order formula defines equality of height.

\end{proof}

Together, these results clarify the tight connection between full $n$-branching ordinal trees and monadic second‑order logic, while confirming that the natural metric structure given by height is inherently second‑order in character.

\printbibliography

\end{document}